    

\documentclass[reqno,twoside,11pt]{amsart}

\usepackage{amsmath,amsfonts,calrsfs,fullpage,amssymb,xcolor,verbatim,eucal,yfonts,mathrsfs}

\footskip=2\normalbaselineskip

\pagestyle{plain}

\setlength{\textheight}{8.5in}
\setlength{\oddsidemargin}{15pt}
\setlength{\evensidemargin}{\oddsidemargin} \flushbottom
\setlength{\textwidth}{6.3in}

\usepackage{mathtools}
\mathtoolsset{showonlyrefs}
\usepackage{url} 

\usepackage{bbm}

\newtheorem{Theorem}{Theorem}[section]

\newtheorem{Proposition}[Theorem]{Proposition}
\newtheorem{Lemma}[Theorem]{Lemma}
\newtheorem{lemma}[Theorem]{Lemma}

\newtheorem{Remark}[Theorem]{Remark}

\newtheorem{Hypothesis}{Hypothesis}

\makeatletter
\@addtoreset{equation}{section}

\makeatother

\setcounter{tocdepth}{3}

\def\a{\alpha}
\def\le{\left}
\def\r{\right}

\def\ds{\displaystyle}

\def\e{\epsilon}

\newcommand\norm[2]{\vert #1 \vert_{#2}}
\newcommand\norma[3]{\vert #2 \vert_{#1}}
\newcommand\normb[2]{\vert #2 \vert_{#1}} 
\newcommand\Normb[2]{\Vert #2 \Vert_{#1}} 
\newcommand{\bis}{{\prime\prime}}

\renewcommand{\d}{d}

\newcommand{\newu}{\mathrm{u}}

\newcommand{\newv}{\mathrm{v}} 

\newcommand\scalar[3]{\langle #2,#3\rangle_{#1} }
\newcommand\scalarb[3]{\bigl\langle #2,#3\bigr\rangle_{#1} }
\newcommand\WP{w^Q}

\newcommand{\lb}{\langle}
\newcommand{\rb}{\rangle}
\newcommand{\tr}{\mathrm{tr}}

\newcommand{\hs}{\hspace{-2truecm}}
\newcommand{\hsl}{\hspace{-1truecm}}
\newcommand{\hsp}{\hspace{2truecm}}
\newcommand{\hslp}{\hspace{1truecm}}
\newcommand{\hsllp}{\hspace{0.5truecm}}


\newcommand{\newspace}{\mathscr{L}(E,D(A_0))}
\newcommand{\newspaceH}{\mathscr{L}(E,H)}

\newcommand{\newM}{M}
\newcommand{\newsigma}{{ \sigma}}%

\newcommand\adda[1]{{\color{blue} #1}}
\newcommand\coma[1]{{\color{red} {#1}}}

\newcommand\dela[1]{}

\newcommand{\Fb}{\mathbf{F}}
\newcommand{\zb}{\mathbf{z}}

\newcommand{\tolong}{\longrightarrow}

\usepackage{todonotes}

\newcommand{\todozb}[1]
{\todo[color=teal!60,inline]{#1}}

\title{\bf Stochastic wave equations with constraints: well-posedness and Smoluchowski-Kramers diffusion approximation}\thanks{There are no data associated with this paper}\date{}

\author[Z. Brze{\'z}niak]{Zdzis{\l}aw Brze{\'z}niak}
\address{Department of Mathematics\\
University of York,}
\email{zdzislaw.brzezniak@york.ac.uk}

\author[S. Cerrai]{Sandra Cerrai}
\address{Department of Mathematics\\
University of Maryland\\
}
\email{cerrai@umd.edu}
\thanks{S. Cerrai was partially supported by the NSF grant  DMS-1954299 - {\em Multiscale analysis of infinite-dimensional stochastic systems}}

\subjclass[2010]{}

\keywords{}

\begin{document}
\maketitle

\begin{abstract}

We investigate the well-posedness of a class of stochastic second-order in time damped evolution equations in Hilbert spaces, subject to the constraint that the solution lies within the unitary sphere. Then, we focus on a specific example, the stochastic damped wave equation in a bounded domain of a $d$-dimensional Euclidean space, endowed with the Dirichlet boundary condition, with the added constraint that the $L^2$-norm of the solution is equal to one. We introduce a small mass $\mu>0$ in front of the second-order derivative in time and examine the validity of a Smoluchowski-Kramers diffusion approximation. We demonstrate that, in the small mass limit, the solution converges to the solution of a stochastic parabolic equation subject to the same constraint. We further show that an extra noise-induced drift emerges, which  in fact does not account for the Stratonovich-to-It\^{o} correction term.

\end{abstract}

\section{Introduction}\label{sec-intro}

The objective of this paper is twofold. Firstly, we aim to establish the existence and uniqueness of global solutions to stochastic second-order in time damped evolution equations in Hilbert spaces, while imposing the constraint that the norm of the solution is equal to one.  Secondly, we focus on a specific case of such equations, namely the stochastic damped wave equation in a bounded domain of a $d$-dimensional Euclidean space, subject to the Dirichlet boundary condition, with the  constraint that the $L^2$-norm of the solution is equal to one.
 In this case, we introduce an additional parameter $\mu$, called mass, to such equation and aim to prove that the solution $u_\mu$ converges to the solution of a certain stochastic heat equation, with Dirichlet boundary conditions,  satisfying the constraint that the $L^2$-norm of the solution is equal to one, as well. Unlike in  all the examples studied in the existing literature, the limiting equation we obtain may not be of the Stratonovich form. However,  one can give an independent proof of the existence and uniqueness of solutions to such limiting problem by employing  methods similar to those used recently in  \cite{Brz+Huss_2020}.

The present paper is the first one to consider the problem of the well-posedness for the following class of evolution equations in a separable Hilbert space $H$
	\begin{equation}\label{eq-2.26-intro}
	 u_{tt}(t)+  \norma{H}{ u_t(t)}{H}^2u(t)=-A_0^2u(t)+\norma{H}{ A_0u(t)}{H}^2-\gamma u_t(t) +\sigma(u(t),u_t(t))\,\circ\, dW(t),
	\end{equation}
	subject to the finite-codimension constraint of living on $M=S_H(0,1)$, the unitary sphere of $H$, with the initial data    $(u_0,v_0)$ in $TM$,  the tangent bundle of $M$.
	Here $A_0$ is a non-negative self-adjoint operator on $H$ with domain $D(A_0)$, $\gamma$ is a positive constant, $W(t)$ is a cylindrical Wiener process with  reproducing kernel Hilbert space (RKHS) $K$ having gamma-radonifying embedding in some Banach space $E$, and $\sigma$ is a locally Lipschitz mapping defined on $\mathscr{H} := D(A_0)\times H$ with values in $\mathscr{L}(E,H)$, such that $\sigma(u,v)$  projects $E$  onto $T_uM$. Due to its Stratonovich formulation, when written in It\^o form equation \eqref{eq-2.26-intro}
contains a non-trivial Stratonovich-to-It\^o trace term.

It is worth noticing that equation \eqref{eq-2.26-intro} is a stochastic version of the constrained deterministic  equation
\[u_{tt}(t)+  \norma{H}{ u_t(t)}{H}^2u(t)=-A_0^2u(t)+\norma{H}{ A_0u(t)}{H}^2-\gamma u_t(t),\]
where  the terms $\norma{H}{ u_t(t)}{H}^2u(t)$ and $\norma{H}{ A_0u(t)}{H}^2$ are added to $u_{tt}(t)$ and $-A_0^2u(t)$, respectively, in order to ensure that the solution stays on the manifold $M$. Actually, if one adds to the deterministic equation above any stochastic perturbation such as
\[\sigma(u(t),u_t(t))\,\circ\, dW(t),\]
under the assumption that for every $(u,v) \in\,\mathscr{H}$ the mapping $\sigma(u,v)$  projects $E$ into the tangent space $T_uM$, due to the presence of the Stratonovich integral the invariance property holds also for the stochastic equation \eqref{eq-2.26-intro}.

It is important to note that the model presented in equation \eqref{eq-2.26-intro} differs from the recent study of stochastic geometric wave equations by the first-named author and coauthors in \cite{Brz+GNR_2022}. In those works, the solution is constrained to a manifold within the Euclidean space (e.g., the sphere in $\mathbb{R}^3$), whereas here, the solution of \eqref{eq-2.26-intro} is restricted to a functional manifold - the set of all square-integrable functions with $L^2$-norm one. Although both classes of SPDEs exhibit nonlinearities with cubic growth, their nature differs significantly: in the former case, the nonlinearities are local, while in the latter, they are non-local. In particular, this fundamental distinction necessitates different mathematical techniques for their analysis.

However, the investigation of deterministic and stochastic constrained partial differential equations (PDEs) is not a new field of study. In this regard, we would like to mention the papers \cite{Caff+Lin_2009} by Caffarelli and Lin, and \cite{Rybka_2006} by Rybka, where deterministic heat flows in Hilbert manifolds were explored. The motivation behind the former paper was to find a gradient flow approach to a specific minimization problem.
A similar inquiry was undertaken for the stochastic 2-D Navier-Stokes equation by the first named author and Dhariwal in \cite{Brz+Dhariwal_2021}. This work was preceded by the paper \cite{Brz+DM_2018} by these two authors and Mariani, as well as the paper \cite{CPR_2009} by Caglioti, Pulvirenti, and Rousset, whose motivation  was the occurrence of different dissipation timescales. 
Both in \cite{Brz+DM_2018} and in \cite{CPR_2009}, the study focused on the deterministic 2-D Navier-Stokes equations (NSEs).
As well known, the $L^2$-norm of the solutions $u^\nu$ to such equations, with viscosity $\nu >0$, converges   to $0$, as $t\to \infty$,
while the  $L^2$-norm of the (strong) solution $u$ of the limiting Euler equation, which corresponds to  $\nu=0$, remains constant. Thus, it was proposed in \cite{CPR_2009} to consider a modification of the Navier-Stokes equations in which the  $L^2$-norm of  the solution  remains constant in time as well.
Finally, we would like to mention the recent work of Hairer and Rosati \cite{hr}, which examines the projected process of vector-valued linear SPDEs. This process corresponds to the angular component of the solution constrained to the unit sphere in $L^2$, and the study investigates its ergodic behavior.

A physical motivation for our model arises from the so-called relativistic limit of the Klein-Gordon equation; see, for example, \cite{Cirincione+Chernoff_1981} and \cite{Machihara_2002} for mathematical treatments, and \cite{Zee_2010} and \cite{Unknown_2024} for physical discussions. It follows from these and many related papers that the solutions $u^c$
  to the relativistic Klein-Gordon equation - with $c$
 denoting the speed of light - converge, in an appropriate sense, to solutions of the Schr\"odinger equation. Since the $L^2$-norm of Schr\"odinger solutions is conserved, it is natural to consider a modification of the Klein-Gordon equation that also preserves the 
$L^2$-norm of its solutions, in the hope that such modified equations will provide a better approximation to the limiting Schr\"odinger dynamics, at least over intermediate time scales.

\medskip

The main result concerning the existence and uniqueness of solutions for equation \eqref{eq-2.26-intro} is formulated in Theorem \ref{theorem 2.5}, and an extension of that result to the case of more regular initial data is presented in Theorem \ref{thm-2.7-regular data}. Both theorems, whose proofs  are presented in Section \ref{sec-proof of existence}, show that the stochastic constrained {\em wave} equation \eqref{eq-2.26-intro} admits a unique global solution living in the tangent bundle $TM$. Notice that the coefficients in equation \eqref{eq-2.26-intro} are only locally Lipschitz and have cubic growth. This means that the global well-posedness of the equation cannot be proven directly.  Namely, we first consider the equation in its mild formulation and prove that there exists a local maximal solution that is defined up to a certain stopping time $\tau$. Next, we prove that such solution $z=(u,v)$ stays on the tangent bundle $TM$. Finally we prove suitable a-priori bounds for the solution that allow to show that the solution is global and unique.

\smallskip

In the second part of the paper, the well posedness result for the abstract stochastic equation \eqref{eq-2.26-intro}  is applied to the specific case of a stochastic  damped wave equation in a bounded domain $D\subset \mathbb{R}^d$, endowed with the Dirichlet boundary condition and constrained to live in $M$, the unitary sphere in $H:=L^2(D)$. Namely, we consider the equation
\begin{equation}\label{eq1-infin-intro}
\left\{
\begin{array}{l}
\displaystyle{\mu\, \partial_t^2u_\mu(t,\xi)+\mu\normb{H}{ \partial_t u_\mu(t)}^2u_\mu(t,\xi)}\\[10pt]
\displaystyle{\hspace{+1.5truecm}=\Delta u_\mu(t,\xi)+\normb{H}{ \nabla u_\mu(t)}^2u_\mu(t,\xi)-\gamma\, \partial_t u_\mu(t,\xi)+\sigma(u_\mu(t))\,\partial_t \WP(t,\xi)}\\[10pt]
\displaystyle{u_\mu(0,\xi)=u_0(\xi),\ \ \ \  \partial_t u_\mu(0,\xi)=v_0(\xi),\ \ \ \ \ \ \   u_\mu(t,\xi)=0,\ \ \xi \in\,\partial D,}\end{array}
\right.\end{equation}
depending on a positive parameter $\mu$, where $\gamma$ is a positive constant, $\WP$ is a  Wiener process on $H$, with reproducing kernel Hilbert space $K$ and covariance operator $Q$, and the mapping $\sigma:H^1_0(D)\to\mathscr{L}(H)$ is such that  $\sigma(u)$ projects $H$ onto the tangent space $T_uM$.
 Notice that, since  here the diffusion coefficient depends on the unknown position $u_\mu$ and not on the velocity $\partial_t u_\mu$,  the Stratonovich trace term is equal to zero and the Stratonovich and the It\^o formulations coincide.

 As we mentioned above, our aim is studying the limiting behavior of the solution $u_\mu$ of equation  \eqref{eq1-infin-intro}, when the mass $\mu$ goes to zero.
 Namely, we fix an arbitrary condition $(u_0,v_0)$ that is sufficiently smooth and lives in the tangent bundle of $M$   and we show that $u_\mu$ converges in probability, in a suitable functional space, to
the unique solution of the equation
\begin{equation}
\label{limit-eq-intro}
\left\{\begin{array}{l}
\ds{\gamma\,\partial_t u(t,\xi)=\Delta u(t,\xi)+ \norm{\nabla u(t)}{H}^2 u(t,\xi)-\frac 12 \Vert \sigma(u(t))\Vert_{\mathscr{T}_2(K,H)}^2 u(t) +\sigma(u(t))\partial_t \WP(t,\xi),}\\[18pt]
\ds{u(0,\xi)=u_0(\xi),\ \ \ \ u(t,\xi)=0,\ \ \ \xi \in\,\partial D.	}
\end{array}\right.
	\end{equation}
	In particular, this means that in the diffusion-approximation limit   the term
	\[\mu\normb{H}{ \partial_t u_\mu(t)}^2u_\mu(t)\]
	converges, as $\mu\to 0$, to the non-trivial term
	\begin{equation} \label{fine100}-\frac 12 \Vert \sigma(u(t))\Vert_{\mathscr{T}_2(K,H)}^2 u(t),\end{equation}
	which depends on the diffusion coefficient $\sigma$ through its $\mathscr{T}_2(K,H)$ norm, where $\mathscr{T}_2(K,H)$ denotes the space of Hilbert-Schmidt operators from $K$ to $H$ (see Section 2 for all details).
	It is important to stress that the coefficient \eqref{fine100} does not coincides with the Stratonovich-to-It\^o correction term. Moreover, as we will show later with a concrete example, the solution to equation \eqref{limit-eq-intro} does  not coincide with the solution of the constrained  parabolic equation perturbed by Stratonovich-type noise
	\begin{equation}
\label{limit-eq-intro-other}
\left\{\begin{array}{l}
\ds{\gamma\,\partial_t u(t,\xi)=\Delta u(t,\xi)+ \norm{\nabla u(t)}{H}^2 u(t,\xi) +\sigma(u(t))\circ\partial_t \WP(t,\xi),}\\[18pt]
\ds{u(0,\xi)=u_0(\xi),\ \ \ \ u(t,\xi)=0,\ \ \ \xi \in\,\partial D,}
\end{array}\right.
	\end{equation}
which is the only example of constrained stochastic heat equation considered in the existing literature so far. In particular, our limiting result provides a new example of a constrained stochastic parabolic problem, which arises in a concrete situation such as  the small mass limit for equation \eqref{eq1-infin-intro}.

	 While this paper is the first one handling the case of SPDEs with constraints, a series of papers have investigated the validity of the so-called Smoluchowski-Kramers approximation, that describes the limiting behavior of the solution $u_\mu$, as the  mass $\mu$  vanishes.
For the finite dimensional case, the existing literature is quite broad and  we refer in particular to \cite{f}, \cite{fh}, \cite{hhv}, \cite{hmdvw} and  \cite{spi} (see also \cite{CF3}, \cite{CWZ} and \cite{lee} for systems subject to a magnetic field and \cite{hu} and \cite{Nguyen} for some related multiscaling problems).
We should also mention here that a simple model of the Smoluchwski-Kramers phenomenon  for stochastic SDEs has been investigated by Nelson in Chapters 9 and 10 of his famous book \cite{Nelson_1967}.
In recent years there has been an intense activity dealing with the Smoluchowski-Kramers diffusion approximation of   infinite dimensional systems. To this purpose, we refer to \cite{CF1}, \cite{CF2}, \cite{salins} and \cite{Lv2} for the  case of constant damping term (see also \cite{CS3} where systems subject to a magnetic field are studied), and to \cite{CX} and \cite{CXIE} for the case of  state-dependent damping. As a matter of fact, these two situations are quite different, as in the case of non-constant friction  a noise-induced term emerges from the small mass limit.

The study of the Smoluchowski-Kramers approximation does not  reduce only to the proof of the limit of the solutions $u_\mu$.
Actually, it is crucial to ascertain the stability of such an approximation in relation to other significant asymptotic characteristics exhibited by the two systems, such as their long-term behaviors, for instance.   To this purpose,  in \cite{CGH} and \cite{CF1} it is shown that  the statistically invariant states of the stochastic damped wave equation (in case of constant friction) converge in a suitable sense to the invariant measure of the limiting  equation. In the same spirit, the papers  \cite{sal}, \cite{sal2} and \cite{CXIE} are devoted to the analysis of the interplay between the small mass and the small noise limit. In particular, \cite{CXIE} studies the validity of a large deviation principle for the trajectories of the solution, while \cite{sal} and \cite{sal2} deal with the study of the convergence of the quasi-potential, that describes, as known, the asymptotics of the exit times and the large deviation principle for the invariant measure.

The current paper is the first one addressing the small mass limit for constrained infinite-dimensional systems. To the best of our knowledge, the only other paper in the existing literature that investigates this particular problem is \cite{bhw}, which focuses    on general manifolds in the finite-dimensional case. As mentioned in the introduction of \cite{bhw},   "Brownian motion of micro and nanoparticles occurring in complex environments can often be represented as two-dimensional or one-dimensional manifolds embedded within a three-dimensional space. For
example, the motion of proteins on cellular membranes occurs effectively on two-dimensional manifolds". Thus, the physical motivation of \cite{bhw} was to present Brownian motion
on a manifold as the zero-mass limit of an inertial system.
In  the present paper, we will be able to address an analogous problem in the case of space-dependent systems.

The transition from a finite number of degrees of freedom to an infinite number presents considerable challenges and complexities. The strategy we follow in our proof is somehow standard: we first prove suitable uniform bounds  with respect to $\mu \in\,(0,1)$ for the family of solutions $\{u_\mu\}_{\mu \in\,(0,1)}$, then, thanks to those bounds, we prove that the family $\{u_\mu\}_{\mu \in\,(0,1)}$ is tight in a suitable functional space and, finally, we identify any limiting point for the family $\{u_\mu\}_{\mu \in\,(0,1)}$ with the unique solution of the limiting problem \eqref{limit-eq-intro}. Nevertheless,  the demonstration of these steps is quite challenging and necessitates the introduction of novel arguments and techniques.

  Specifically, we must establish uniform bounds, with respect to $\mu$, for the solutions of \eqref{eq1-infin-intro} within functional spaces possessing higher regularity than $\mathscr{H}=H^1_0(D)\times H$, and the presence of cubic terms in the equation adds an extraordinary level of complexity to proving such bounds.
 The need for delicate uniform bounds for the solution \( u_\mu \), with respect to \( \mu \in (0,1) \), is not specific to this paper but is a key feature of all the existing literature on the Smoluchowski-Kramers approximation mentioned above. However, in this work, we require a priori bounds in spaces of higher regularity than those considered in previous papers, such as \cite{CF1}, \cite{CF2}, \cite{CGH}, and \cite{CX}. The first important reason for this necessity is that we need to take the limit of \( \vert\nabla u_\mu(t)\vert_H^2 u_\mu(t) \), and no integration by parts can be applied to handle the nonlocal term \( \vert\nabla u_\mu(t)\vert_H^2 \). To ensure the required tightness of \( \{u_\mu\}_{\mu \in (0,1)} \) in \( H^1(D) \), among other considerations, we require a priori bounds in \( H^\alpha(D) \) for \( \alpha > 1 \). It is important to note that, in this context, obtaining bounds in \( H^\alpha(D) \) for \( \alpha \in (1,2) \) is not any easier than obtaining bounds in \( H^2(D) \), due to the specific nature of equation \eqref{eq1-infin-intro}. Another reason for requiring bounds in higher-regularity spaces is that, in the proof of the limit, we need uniform bounds for the solution in \( L^2(0,T;H^2(D)) \). Given the peculiar and highly nontrivial nature of equation \eqref{eq1-infin-intro}, achieving this requires obtaining bounds for \( (u_\mu, \sqrt{\mu}\,\partial_t u_\mu) \) in \( H^3(D) \times H^2(D) \). Finally, we emphasize that the cubic nature of the nonlinearities introduces additional challenges, as we must establish uniform bounds for the fourth moments of the solution and its time derivative. For this purpose, see Lemma \ref{Lemma6.1} and Lemma \ref{Lemma7.1}.

\vspace{0.4truecm}

Before concluding this introduction, we provide a brief overview of the contents of our paper. The first two sections are dedicated to the examination of the well-posedness of the abstract problem \eqref{eq-2.26-intro}. In Section \ref{sec-main}, we present the notation and assumptions, and describe how the abstract damped wave equation can be introduced in the deterministic setting. We then introduce the stochastically forced version of the equation and establish a series of preliminary results concerning the diffusion coefficient $\sigma$. Finally, we state the two main results concerning the existence and uniqueness of solutions: Theorem \ref{theorem 2.5} and Theorem \ref{thm-2.7-regular data}. Section \ref{sec3} is dedicated to providing the proofs of these two theorems.

In the remaining seven sections of our paper, we delve into the examination of the validity of the Smoluchowski-Kramers diffusion approximation for the system \eqref{eq1-infin-intro}.
Section \ref{sec4} is dedicated to introducing the necessary notation and assumptions. In Theorem \ref{teo8.3}, we present the main result of this study.
In Section \ref{sec5}, we provide a concrete example to illustrate that our limiting equation \eqref{limit-eq-intro} and equation \eqref{limit-eq-intro-other} are, in fact, two distinct equations.
The subsequent two sections focus on establishing the required uniform bounds for the solution $(u_\mu, \partial_t u_\mu)$ of equation \eqref{eq1-infin-intro}. In Section \ref{sec6}, we prove bounds in $H^1_0(D) \times H$, while in Section \ref{sec7}, we establish bounds in $H^2(D) \times H^1_0(D)$.
Section \ref{sec8} addresses the proof of the tightness of $\{\mathcal{L}(u_\mu)\}_{\mu \in (0,1)}$ within the appropriate functional space. Finally, in Section \ref{sec9}, we conclude the proof of Theorem \ref{thm-2.7-regular data} by identifying any limit point of the family $\{\mathcal{L}(u_\mu)\}_{\mu \in (0,1)}$ as the unique solution of equation \eqref{limit-eq-intro}.

\section{The well-posedness: notations, assumptions and main results}\label{sec-main}

	Let us briefly introduce the basic notations.
	We will denote by $H$ a separable Hilbert space endowed with an inner product $\scalar{H}{\cdot}{\cdot}$ and the corresponding norm $\norma{H}{\cdot}{}$.
If $E$ and $F$ are Banach spaces,  the class of all bounded linear operators from $E$ to $F$  will be denoted by $\mathscr{L}(E,F)$. We will use a shortcut notation
$\mathscr{L}(E)$ for $\mathscr{L}(E,E)$. It is known that $\mathscr{L}(E,F)$ is also a Banach space.  We will denote by $\mathscr{L}_2(E\times E;F)$  the Banach space of all bounded bilinear operators from $E\times E=:E^2$ to $F$. If $K$ is another Hilbert space,  we will denote
	by $\mathscr{T}_2(K,H)$, or $\gamma(K,H)$,   the Hilbert space of all Hilbert-Schmidt operators from $K$ to $H$, endowed with the natural inner product and norm.
It is known that $\mathscr{T}_2(K,H) \hookrightarrow \mathscr{L}(K,H)$ continuously.
If $\{e_j\}_{j \in \mathbb{N}}$ is   an orthonormal basis of a separable Hilbert space $K$ which  is continuously embedded into a Banach space $E$ and
\[
\sum_{j=1}^{\infty} \norm{e_j}{E}^2 <\infty,
\]
then for every $\Lambda \in \mathscr{L}_2(E\times E;H)$ we put
\begin{equation}\label{eqn-tr K}
\tr_K(\Lambda)= \sum_{i=1}^{\infty} \Lambda(e_j,e_j).
\end{equation}

If $X$ is a normed vector space, $a\in X$ and $r>0$, then we will denote by $B_X(a,r)$, respectively $S_X(a,r)$,   the open ball, respectively the sphere,   in  $X$ of radius $r$ and center $a$.

\medskip

In what follows, we shall  assume that   $A_0$ is  a  non-negative self-adjoint operator on $H$ and we shall denote its  domain by $D(A_0)$. If we put
\begin{equation}\label{eqn-A_1}
	A_1 := \sqrt{A_0^2 + \delta I},
	\end{equation}
 with $\delta=0$ if $A_0$ is invertible (i.e. $A_0$ is injective, surjective and the inverse $A_0^{-1}$ is bounded),  and $\delta=1$ otherwise,
then  $A_1$, with  $D(A_1)= D(A_0)$, is  a strictly positive self-adjoint  operator on $H$ and   $0$ belongs to $\rho(A_1)$, the resolvent set of the operator $A_1$. In particular,
the inverse $A_1^{-1}:H \to H$ is bounded.
Whenever we will use the space $D(A_1)$ we will always mean that it is endowed with the  norm $\normb{H}{A_1\cdot }$ and the corresponding inner product.

	We will denote by $\mathscr{H}$  the Hilbert space
	\begin{equation}\label{eqn-H-mathscr}
	\mathscr{H} := D(A_1)\times H=D(A_0)\times H,
	\end{equation}
	endowed with the following  inner product
 \[
\begin{split}
	\scalar{\mathscr{H}}{z_1}{z_2}= \scalar{H}{A_1u_1}{A_1u_2}+\scalar{H}{v_1}{v_2},
	\end{split}\]
	The corresponding norm $|\cdot|_{\mathscr{H}}$ satisfies
	\[
\begin{split}
	\norm{z}{\mathscr{H}}^2= \normb{H}{ A_1 u}^2  + \normb{H}{ v}^2  ,\;\;\; z=(u,v)\in \mathscr{H}.
\end{split}
	\]

	We will also use the following scale of Hilbert spaces
 \begin{equation}\label{eqn-H-mathscr-alpha}
	\mathscr{H}_\alpha :=D(A_1^{1+\alpha})\times D(A_1^\alpha), \;\;\; \alpha \geq 0.
	\end{equation}
Each space $\mathscr{H}_\alpha$ is 	endowed with an inner product defined for every $z_i=(u_i,v_i)\in \mathscr{H}_\alpha$ by
	 \[
	\scalar{\mathscr{H}_\alpha}{z_1}{z_2}
:= \scalar{H}{A_1^{1+\alpha}u_1}{A_1^{1+\alpha}u_2}+\scalar{H}{A_1^\alpha v_1}{A_1^\alpha v_2}.
	\]
		Note that obviously $\mathscr{H}_0=\mathscr{H}$, with equal norms and inner products.

Next, we introduce the  linear operator
$\mathscr{A}$ in the space $\mathscr{H}$ as follows,
\begin{equation}\label{eqn-mathscr A}
\mathscr{A} z
:=(v,-A_0^2u), \;\;\; \ z=(u,v)\in D(\mathscr{A}):=\mathscr{H}_{1}.
\end{equation}
It is well known that $\mathscr{A}$ generates a $C_0$ group (of exponential growth) $\mathscr{S}=(\mathscr{S}(t))_{t\in  \mathbb{R}}$ on $\mathscr{H}$, see e.g. \cite{Brz+Masl+Seidl_2005} and references therein.
If $A_0$ is invertible (so that  we take $\delta=0$ in \eqref{eqn-A_1}) then  $\mathscr{S}$ is a unitary group.
The restriction $\mathscr{A}_\alpha$ of the operator $\mathscr{A}$	 defined by
\[\mathscr{A}_\alpha z=\mathscr{A}z, \;\;\; z\in D(\mathscr{A}_\alpha):=\mathscr{H}_{\alpha+1}\\
\]
is the generator of  a $C_0$ group $\mathscr{S}_\alpha=(\mathscr{S}_\alpha(t))_{t\in  \mathbb{R}}$ on $\mathscr{H}_\alpha$ and
$\mathscr{S}_\alpha(t)$ is the restriction of $\mathscr{S}(t)$ to the space $\mathscr{H}_\alpha$. In what follows, we will not make this distinction and denote all these objects
without the subscript $\alpha$, unless our approach could lead to ambiguity.

\medskip
	
	Let us also consider a separable Hilbert space $K$ and  a separable Banach space $E$ such that $K \subset  E$ continuously and the embedding
\begin{equation}\label{eqn-K to E}
  i: K \hookrightarrow E \mbox{ is gamma-radonifying}.
\end{equation}
  By the Kwapie\'n-Szyma\'nski Theorem \cite{Kwapien+Szym_1980} there exists
an  orthonormal  basis  $\{e_j\}_{j \in \mathbb{N}}$ such that
\[
\sum_{j\geq 1}  \norm{i e_j}{E}^{2} <\infty.
\]
We assume that  $W_j=(W_j(t):t\geq 0)$, $j\in\mathbb{N}$ is a sequence of iid
real Wiener processes
defined on some filtered probability space $(\Omega, \mathscr{F}, (\mathscr{F}_t)_{t\geq 0},\mathbb{P})$ satisfying the usual assumptions.
 Let also
\[
W(t)=\sum_{j= 1}^\infty W_j(t) i e_j
\]
 be a $E$-valued Wiener process. The Reproducing Kernel Hilbert space of the law of $W(1)$ is equal to the space $K$ and the process $W$ can also be viewed as a canonical $K$-cylindrical  Wiener process.

We assume now that  $\gamma$ is a positive constant (we call it the damping coefficient) and fix a mapping $\sigma_0:\mathscr{H}\to\mathscr{L}(E,H)$.  	Our aim is to study a certain {\em constrained} version of the following abstract damped wave equation
		\begin{align}\label{eq-2.1}
    & u_{tt}(t) + A_0^2\,u(t) = -\gamma u_t(t)+\sigma_0(u(t),u_t(t))\,dW(t),
    \end{align} with the following initial conditions
    	\begin{align}
     u(0) =u_0 \in\,D(A_0),\;\;\; \ u_t(0)=v_0 \in\,H.\label{eq-2.2}
	\end{align}

	By  {\em constrained} we mean that we want our solution $u$  to stay on $M$, where $M$ is  the unit sphere  in $H$,
i.e.
\[
M=S_{H}(0,1):=\bigl\{ x \in H: \normb{H}{ x } =1\bigr\}.
\]
In particular,  we need to assume that  the initial data $u_0$  satisfy the same  condition, i.e.
	\begin{equation}
	\normb{H}{ u_0 }=1. \label{eq-2.5}
	\end{equation}
In what follows, it is convenient to use the tangent bundle of $M$ which in the present framework can be defined as
\[
\mathscr{M}:=TM=\bigcup_{u \in\,M}\{ (u,v)\in M\times T_uM \},\ \ \ \ \ \ T_uM= \{v \in\,H\,:\,\scalar{H}{u}{v}=0\}.
\]
Note that  $\mathscr{M}$ is a Hilbert manifold modeled on the Hilbert space $H\times H$. Moreover,
$\mathscr{M}$ is a closed subspace of $\mathscr{H}$. We will endow the former set  with a metric inherited from  the latter space.

	One can heuristically see that if $u(t) \in\,M$, for all $t\geq 0$,   then  the following property is also verified
	\[
	\scalar{H}{u(t)}{u_t(t)}=0 ,\;\;\;t\geq 0. \label{eq-2.6}
	\]
	Hence, in addition to \eqref{eq-2.5}, we also need to assume the following condition on the initial data 	\begin{equation}
	\scalar{H}{u_0}{v_0}=0. \label{eq-2.7}
	\end{equation}

It is quite obvious that a solution to equation \eqref{eq-2.1} with initial   conditions \eqref{eq-2.2}   will not necessarily stay on the manifold $M$, even though the initial data $(u_0,v_0)$ satisfy the compatibility conditions
\eqref{eq-2.5} and \eqref{eq-2.7}, see  \cite{Brz+DHM_2018}.
We will show below that it is possible to resolve this conundrum by modifying equation \eqref{eq-2.1}.
 In order to find the appropriate modification one can think of its deterministic part as the equation
	\begin{equation}
	 u_{tt}(t) + \nabla_u \Phi_0(u(t)) = -\gamma\, u_t(t), \label{eq-2.8}
	\end{equation}
	where the gradient $\nabla_u$  is understood in the $H$-sense and the {\em energy function} $\Phi_0$ is defined by
	\[
	\Phi_0(u) =\frac{1}{2}\normb{H}{ A_0u}^2 ,\;\;\; u\in D(A_0).
	\]
	Recalling that  $M$ is the unit sphere $S_H(0,1)$, if $\Phi$ is the {\em restricted} energy functional
	\[
	\Phi(u)=\Phi_0(u) ,\;\;\;u\in D(A_0)\cap M, \]
	we can  replace the term $\nabla_u \Phi_0$ by $\widetilde{\nabla}_u\Phi_0$.
	Here,  the gradient $\widetilde{\nabla}_u$ is  understood as the gradient of $M$ with respect to the {\em metric} on $M$ inherited from $H$, i.e.
	\[
	\widetilde{\nabla}_a\Phi= \Pi_a(\nabla_a \Phi_0) \in T_a M, \quad a\in M\cap D(A_0), \]
	where, for every $a\in M$, we denote by  $\Pi_a \in \mathscr{L}(H)$ 	 the orthogonal  projection onto $T_aM$, i.e.
	\begin{equation}
\Pi_a:H \ni u \mapsto u-\scalar{H}{u}{a}\, a  \in H. \label{eq-2.14}
	\end{equation}

	Since   we have
	\[
	\nabla_a\Phi_0 = A_0^2\,a, \quad a\in D(A_0^2), \]
	we infer that for $a \in D(A_0)\cap M$,
\[
\begin{split}
	 \widetilde{\nabla}_a\Phi= \Pi_a(A_0^2\,a)  =A_0^2\,a-\scalar{H}{A_0^2\,a}{a} a   =A_0^2\,a-\normb{H}{ A_0\,a}^2a.
\end{split}
	\]
	Moreover, the {\em acceleration} term $u_{tt}(t)$ has also to be modified in a similar fashion so that $u(t)$ stays on $M$, i.e. we need to replace it by
	\[
	\Pi_{u(t)}(u_{tt}(t)) = u_{tt}(t)-\scalar{H}{u_{tt}(t)}{u(t)} u(t).
	\]
Now, since we are assuming that $\normb{H}{ u(t)}=1$, it is immediate to check that
	\[
	\scalar{H}{u_{tt}(t)}{u(t)}=-\normb{H}{ u_t(t)}^2, \;\; t \geq 0,
	\]
	so that
	\[
	\Pi_{u(t)}( u_{tt}(t))=  u_{tt}(t) +  \normb{H}{ u_t(t)}^2u(t). \]

 	In this way we obtain the following  constrained version of equation \eqref{eq-2.8}
	\begin{equation} \label{eq-2.18}
	u_{tt}(t)+  \normb{H}{ u_t(t)}^2u(t)=-A_0^2\,u(t)+\normb{H}{ A_0u(t)}^2u(t)-\gamma\, u_t(t).
	\end{equation}
	The above heuristic argument can be made rigorous, as shown in the following two theorems, whose proofs are postponed to next section, where we will consider the more general stochastic case.
	
	\begin{Theorem} \label{theorem 2.1}
Assume that $f \in L^2_{\mathrm{loc}}([0,\infty; H))$.
		 If the initial data  $(u_0,v_0)\in \mathscr{H}$ satisfy the compatibility  conditions \eqref{eq-2.5} and \eqref{eq-2.7},  then there exists a unique function
		$
		(u,v)\in C(\bigl[0,\infty\bigr);\mathscr{H})$
		such that
\begin{equation}\label{eqn-u on M}
  		u\in C^1(\bigl[0,\infty\bigr);M),\ \ \ \  u_t(t)=v(t),\ \ t\geq 0,
\end{equation}
 and $u$ is a mild solution to  the following equation
 \begin{equation}\label{eq-2.18-external}
	 u_{tt}(t)+ \normb{H}{ u_t(t)}^2u(t)=-A_0^2\,u(t)+\normb{H}{ A_0\,u(t)}^2u(t)-\gamma\, u_t(y)+ \Pi_{u(t)}f(t).
	\end{equation}
Moreover, if we denote
\begin{equation} \label{eqn-Psi-energy functional}	\Psi_{}(u,v):= \frac{1}{2}\left(\normb{H}{ A_0u}^2+ \norma{H}{ v}{H}^2\right), \;\;\; \ (u,v)\in \mathscr{H},
	\end{equation}
then  for every $t\geq 0$ we have
\[
\begin{split}
  \Psi_{}(z(t))&=\Psi_{}(z_0)-\gamma  \int_0^t \norma{H}{ v(s)}{H}^2\, ds + \int_0^t \scalar{H}{v(s)}{f(s)}\, ds.\\
\end{split}	\]
 		\end{Theorem}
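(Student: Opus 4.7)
The plan is to recast \eqref{eq-2.18-external} as an abstract semilinear evolution equation in the phase space $\mathscr{H}$. Setting $z(t)=(u(t),v(t))$, the equation reads $z^\prime(t) = \mathscr{A} z(t) + \mathcal{F}(z(t)) + \mathcal{G}(z(t),f(t))$, where $\mathcal{F}(u,v) := (0,\,-\normb{H}{v}^2 u + \normb{H}{A_0 u}^2 u - \gamma v)$ and $\mathcal{G}((u,v),f) := (0,\,\Pi_u f)$. Since $\mathscr{A}$ generates a $C_0$-group $\mathscr{S}$ on $\mathscr{H}$ and both $\mathcal{F}$ (cubic) and $(u,f)\mapsto \Pi_u f$ (quadratic in $u$, linear in $f$) are locally Lipschitz in the relevant norms, a contraction-mapping argument applied to the Duhamel formula $z(t) = \mathscr{S}(t) z_0 + \int_0^t \mathscr{S}(t-s)\bigl[\mathcal{F}(z(s))+\mathcal{G}(z(s),f(s))\bigr]\,ds$ produces a unique maximal mild solution $z\in C([0,\tau);\mathscr{H})$ together with a blow-up alternative.

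Next I would verify that $z(t)\in\mathscr{M}$ on $[0,\tau)$. For regular data $(u_0,v_0)\in D(A_0^2)\times D(A_0)$ and smooth $f$, the mild solution is classical, so the scalar functions $\phi(t) := \normb{H}{u(t)}^2 - 1$ and $\psi(t) := \scalar{H}{u(t)}{v(t)}$ can be differentiated directly using \eqref{eq-2.18-external}; this yields a closed, linear, homogeneous ODE system for $(\phi,\psi)$ whose coefficients depend on $(u,v,f)$ and are locally integrable on $[0,\tau)$. Since $\phi(0)=\psi(0)=0$ by \eqref{eq-2.5}--\eqref{eq-2.7}, Gronwall forces $\phi\equiv\psi\equiv 0$, i.e.\ $z(t)\in\mathscr{M}$. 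The general $\mathscr{H}$-valued mild-solution case then follows by approximating $(z_0,f)$ with regular data and using continuous dependence of the mild solution on them, together with continuity of the map $u\mapsto \Pi_u$ in $\mathscr{L}(H)$.

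With the constraint in hand, the energy identity drops out by formally pairing \eqref{eq-2.18-external} with $v(t)$ in $H$. The orthogonality $\scalar{H}{u(t)}{v(t)} = 0$ annihilates the two cubic reaction terms $-\normb{H}{v}^2 u$ and $\normb{H}{A_0 u}^2 u$, and $\scalar{H}{\Pi_{u(t)} f(t)}{v(t)} = \scalar{H}{f(t)}{v(t)}$ because $v(t)\in T_{u(t)}M$; together with $\frac{d}{dt}\tfrac{1}{2}\normb{H}{A_0 u(t)}^2 = \scalar{H}{A_0^2 u(t)}{v(t)}$, which cancels the remaining stiff contribution, this yields exactly the claimed identity for $\Psi_{}(z(t))$. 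As before, the identity is first derived for classical solutions and then passed to the limit. The resulting bound $\Psi_{}(z(t)) \le \Psi_{}(z_0) + \tfrac{1}{2\gamma}\int_0^t \normb{H}{f(s)}^2\,ds$, combined with $\normb{H}{u(t)} = 1$, controls $\norma{\mathscr{H}}{z(t)}{}$ on every compact interval, ruling out blow-up and giving $\tau = \infty$; uniqueness is inherited from the local fixed-point step.

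The chief obstacle is that $\mathcal{F}$ has genuine cubic growth and is only locally Lipschitz on $\mathscr{H}$, so no a priori global bound is available without exploiting the manifold constraint. Consequently, the preservation of $\mathscr{M}$ and the energy identity must be established in tandem: staying on $\mathscr{M}$ is exactly what makes the cubic drift terms cancel in the energy computation and thereby produces the a priori bound, while the a priori bound together with continuous dependence on data is what justifies the formal ODE for $(\phi,\psi)$ at the $\mathscr{H}$-mild-solution level. The delicate density/limiting procedure, resting on the local Lipschitz continuity of $u \mapsto \Pi_u$ and on Duhamel-type estimates for $\mathcal{F}$ and $\mathcal{G}$, is where the bulk of the technical work is concentrated.
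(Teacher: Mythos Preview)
Your three-step strategy --- local mild solution via locally Lipschitz semilinear theory, invariance of $\mathscr{M}$ via the linear homogeneous system for $(\phi,\psi)=(\normb{H}{u}^2-1,\,\scalar{H}{u}{v})$, then the energy identity and Gronwall to rule out blow-up --- is exactly the paper's approach (the paper treats Theorem~\ref{theorem 2.1} as the $\sigma_0=0$ specialisation of the stochastic Theorem~\ref{theorem 2.5}, with the external force absorbed as in Remark~\ref{remark 2.3}(3)). The only technical difference is in how the identities for $(\phi,\psi)$ and $\Psi$ are justified at the mild-solution level: the paper packages the needed computations into a ready-made ``It\^o-type'' lemma for mild solutions (Lemma~\ref{lem-Ito}, proved by adapting \cite{Brz+Masl+Seidl_2005}) and applies it directly on $[0,\tau)$, whereas you propose to verify them first for classical solutions with regular data and then pass to the limit by density and continuous dependence. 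These are equivalent routes --- the approximation you spell out is essentially what underlies Lemma~\ref{lem-Ito} --- so nothing is missing; just be aware that the invariance argument is purely local and does not require the a priori energy bound (your final paragraph slightly overstates the interdependence).
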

				The above result can be strengthened in the following way.
	\begin{Theorem} \label{thm-regular data}
	Assume that $\alpha \geq 0$ and fix  $f \in L^2_{\mathrm{loc}}([0,\infty;D(A_0^\alpha))$.  If the initial  data $(u_0,v_0)\in \mathscr{H}_\alpha$ satisfy conditions \eqref{eq-2.5} and \eqref{eq-2.7}, then
the unique solution
		 to problem  \eqref{eq-2.18-external}, guaranteed by Theorem
		 \ref{theorem 2.1}, satisfies
\[
		(u,v)\in C(\bigl[0,\infty\bigr);\mathscr{H}_\alpha),\;\;\; u\in C^1(\bigl[0,\infty\bigr);H_{\alpha+1}).
\]
\end{Theorem}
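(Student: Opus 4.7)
The plan is to rewrite problem \eqref{eq-2.18-external} as a first-order evolution equation and argue by a regularity bootstrap on its mild formulation. I would absorb the linear damping into the generator by setting $\mathscr{A}_\gamma(u,v):=(v,-A_0^2 u -\gamma v)$; since $(u,v)\mapsto(0,-\gamma v)$ is a bounded perturbation of $\mathscr{A}$, the operator $\mathscr{A}_\gamma$ generates a $C_0$-semigroup $\mathscr{S}_\gamma$ on every scale $\mathscr{H}_\beta$, $\beta\ge 0$. Writing the nonlinear and inhomogeneous parts as $N(u,v):=(0,\,\normb{H}{A_0u}^2 u-\normb{H}{v}^2u)$ and $G(t):=(0,\,\Pi_{u(t)}f(t))$, the mild formulation of \eqref{eq-2.18-external} reads
\[
z(t)=\mathscr{S}_\gamma(t)z_0+\int_0^t\mathscr{S}_\gamma(t-s)\bigl[N(z(s))+G(s)\bigr]\,ds.
\]

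Theorem \ref{theorem 2.1} already provides a solution $z=(u,v)\in C([0,\infty);\mathscr{H})$, and its energy identity yields, via a standard Gr\"onwall argument, the uniform bound $\sup_{t\in[0,T]}\bigl(\normb{H}{A_0u(t)}+\normb{H}{v(t)}\bigr)\le C(T,z_0,f)$ on every compact interval $[0,T]$. In particular the scalar coefficients $\normb{H}{v(t)}^2$ and $\normb{H}{A_0u(t)}^2$ that multiply $u$ inside $N$ are bounded, uniformly in $t\in[0,T]$.

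I would then prove the following \emph{regularity improvement step}: whenever $z\in C([0,T];\mathscr{H}_\beta)$ for some $\beta\in[0,\alpha)$, the mild formulation forces $z\in C([0,T];\mathscr{H}_{\min(\alpha,1+\beta)})$. Indeed, since $u(t)\in D(A_0^{1+\beta})$ and the two scalars above are bounded, $N(z)\in C([0,T];\{0\}\times D(A_0^{1+\beta}))$. On the other hand, since $\Pi_{u}f=f-\scalar{H}{f}{u}u$ with $f\in L^2_{\mathrm{loc}}([0,\infty);D(A_0^\alpha))$ and $u\in C([0,T];D(A_0^{1+\beta}))$, one gets $G\in L^2_{\mathrm{loc}}([0,\infty);\{0\}\times D(A_0^{\min(\alpha,1+\beta)}))$. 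Because $z_0\in\mathscr{H}_\alpha\subset\mathscr{H}_{\min(\alpha,1+\beta)}$ and $\mathscr{S}_\gamma$ is a $C_0$-semigroup on $\mathscr{H}_{\min(\alpha,1+\beta)}$, the convolution formula above yields $z\in C([0,T];\mathscr{H}_{\min(\alpha,1+\beta)})$. Iterating this step starting from $\beta_0=0$ and $\beta_{n+1}=\min(\alpha,1+\beta_n)$ reaches $\mathscr{H}_\alpha$ in at most $\lceil\alpha\rceil$ steps; the claimed $C^1$-regularity of $u$ then follows immediately from the identification $u_t=v\in C([0,\infty);D(A_0^\alpha))$.

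The principal difficulty to handle is that the cubic nonlinearities $\normb{H}{v}^2u$ and $\normb{H}{A_0u}^2 u$ would obstruct the bootstrap if one tried to control them through their product structure in the target space $\mathscr{H}_\alpha$. What makes the argument go through is that these ``dangerous'' factors enter only via $H$-norms and are therefore scalar quantities already pinned down by the baseline energy estimate; the only genuinely $u$-dependent contribution to the forcing is the projection $\Pi_u f$, which is linear in $f$ and polynomial in $u$, so that its analysis reduces to tracking the regularity of $u$ alone.
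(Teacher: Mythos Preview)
Your argument is correct and takes a genuinely different route from the paper. The paper does not give a separate proof of Theorem \ref{thm-regular data}; it treats it as the deterministic special case of Theorem \ref{thm-2.7-regular data}, whose proof (sketched at the end of Section~\ref{sec3}) proceeds by first establishing local existence directly in $\mathscr{H}_\alpha$ via the fact that the nonlinearities are Lipschitz on balls there (Lemma~\ref{prop-nonlinearities}), identifying this local solution with the global $\mathscr{H}$-solution by uniqueness, and then proving globality by applying an energy identity to the higher-order functional $\Phi(z)=\tfrac12(\normb{H}{A_0^2u}^2+\normb{H}{A_0v}^2)$ together with Gronwall.

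Your bootstrap instead stays on the mild formulation and exploits a structural point the paper does not foreground: the cubic terms are of the form $(\text{scalar})\cdot u$, where the scalar is an $H$-norm already controlled by the base energy. This lets you upgrade the forcing to $\{0\}\times D(A_0^{1+\beta})$ for free whenever $u\in D(A_0^{1+\beta})$, which is what drives the iteration. The payoff is a cleaner argument that handles every $\alpha\ge 0$ uniformly in $\lceil\alpha\rceil$ steps, whereas the paper only spells out $\alpha=1$. The trade-off is that the paper's energy-functional approach transfers verbatim to the stochastic setting (where It\^o's lemma replaces the deterministic identity), while your bootstrap would require more care there because the stochastic convolution does not automatically preserve the $\mathscr{H}_{1+\beta}$-regularity of $(0,\sigma(z))$ without additional assumptions on $\sigma$.
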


Notice that the  proofs of the above two results require the following assertions about the  non-linearities appearing in  equations  \eqref{eq-2.18} and \eqref{eq-2.18-external}.
\begin{Lemma}\label{prop-nonlinearities}
The following function
\begin{align*}
F:\mathscr{H}\ni z=(u,v) \mapsto \norma{H}{ v}{H}^2u+\norma{H}{ A_0u}{H}^2u  \in H,
\end{align*}
is well defined and is a  homogenous continuous polynomial of degree $3$.
In particular, it is Lipschitz-continuous  on balls. Moreover, the same result holds if $\alpha \geq 0$ and the spaces
$H$ and $\mathscr{H}$ are replaced respectively by $D(A_0^\alpha)$ and $\mathscr{H}_\alpha$.
\end{Lemma}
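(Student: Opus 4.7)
The plan is to realise $F$ as the restriction to the diagonal of a bounded trilinear form. Concretely, define $\widetilde F:\mathscr{H}^3\to H$ by
\[
\widetilde F\bigl((u_1,v_1),(u_2,v_2),(u_3,v_3)\bigr):=\scalar{H}{v_1}{v_2}\,u_3+\scalar{H}{A_0u_1}{A_0u_2}\,u_3,
\]
so that $F(z)=\widetilde F(z,z,z)$. The map $\widetilde F$ is manifestly trilinear, and the elementary estimates $\normb{H}{v_i}\le \norm{z_i}{\mathscr{H}}$, $\normb{H}{A_0 u_i}\le \norm{z_i}{\mathscr{H}}$ and $\normb{H}{u_3}\le \norm{z_3}{\mathscr{H}}$---all immediate from the definition of the norm on $\mathscr{H}$---combined with Cauchy--Schwarz yield
\[
\normb{H}{\widetilde F(z_1,z_2,z_3)}\le 2\,\norm{z_1}{\mathscr{H}}\norm{z_2}{\mathscr{H}}\norm{z_3}{\mathscr{H}}.
\]
Thus $\widetilde F$ is bounded, hence continuous, and by definition this exhibits $F$ as a continuous homogeneous polynomial of degree $3$ into $H$; the homogeneity identity $F(\lambda z)=\lambda^3F(z)$ is immediate from the formula.

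For the Lipschitz-on-balls property I would use the standard telescope
\[
F(z)-F(\bar z)=\widetilde F(z-\bar z,z,z)+\widetilde F(\bar z,z-\bar z,z)+\widetilde F(\bar z,\bar z,z-\bar z),
\]
valid by linearity of $\widetilde F$ in each argument, and apply the trilinear bound termwise to obtain
\[
\normb{H}{F(z)-F(\bar z)}\le 2\bigl(\norm{z}{\mathscr{H}}^2+\norm{z}{\mathscr{H}}\norm{\bar z}{\mathscr{H}}+\norm{\bar z}{\mathscr{H}}^2\bigr)\,\norm{z-\bar z}{\mathscr{H}},
\]
which is precisely Lipschitz continuity on every bounded subset of $\mathscr{H}$.

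The $\mathscr{H}_\alpha$-variant is proved verbatim, replacing the inner products in the definition of $\widetilde F$ by $\scalar{D(A_0^\alpha)}{\cdot}{\cdot}$ and taking values in $D(A_0^\alpha)$. The required one-variable bounds $\normb{D(A_0^\alpha)}{v}, \normb{D(A_0^\alpha)}{A_0u}\le \norm{z}{\mathscr{H}_\alpha}$ and $\normb{D(A_0^\alpha)}{u}\le C\,\norm{z}{\mathscr{H}_\alpha}$ follow at once from the functional calculus of $A_1$: the inequality $\normb{H}{A_0w}\le \normb{H}{A_1w}$ (since $A_1^2=A_0^2+\delta I$), the commutativity of $A_0$ and $A_1^\alpha$, and the boundedness of $A_1^{-1}\in\mathscr{L}(H)$, which yields $\normb{H}{A_1^\alpha u}\le \Normb{\mathscr{L}(H)}{A_1^{-1}}\normb{H}{A_1^{1+\alpha}u}$. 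I do not anticipate any substantive obstacle; the lemma is a routine boundedness check for a multilinear form, the only minor book-keeping point being the systematic use of $A_1$ in place of $A_0$ when bounding the $D(A_0^\alpha)$-norm of $u$ itself, which is precisely what accommodates the case in which $A_0$ is not invertible.
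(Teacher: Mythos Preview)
Your proof is correct and follows exactly the approach sketched in the paper: the authors omit the proof as ``obvious'' and simply record the same trilinear form $\widetilde F(z^1,z^2,z^3)=\scalar{H}{v_1}{v_2}u_3+\scalar{H}{A_0u_1}{A_0u_2}u_3$ that you use. One minor remark on the $\mathscr{H}_\alpha$ case: the paper's later application (proof of Theorem~\ref{thm-2.7-regular data}) keeps the same formula $\norma{H}{v}{H}^2u+\norma{H}{A_0u}{H}^2u$ with $H$-inner products and only upgrades the target space to $D(A_0^\alpha)$, so you need not replace the inner products in $\widetilde F$ at all---the required bound $\normb{D(A_0^\alpha)}{\widetilde F(z_1,z_2,z_3)}\le C\prod_i\norm{z_i}{\mathscr{H}_\alpha}$ is then even more immediate.
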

The proof of this  result is obvious and we omit it. We just observe that the corresponding continuous trilinear function is  given by
\begin{align*}
F:\mathscr{H}^3\ni (z^1,z^2,z^3) \mapsto \scalar{H}{v_1}{v_2} u_3+\scalar{H}{A_0\,u_1}{A_0\,u_2} u_3 \in  H,
\end{align*}
where $z^i=(u_i,v_i) \in\,\mathscr{H}$, for $i=1,2,3$.

\subsection{The stochastic constrained wave equation}
\label{sec-SCWEs}

First of all, we need to introduce the diffusion coefficient $\sigma$. We begin with a function
	$\sigma_0:\mathscr{H} \rightarrow \newspaceH$
	which we assume to satisfy the following conditions.
\begin{Hypothesis}
\label{assumption-sigma_0-1}
The mapping
\begin{equation}
\sigma_0:\mathscr{H} \rightarrow \newspaceH
\end{equation}

is  of linear growth and Lipschitz-continuous on balls, i.e.  there exist $L\geq 0$ and a sequence $(L_n)_{n=1}^\infty $ of nonnegative real numbers  such that
	\begin{align}
	\begin{split}   \label{eq-2.21}
	&\Vert \sigma_0(z)\Vert_{\newspaceH}^2 \leq L(1+\norm{ z }{\mathscr{H}}^2),\;\; z\in \mathscr{H}, \\[10pt]
	&\Vert \sigma_0(z_2)-\sigma_0(z_2) \Vert_{\newspaceH}^2 \leq L_n|z_2-z_1|_{\mathscr{H}}^2,\;\; z_1,z_2\in B_{\mathscr{H}}(0,n), \; n \in \mathbb{N}^\ast.
	\end{split} \end{align}
\end{Hypothesis}

\begin{Remark}\label{rem-Stratonovich}
{\em Because of the Stratonovich integral we are going to use, we need to work with the space $\newspaceH$ instead of the usual  $\mathscr{T}_2(K,H)$. Note that in view of assumption \eqref{eqn-K to E}, the former space is naturally emendable into the latter. To be precise, if $L\in \newspaceH$ then $L\circ i \in \mathscr{T}_2(K,H)$ and the corresponding linear map is continuous.\hfill\(\Box\)}
\end{Remark}

	As in the deterministic case, we have to modify $\sigma_0$ by taking its tangential component.
Thus, we define the function
\begin{equation}
{\sigma}:\mathscr{H} \rightarrow \newspaceH
	\end{equation}
by setting for every   $z=(u,v)\in \mathscr{H}$
	\begin{equation}
	\label{eqn-sigma-def}	
	\begin{split}
	\sigma(z)&:= \Pi_u \circ \bigl(\sigma_0(z)\cdot\bigr)= \sigma_0(z)-\scalar{H}{\sigma_0(z)\cdot}{u}\, u
\in \newspaceH,
\end{split}
	\end{equation}
where $\Pi_u$ is the projection defined in \eqref{eq-2.14}.
Note that in general
\[
\sigma \not= \sigma_0.
\]
In the above, we used the following notation  for $ B\in \newspaceH$ and $u_1,u_2 \in H$,
\[
\scalar{H}{B\cdot}{u_1} u_2=\{E \ni k\mapsto\scalar{H}{Bk}{u_1}\, u_2 \in H\} \in \mathscr{L}(E,H).
\]
Similarly, for $ B\in \mathscr{T}_2(K,H)$ and $u_1,u_2 \in H$, we denote
\[
\scalar{H}{B\cdot}{u_1} u_2=\{K \ni k\mapsto\scalar{H}{Bk}{u_1}\, u_2 \in H\} \in \mathscr{L}(K,H).
\]
In the latter case, it is obvious  that  for all  $u_1,u_2 \in H$,   $\scalar{H}{B\cdot}{u} u$ is a bounded linear operator from $K$ to $H$.

Moreover, if $B\in \mathscr{T}_2(K,H)$ and $u_1,u_2\in H$, then
\begin{equation}
\scalar{H}{B\cdot}{u_1}\, u_2\in \mathscr{T}_2(K,H)
\end{equation}
and
		\begin{equation} \label{eq-2.23}
		\Vert \scalar{H}{B\cdot}{u_1} u_2 \Vert_{\mathscr{T}_2(K,H)} \leq \norma{H}{ u_1}{H}\, \norm{ u_2}{H}\Vert B\Vert_{\mathscr{T}_2(K,H)},\;\;\; u_1,u_2\in H.
		\end{equation}
Actually, if
$\{e_j\}_{j \in\,\mathbb{N}}$  is an orthonormal basis of $K$, then  we have
		\begin{align*}
		\sum_{j=1}^{\infty} \norm{\scalar{H}{Be_j}{u_1} \, u_2}{H}^2
		\leq \norma{H}{ u_1}{H}^2 \norma{H}{ u_2}{H}^2 \, \sum_{j=1}^{\infty} \norm{Be_j}{H}^2  =  \norma{H}{ u_1}{H}^2 \norma{H}{ u_2}{H}^2 \,\Vert B\Vert_{\mathscr{T}_2(K,H)}^2.
		\end{align*}

We begin by noticing that, since $\Pi_u$ is an orthogonal projection, as a consequence of the definition \eqref{eqn-sigma-def} for every   $z=(u,v)\in \mathscr{H}$ and every $e \in E$ we have
	\begin{equation}
	\label{eqn-sigma-1}	
	\normb{H}{\sigma(z)e} \leq  \normb{H}{\sigma_0(z)e}.
	\end{equation}
Moreover, $\sigma$ satisfies the following properties.

	\begin{Lemma} \label{lemma 2.2}
	The  function  $\sigma$  is    Lipschitz-continuous on balls and has cubic growth,  as a function defined on $\mathscr{H}$ with values in $\newspaceH$.
More precisely, for every $z =(u,v) \in \mathscr{H}$
\begin{align}\begin{split}
\label{eqn-growth tilde sigma-1-H}
		&\Vert \sigma(z)\Vert_{\newspaceH} \leq  \Vert \sigma_0(z)\Vert_{\newspaceH} \leq L(1+\norm{ z }{\mathscr{H}}),
\\[10pt]
&\Vert \sigma(z)\Vert_{\newspaceH}
		\leq  \sqrt{L}(1+\norm{z}{\mathscr{H}})(1+\normb{D(A_0)}{ u}
\normb{H}{ u}), \;\; .
		\end{split}\end{align}
		Moreover, for every $z\in \newM$ and $e \in\,E$
\begin{equation}   \label{zs1}
  \sigma(z)\,e \in T_uM.
\end{equation}

\end{Lemma}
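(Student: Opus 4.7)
The strategy is to work directly from the definition $\sigma(z)=\sigma_0(z)-\scalar{H}{\sigma_0(z)\cdot}{u}\,u$ and to reduce each of the three assertions---tangentiality, the two growth bounds, and the local Lipschitz property---to Hypothesis \ref{assumption-sigma_0-1} together with the pointwise rank-one estimate $\normb{H}{\scalar{H}{\sigma_0(z)e}{u}\,u}\le\normb{H}{\sigma_0(z)e}\,\normb{H}{u}^2$ coming from Cauchy--Schwarz. The tangentiality claim \eqref{zs1} is essentially by inspection: for $z=(u,v)\in\mathscr{M}$ one has $\normb{H}{u}=1$, hence $\scalar{H}{\sigma(z)e}{u}=\scalar{H}{\sigma_0(z)e}{u}\bigl(1-\normb{H}{u}^2\bigr)=0$ for every $e\in E$, so $\sigma(z)e\in T_uM$.

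For the two growth bounds in \eqref{eqn-growth tilde sigma-1-H} I would evaluate $\sigma(z)$ on a unit vector $e\in E$ and expand
\[
\normb{H}{\sigma(z)e}^2=\normb{H}{\sigma_0(z)e}^2-\scalar{H}{\sigma_0(z)e}{u}^{2}\bigl(2-\normb{H}{u}^2\bigr),
\]
which gives $\normb{H}{\sigma(z)e}\le\normb{H}{\sigma_0(z)e}$ as soon as $\normb{H}{u}\le\sqrt 2$; when $u$ is larger, the triangle inequality combined with the rank-one bound above yields $\normb{H}{\sigma(z)e}\le\normb{H}{\sigma_0(z)e}(1+\normb{H}{u}^2)$, which is absorbed into a constant multiple of $1+\norm{z}{\mathscr{H}}$ via \eqref{eq-2.21} and produces the first inequality. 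For the cubic-type bound, the same rank-one correction is retained in the split form $\normb{H}{u}\cdot\normb{H}{u}$; one factor is replaced by $C\,\normb{D(A_0)}{u}$ via the continuous embedding $D(A_0)\hookrightarrow H$, and combined with $\Vert\sigma_0(z)\Vert_{\newspaceH}\le\sqrt{L}(1+\norm{z}{\mathscr{H}})$ this gives the mixed factor $1+\normb{D(A_0)}{u}\normb{H}{u}$.

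The main obstacle is the local Lipschitz estimate. Given $n\in\mathbb{N}^\ast$ and $z_1,z_2\in B_\mathscr{H}(0,n)$, I would add and subtract to obtain the four-term decomposition
\begin{align*}
\sigma(z_1)-\sigma(z_2)
&=\bigl[\sigma_0(z_1)-\sigma_0(z_2)\bigr]-\scalar{H}{\bigl(\sigma_0(z_1)-\sigma_0(z_2)\bigr)\cdot}{u_1}\,u_1\\
&\quad-\scalar{H}{\sigma_0(z_2)\cdot}{u_1-u_2}\,u_1-\scalar{H}{\sigma_0(z_2)\cdot}{u_2}\,(u_1-u_2),
\end{align*}
and bound each summand in the $\newspaceH$-norm through the pointwise identity $\normb{H}{\scalar{H}{Be}{a_1}\,a_2}\le\normb{H}{Be}\,\normb{H}{a_1}\,\normb{H}{a_2}$. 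The first two summands are controlled by the local Lipschitz bound $\Vert\sigma_0(z_1)-\sigma_0(z_2)\Vert_{\newspaceH}\le\sqrt{L_n}\,\norm{z_1-z_2}{\mathscr{H}}$ of Hypothesis \ref{assumption-sigma_0-1} together with $\normb{H}{u_i}\le C\,n$, while the last two use the linear growth $\Vert\sigma_0(z_2)\Vert_{\newspaceH}\le\sqrt{L}(1+n)$ combined with $\normb{H}{u_1-u_2}\le C\,\norm{z_1-z_2}{\mathscr{H}}$. The delicate point---and the reason this step is the main obstacle---is the book-keeping of these four cross-terms and the recognition that the resulting Lipschitz constant of $\sigma$ on $B_\mathscr{H}(0,n)$ grows polynomially (in fact cubically) in $n$; once the splitting is written down, every individual piece reduces to a direct application of Hypothesis \ref{assumption-sigma_0-1}.
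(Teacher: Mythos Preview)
Your approach is essentially that of the paper: the tangentiality \eqref{zs1} is immediate from the definition, the cubic growth comes from the triangle inequality combined with the Cauchy--Schwarz estimate on the rank-one term, and the Lipschitz property follows from the same telescoping of $\langle\sigma_0(z)\cdot,u\rangle u$ (the paper writes three terms where you write four, but the content and the bounds are identical).

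One small correction: your claim that, for $\normb{H}{u}>\sqrt{2}$, the extra factor $(1+\normb{H}{u}^2)$ can be ``absorbed into a constant multiple of $1+\norm{z}{\mathscr{H}}$'' and thereby produce the \emph{first} (linear) inequality in \eqref{eqn-growth tilde sigma-1-H} is not right---that route only yields cubic growth. The paper instead invokes the inequality \eqref{eqn-sigma-1}, i.e.\ the contraction property $\|\Pi_u\|\le 1$ of the orthogonal projection; strictly speaking this is only valid when $\normb{H}{u}=1$, so the literal inequality $\Vert\sigma(z)\Vert\le\Vert\sigma_0(z)\Vert$ for \emph{all} $z\in\mathscr{H}$ is a slight overstatement in the lemma. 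It holds (and is only ever used) on $\mathscr{M}$, which is what you correctly establish via your case split.
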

	
	\begin{proof}
Let us begin by observing that  property \eqref{zs1} is an immediate consequence of definition \eqref{eqn-sigma-def}. Moreover,
 \eqref{eqn-growth tilde sigma-1-H} follows from    \eqref{eq-2.21} and \eqref{eqn-sigma-1}.

Next,  by the definition of the function $\sigma$ we have
\begin{align}\begin{split}
\label{eqn-singma-sigma_0}
\normb{D(A_0)}{ {\sigma }(z)(e) } &\leq \normb{D(A_0)}{ \sigma_0 (z)e} +\vert \scalar{H}{\sigma_0(z)e}{u}  \vert  \normb{D(A_0)}{ u}
\\[10pt]
&\leq \normb{D(A_0)}{ \sigma_0 (z)e} + \normb{H}{\sigma_0(z)e}\normb{H} {u}   \normb{D(A_0)}{ u}
\\[10pt]
&\leq \Vert \sigma_0(z)\Vert_{\mathscr{L}(E,D(A_0))} \norm{e}{E}  +\Vert \sigma_0(z)\Vert_{\mathscr{L}(E,H)} \norm{e}{E} \normb{D(A_0)}{ u}
\normb{H}{ u}
\\[10pt]
&\leq \sqrt{L}(1+\norm{ z }{\mathscr{H}})  \bigl(1+  \normb{D(A_0)}{ u}
\normb{H}{ u} \bigr)\vert e\vert_E,
\end{split}\end{align}
and hence \eqref{eqn-singma-sigma_0} follows.

In order to prove that  the map $ \sigma$ 		is Lipschitz-continuous on balls it is sufficient to prove
that the second term on the RHS of \eqref{eqn-sigma-def}, i.e. the map
\begin{equation}
		 (u,v) \in\,\mathscr{H}  \mapsto  \scalar{H}{\sigma_0(u,v)\cdot}{u} u \in \newspaceH,
  \end{equation}
		is Lipschitz-continuous on balls. For this purpose, if we fix
$		z_i = (u_i,v_i)\in \mathscr{H}$, $i=1,2$, we have
		\begin{align*}
		\scalar{H}{ \sigma_0(z_2)\cdot}{u_2} u_2-&\scalar{H}{ \sigma_0(z_1)\cdot}{u_1} u_1  =\scalar{H}{ (\sigma_0(z_2)-\sigma_0(z_1))\cdot}{u_2} u_2
\\[10pt]
&\hslp+\scalar{H}{ \sigma(z_1)\cdot}{u_2-u_1} u_2 + \scalar{H}{ \sigma_0(z_1)\cdot}{u_1}(u_2-u_1).
		\end{align*}
This implies  that
		\begin{align*}
		\Vert\scalar{H}{\sigma_0(z_2)\cdot}{u_2} u_2&-\scalar{H}{\sigma_0(z_1)\cdot}{u_1} u_1 \Vert_{\newspaceH}
\leq \normb{H}{ u_2}^2 \Vert \sigma_0(z_2)-\sigma_0(z_1)\Vert_{\newspaceH}
\\[10pt]
		&\hsl
		+\Vert  \sigma_0(z_1)\Vert_{\newspaceH}\normb{H}{ u_2-u_1}\left(\norma{H}{ u_1}{H}+\norma{H}{ u_2}{H}\right).
		\end{align*}
Therefore,  since the map $\sigma_0: \mathscr{H} \rightarrow \newspaceH$ is Lipschitz-continuous on balls and of linear growth, the result follows.
	\end{proof}

Once we have defined the diffusion operator $\sigma$, the stochastic version of equation \eqref{eq-2.18} can be written as
	\begin{equation}\label{eq-2.26}
	 u_{tt}(t)+  \norma{H}{ u_t(t)}{H}^2u(t)=-A_0^2u(t)+\norma{H}{ A_0u(t)}{H}^2 \,u(t)-\gamma u_t(t) +\sigma(u(t),u_t(t))\,\circ\, dW(t),
	\end{equation}
	with the initial data satisfying   $(u_0,v_0)\in \mathscr{M}$.
 \begin{Remark} \label{remark 2.3bis}
{\em In this paper we have made the choice to consider the stochastic differential  in the Stratonovich sense.
For general constraints, this ensures that the constraint is preserved for all $t\geq 0$
In the specific case of our equation, where the solution is constrained to the unit sphere in $L^2$, we could have also used the It\^o formulation while still maintaining the constraint. However, since the analysis in this case does not require different arguments or techniques, we focus solely on noise in the Stratonovich sense for the sake of brevity.}

\end{Remark}
	
	\begin{Remark} \label{remark 2.3}
{\em \begin{enumerate}
 \item[1.] The definition we gave in \eqref{eqn-sigma-def} for the function $\sigma$
is not unique. We only require that condition \eqref{zs1} is satisfied. As a matter of fact,
formula \eqref{eqn-sigma-def} is simply one of many that fulfills this assumption. Another one is the following
\begin{equation}\label{eqn-new sigma}
 \widehat{\sigma}(z) = \phi (\norma{H}{ u }{H})\, \sigma(z), \;\;z\in \mathscr{H},
\end{equation}
where  $\phi: [0,\infty) \to [0,\infty)$ is an auxiliary $C_0^\infty$  bump function such that
\[
\phi(r)=\begin{cases}
1, &\mbox{ if }\ \  \norm{r-1}{} \leq \frac14, \\[10pt]
0, &\mbox{ if }\ \  \norm{r-1}{} \geq  \frac12.
\end{cases}
\]

\item[2.] If we assume that the function $\sigma_0$  depends only on the first component $u$ of $z=(u,v)$,
 i.e. there exists a function
	\begin{equation}
	g_0:H \rightarrow \newspaceH \label{eqn-g_0}
	\end{equation}
such that
	\begin{equation}
	\sigma_0(z)=g_0(u), \;\; z=(u,v) \in  \mathscr{H}, \label{eqn-sigma_0-g_0}
	\end{equation}
then we can  	 modify $g_0$ by taking its tangent part. This means that with a slight abuse of notation, we can define
	\begin{align*}
	{g}(u):= \Pi_u(g_0(u))
= g_0(u)-\scalar{H}{g_0(u)\cdot}{u} u  \in \newspaceH, \;\; u \in {H}.
	\end{align*}
Then, the function $\sigma$ associated by formula \eqref{eqn-sigma-def} with the function $\sigma_0$ defined  in formula \eqref{eqn-sigma_0-g_0} satisfies
\[
\sigma(z)=g(u), \;\; z=(u,v) \in  \newspaceH.
\]

In this case, it is possible  to weaken assumptions on $g_0$ and to assume    that $g_0$ is only defined on $M$ and Lipschitz-continuous. Then by the classical Kirszbraun Theorem \cite{Kirszbraun_1934}, see also
\cite{Brz+Rana_2022} and \cite{Brz+Carr_2003}, we can find a (globally) Lipschitz-continuous and bounded extension of $g_0$ from $M$ to the whole $H$.
\item[3.] We could have added a force $f$ term to the above equation as in the deterministic  equation \eqref{eq-2.18-external}, assuming only that $f$ is an $H$-valued progressively measurable process
such that  $f \in L^2_{\mathrm{loc}}([0,\infty; H))$, $\mathbb{P}$-almost surely. But for the sake of simplicity of exposition we have not done so.  Indeed, in this paper we concentrate on different issues.
\hfill\(\Box\)
\end{enumerate}
}
\end{Remark}

  As we already mentioned  in the present paper we decided to study the equation above  in the Stratonovich sense. In fact, we can rewrite the Stratonovich term using the standard It\^o differential,
see e.g. \cite{Brz+Elw_2000}.
To this purpose, with the notations we have introduced above,  if we denote $z=(u, u_t)$, we  rewrite the second order in time equation \eqref{eq-2.26} as a system of two equations of first order in time
\begin{equation}\label{eqn-strong solution}
  \left\{
  \begin{array}{l}
 \ds{ dz(t)=\mathscr{A}_{}z(t)\,dt+ \bigl( -\norma{H}{  u_t(t)}{H}^2u(t)+\norma{H}{ A_0u(t)}{H}^2u(t)  -\gamma\, u_t(t)     \bigr) \, dt
} \\[10pt]
 \ds{\ \ \ \ \ \ \ \ \ \ \  + (0,\sigma(z(t)) \bigr)\,\circ\,dW(t), }\\[10pt]
 \ds{z(0)=(u_0,v_0).}	
  \end{array}\right.
 \end{equation}
 \dela{\[
 \left\{
  \begin{split}
 \ds{ dz(t)&=\mathscr{A}_{}z(t)\,dt+ \bigl( -\norma{H}{ v(t)}{H}^2u(t)+^{-1}\norma{H}{ A_0u(t)}{H}^2u(t)  -^{-1}\gamma v(t)     \bigr) \, dt
} \\[10pt]
 \ds{  &+ (0,\sigma(z(t)) \bigr)\,\circ\,dW(t), }\\[10pt]
 \ds{z(0)&=(u_0,v_0).}	
  \end{split}\right.
  \]}

For a $C^1$-class function $G: \mathscr{H}\to \mathscr{L}(E,\mathscr{H})$, we define, see \cite[Definition 3.1]{Brz+Elw_2000},
\[
\begin{split}
\int_0^t G(z(s)) \,\circ\,dW(s)&:=\int_0^t G(z(s)) \,dW(s)
+\frac12  \int_0^t \tr_{K} \bigl[G^\prime(z(s))G(z(s))\bigr]  \,ds,
\end{split}
\]
with $\tr_K$  defined  as in \eqref{eqn-tr K}.
Note that (see comments after \cite[Definition 3.1]{Brz+Elw_2000})  for all $z \in \mathscr{H}$ we have $d_zG=G^\prime(z) \in \mathscr{L}(\mathscr{H}; \mathscr{L}(E,\mathscr{H}))$ so that
\[G^\prime(z)\,G(z):=d_zG \cdot G(z) \in \mathscr{L}(E; \mathscr{L}(E,\mathscr{H}))\equiv \mathscr{L}_2(E\times E,\mathscr{H}),\] i.e.
\[
G^\prime(z)G(z)(e_1,e_2)=\Bigl[ d_zG\bigl( G(z)e_1\bigr)\bigr]e_2, \;\; (e_1,e_2) \in E\times E.
\]
 This means that $\tr_{K} \bigl[ G^\prime(z(s)G(z(s))\bigr]$ is a well defined element of $\mathscr{H}$ and satisfies
\begin{equation}\label{eqn-tr K-G'G}
\tr_K \Bigl[ G^\prime(z)G(z) \Bigr]= \sum_{i=1}^{\infty}\Bigl[ d_zG\bigl( G(z)e_j\bigr)\Bigr]e_j,
\end{equation}
where $\{e_j\}_{j \in\,\mathbb{N}}$ is   an orthonormal basis  of  $K$. In particular, if
\begin{align}
&G: \mathscr{H} \ni z=(u,v) \mapsto \left\{E \ni k \mapsto  \left(
	0, 	\sigma(z) k \right) \in \mathscr{H} \right\} \in \mathscr{L}(E,\mathscr{H}),
\end{align}
 where $\sigma: \mathscr{H} \to \mathscr{L}(E,H)$, then for every $z=(u,v)$ and $w=(x,y)$ in $\mathscr{H}$ we have the following expression for the Fr\'echet derivative of $G$,
 \begin{align*}
 [d_zG](w)=[d_{(u,v)}G](x,y)&=\bigl(0, [\partial_u \sigma(z)](x)+ [\partial_v \sigma(z)](y)  \bigr),
 \end{align*}
 where
 \begin{align*}
 \partial_u \sigma(z) \in\,  \mathscr{L}   \bigl(D(A_0),\mathscr{L}(E,H)\bigr),\;\;\;
    \partial_v \sigma(z)  \in\,  \mathscr{L}   \bigl(H,\mathscr{L}(E,H)\bigr),
  \end{align*}
 are the directional   Fr\'echet derivatives of function $\sigma$ at $z$.  Therefore, we deduce that for every $z\in \mathscr{H}$ we have
 \[
G^\prime(z) G(z)= [d_zG](G(z))=\bigl(0,  [\partial_v \sigma(z)](\sigma(z))  \bigr),\]
and
 \[ \tr_{K} \bigl[ G^\prime(z)G(z)\bigr] = \tr_{K} \bigl[ [d_zG]G(z) \bigr]= \bigl(0, \tr_{K} \bigl[ \partial_v \sigma(z)\,\sigma(z) \bigr]  \bigr).
 \]
In view of formula \eqref{eqn-tr K-G'G} we have
\begin{equation}\label{eqn-tr K-sigma'sigma}
\tr_K  \Bigl[ \partial_v \sigma(z)\,\sigma(z) \Bigr] = \sum_{i=1}^{\infty} \partial_v \sigma(z) \bigl( \sigma(z)e_j\bigr)e_j.
\end{equation}
Thus, we have the following formula for the Stratonovich integral in equation \eqref{eqn-strong solution}
\[
\begin{split}
\int_0^t &(0,\sigma(z(s)) \bigr)\,\circ\,dW(s)=\int_0^t (0,\sigma(z(s)) \bigr)\,dW(s)
+\frac12  \int_0^t  \Bigl(0,\tr_{K} \bigl[ \partial_v\sigma(z(s))\,\sigma(z(s)) \bigr] \Bigr)\,ds.
\end{split}
\]

The above results explain why we need to make the following additional assumption.
\begin{Hypothesis}
\label{assumption-sigma_0}
The function 	$\sigma_0:\mathscr{H} \rightarrow \newspaceH$ is of $C^1$-class in the sense that the directional   Fr\'echet derivative $\partial_v \sigma_0(z) \in  \mathscr{L}   \bigl(H,\mathscr{L}(E,H)\bigr) $   exists  for every $z \in \mathscr{H}$ and the map
\[
\partial_v \sigma_0: \mathscr{H} \ni z \mapsto   \partial_v \sigma_0(z) \in  \mathscr{L}   \bigl(H,\mathscr{L}(E,H)\bigr),
\]
is continuous.
Moreover, the function
\begin{equation}\label{eqn-Stratonovich correction term}
\mathscr{H} \ni z \mapsto \tr_{K} \bigl[\partial_v\sigma_0(z)\,\sigma(z) \bigr] \in H,
\end{equation}
is Lipschitz-continuous on balls and has linear growth.
\end{Hypothesis}
\begin{Remark}\label{rem-assumption-sigma_0}
{\em In the framework of  Remark \ref{remark 2.3}-2,  we have
\begin{equation}\label{eqn-g-Stratonovich correction term}
\tr_{K} \bigl[\partial_v\sigma(z)\sigma(z) \bigr]=\tr_{K} \bigl[\partial_vg(z)\tilde{g}(z) \bigr]=0, \ \ \ \ z \in \mathscr{M}.
\end{equation}
Thus, in this case we do not need Hypothesis \ref{assumption-sigma_0} and,  instead of \eqref{eqn-g_0},  we can assume that
$g_0:H \rightarrow \mathscr{T}_2(K,H)$.
Moreover, in Theorem \ref{theorem 2.5} we will need to assume that $g_0$ is Lipschitz-continuous on balls and of linear growth, while
in Theorem \ref{thm-2.7-regular data} we will need to assume that the following {\em restriction} map
\[
	g_0:D({A}_0) \rightarrow \mathscr{T}_2(K,D({A}_0)),
	\]
	is  Lipschitz-continuous on balls and  of linear growth. \hfill\(\Box\)}
\end{Remark}

	\begin{Proposition} \label{prop-assumtopn-sigma}
Assume that Hypothesis  \ref{assumption-sigma_0} holds.
	Then the  map $\newsigma:\mathscr{H}
\to \newspaceH$ defined in \eqref{eqn-sigma-def} is of $C^1$-class in the sense of  Hypothesis \ref{assumption-sigma_0}
 and
\[
 \partial_v {\sigma }(z)(y)= \partial_v \sigma_0 (z)(y)- \scalar{H}{\partial_v \sigma_0 (z)(y) \cdot}{u}\, u,
	\;\;\; \ z, y\in \mathscr{H}.
\]
In particular,
\begin{equation}\label{eqn-partial_v tilde sigma}
\begin{split}
 \bigl[\partial_v {\sigma }(z)(y)\bigr]e&= \bigl[\partial_v \sigma_0 (z)(y)\bigl]e
 - \scalarb{H}{ \bigl[\partial_v \sigma_0 (z)(y)\bigl]e,u}\, u,
	\;\;\; \ z,y\in \mathscr{H},\ \ \ \  e\in E.
\end{split}\end{equation}
Moreover, the function
\[
\mathscr{H} \ni z \mapsto \tr_{K} \bigl[\partial_v\newsigma(z)\,\newsigma(z) \bigr] \in \adda{D(A_0)}\dela{H},
\]
 is Lipschitz-continuous on balls and of polynomial growth.	\end{Proposition}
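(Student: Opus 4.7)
My plan is to differentiate formula \eqref{eqn-sigma-def} directly and then reduce the Stratonovich trace on the left-hand side to a projection of the quantity already controlled by Hypothesis \ref{assumption-sigma_0}. Since in $z=(u,v) \in \mathscr{H}$ the first component $u$ is independent of $v$, the chain rule applied to $\sigma(z) = \sigma_0(z) - \langle \sigma_0(z) \cdot, u\rangle_H\, u$ in the $v$-direction gives immediately
\[
\partial_v \sigma(z)(y) = \partial_v \sigma_0(z)(y) - \langle \partial_v \sigma_0(z)(y)\cdot, u\rangle_H\, u, \qquad z,y \in \mathscr{H},
\]
which is formula \eqref{eqn-partial_v tilde sigma}. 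The continuity of $z \mapsto \partial_v \sigma(z)$ as a map into $\mathscr{L}(H, \mathscr{L}(E,H))$ then follows from the continuity of $\partial_v \sigma_0$ (supplied by Hypothesis \ref{assumption-sigma_0}) and the joint continuity of the nonlinear correction $(u, B) \mapsto \langle B\cdot, u\rangle_H\, u$ on $H \times \mathscr{L}(E,H)$.

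For the trace, I would expand via \eqref{eqn-tr K-sigma'sigma}, substitute $y = \sigma(z)e_j$ in the formula above, and pull the bounded projection $\Pi_u \in \mathscr{L}(H)$ out of the resulting convergent series. Since $[\partial_v \sigma(z)(\sigma(z)e_j)]e_j = \Pi_u\bigl([\partial_v \sigma_0(z)(\sigma(z)e_j)]e_j\bigr)$, this yields the clean identity
\[
\tr_K\bigl[\partial_v \sigma(z)\,\sigma(z)\bigr] = \Pi_u\Bigl(\tr_K\bigl[\partial_v \sigma_0(z)\,\sigma(z)\bigr]\Bigr).
\]
This is really the heart of the argument: the Stratonovich correction for $\sigma$ is just the tangential projection of the Stratonovich correction for $\sigma_0$.

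From here, the Lipschitz-on-balls and polynomial growth properties of $z \mapsto \tr_K[\partial_v \sigma(z)\sigma(z)]$ follow by combining Hypothesis \ref{assumption-sigma_0} with the elementary estimates
\[
|\Pi_u h|_H \leq (1 + |u|_H^2)|h|_H,
\]
\[
|\Pi_{u_2} h_2 - \Pi_{u_1} h_1|_H \leq (1 + |u_2|_H^2)|h_2-h_1|_H + |h_1|_H(|u_1|_H + |u_2|_H)|u_2-u_1|_H.
\]
Linear growth of the $\sigma_0$-trace gets upgraded to at most cubic growth in $|z|_{\mathscr{H}}$ through the factor $(1+|u|_H^2)$, and Lipschitz-continuity on $\mathscr{H}$-balls is preserved by the second estimate.

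The main obstacle is the upgrade of the value space from $H$ to $D(A_0)$ in the displayed conclusion: the identity for the trace only produces an $H$-valued map automatically, since $\Pi_u h = h - \langle h,u\rangle_H\, u$ preserves $D(A_0)$ only when $h$ itself lies in $D(A_0)$ (the correction term $\langle h,u\rangle_H\, u$ is always in $D(A_0)$ because $u \in D(A_0)$, but the term $h$ itself is not). Closing this gap requires a $D(A_0)$-valued analogue of the last clause of Hypothesis \ref{assumption-sigma_0}, consistent with the fact already exploited in the proof of Lemma \ref{lemma 2.2} that $\sigma_0$ acts continuously into $\mathscr{L}(E, D(A_0))$ via the $A_0$-norm bound. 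I would therefore first establish that $\tr_K[\partial_v \sigma_0(z)\sigma(z)]$ lies in $D(A_0)$ with a polynomial bound on $|A_1 \cdot|_H$, and then repeat the projection computation verbatim in $D(A_0)$ using that $\Pi_u$ restricts to a bounded map on $D(A_0)$ whenever $u \in D(A_0)$.
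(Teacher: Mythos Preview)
Your approach is correct and in fact far more detailed than the paper's own proof, which simply cites Cartan's treatise for the differentiation formula and says nothing further. In particular, your projection identity
\[
\tr_K\bigl[\partial_v \sigma(z)\,\sigma(z)\bigr] = \Pi_u\Bigl(\tr_K\bigl[\partial_v \sigma_0(z)\,\sigma(z)\bigr]\Bigr)
\]
is exactly the right structural observation, and your derivation of Lipschitz-on-balls and polynomial growth from it via the elementary bounds on $\Pi_u$ is clean and correct. The paper does not spell out this step at all.

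Your concern about the $D(A_0)$ target space is well taken and is not addressed by the paper's proof either. As you observe, Hypothesis~\ref{assumption-sigma_0} only places $\tr_K[\partial_v\sigma_0(z)\,\sigma(z)]$ in $H$, and $\Pi_u$ by itself does not upgrade this to $D(A_0)$ (the correction term $\langle h,u\rangle_H\, u$ lands in $D(A_0)$ since $u\in D(A_0)$, but the leading term $h$ need not). For the purposes of Theorem~\ref{theorem 2.5} only $H$-valuedness of the trace is actually used in the mild formulation~\eqref{eqn-mild solution}; the $D(A_0)$-valued version is invoked only in the proof sketch of Theorem~\ref{thm-2.7-regular data}, where the stronger hypothesis $\sigma_0:\mathscr{H}_1\to\mathscr{L}(E,D(A_0))$ is in force. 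So your instinct to re-run the argument under that extra regularity assumption is exactly how the gap should be closed.
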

\begin{proof}
The first part of the result follows directly by applying classical results from Cartan's treatise \cite{Cartan_1971_DC}.
\end{proof}

	We shall prove the following stochastic generalisation of Theorem \ref{theorem 2.1}.

	\begin{Theorem} \label{theorem 2.5}
Assume that Hypotheses  \ref{assumption-sigma_0-1} and \ref{assumption-sigma_0} hold.
		Then for every $z_0=(u_0,v_0)\in \mathscr{M}$
there exists a unique solution to the stochastic constrained wave  equation \eqref{eq-2.26}. Namely, there exists a unique
$ \mathscr{M}$-valued continuous   and adapted process $z(t)=(u(t),v(t))$ such that
\begin{enumerate}
\item[1.]
the process $u$ has $M$-valued $C^1$-class trajectories and
\[v(t)=u_t(t),\;\; t \geq 0,\]
\item[2.] the process $z$ is a mild solution of equation \eqref{eq-2.26} with  initial condition $z_0$, i.e.
for every $t\geq 0$, $\mathbb{P}$-almost surely,
\begin{equation}\label{eqn-mild solution}
  \begin{aligned}
  z(t)&=\mathscr{S}(t)z_0+ \int_0^t \mathscr{S}(t-s) \bigl(0,-\norma{H}{ v(s)}{H}^2u(s)+\norma{H}{ A_0u(s)}{H}^2u(s)  -\gamma v(s)
   \bigr) \, ds
  \\[10pt]
  &\hsllp
  + \int_0^t \mathscr{S}(t-s) \bigl(0, \sigma(z(s)) \bigr)\,dW(s)
  +\frac12  \int_0^t  \mathscr{S}(t-s) \bigl(0,\tr_{K} \bigl[\partial_v\sigma(z(s))\,\sigma(z(s)) \bigr]\,ds,
  \end{aligned}
\end{equation}
where   $\mathscr{S}=(\mathscr{S}(t))_{t\in  \mathbb{R}}$  is the
$C_0$ group  in $\mathscr{H}$ generated by $\mathscr{A}$ defined in \eqref{eqn-mathscr A}.
\end{enumerate}
\medskip
Moreover, if
$\Psi$ is the energy function defined in \eqref{eqn-Psi-energy functional},
then the following energy equality holds, for $t\geq 0$, $\mathbb{P}$-almost surely,
\begin{align}
\label{eqn-energy-Psi}  
  \Psi(z(t))&=\Psi(z_0)-\gamma  \int_0^t \norma{H}{ v(s)}{H}^2\, ds + \int_0^t \scalar{H}{v(s)}{\sigma(z(s))\,dW(s)}
 \\[10pt]
 &+\frac{1}{2} \int_0^t \scalar{H}{v(s)}{\tr_K\bigl[\partial_v\sigma_0(z(s))\,\sigma(z(s))]}\, ds +\frac{1}{2} \int_0^t  \Vert \sigma(z(s))\circ i\Vert^2_{\mathscr{T}_2(K,H)}\,ds.
\nonumber
\end{align}
 		\end{Theorem}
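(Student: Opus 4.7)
Fix $n\in\mathbb{N}$ and a $C^\infty$ cutoff $\chi_n:[0,\infty)\to[0,1]$ with $\chi_n(r)=1$ for $r\leq n$ and $\chi_n(r)=0$ for $r\geq n+1$. Multiply the cubic drift $F(z):=\norma{H}{v}{H}^2 u+\norma{H}{A_0u}{H}^2 u$, the diffusion $\sigma(z)$ and the corrector $\tr_K[\partial_v\sigma(z)\sigma(z)]$ by $\chi_n(\norm{z}{\mathscr{H}})$. Lemma~\ref{prop-nonlinearities}, Lemma~\ref{lemma 2.2} and Proposition~\ref{prop-assumtopn-sigma} (together with Hypotheses~\ref{assumption-sigma_0-1} and~\ref{assumption-sigma_0}) ensure that all truncated coefficients are globally Lipschitz and of linear growth on $\mathscr{H}$. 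A standard Banach fixed-point argument in $L^p(\Omega;C([0,T];\mathscr{H}))$ then produces a unique mild solution $z_n\in C([0,T];\mathscr{H})$ of the truncated version of \eqref{eqn-mild solution}. Stopping at $\tau_n:=\inf\{t\geq 0:\norm{z_n(t)}{\mathscr{H}}\geq n\}$ and using pathwise uniqueness, the processes $z_n$ agree on $[0,\tau_n]$, hence they define a unique maximal local mild solution $z=(u,v)$ on $[0,\tau_\infty)$, with $\tau_\infty:=\sup_n\tau_n$.

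\textbf{Step 2: Invariance of the constraint.} The core claim is that $z(t)\in\mathscr{M}$ for every $t<\tau_\infty$. Set
\[
\phi(t):=\norma{H}{u(t)}{H}^2-1, \qquad \psi(t):=\scalar{H}{u(t)}{v(t)},
\]
both vanishing at $t=0$. From $du=v\,dt$ we have $d\phi=2\psi\,dt$. An It\^o expansion of $\psi$ along the truncated equation, using $\scalar{H}{u}{A_0^2u}=\norma{H}{A_0u}{H}^2$ together with the identity \eqref{eqn-partial_v tilde sigma}, shows that the Stratonovich corrector and the stochastic integral, when contracted with $u$, each factor through $(1-\norma{H}{u}{H}^2)=-\phi$, since \eqref{eqn-sigma-def} makes $\sigma(z)e$ and $\partial_v\sigma(z)(y)e$ orthogonal to $u$ precisely on $\{\norma{H}{u}{H}=1\}$. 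After rearrangement one obtains
\[
d\phi=2\psi\,dt,\qquad d\psi=\bigl(\norma{H}{A_0u}{H}^2-\norma{H}{v}{H}^2\bigr)\phi\,dt-\gamma\psi\,dt+\phi\,dN_t,
\]
where $N$ is a continuous semimartingale whose drift and quadratic variation are bounded on $[0,\tau_n]$ (by truncation and the polynomial bounds of Lemma~\ref{lemma 2.2} and Proposition~\ref{prop-assumtopn-sigma}). This is a homogeneous linear SDE in $(\phi,\psi)$; an energy estimate on $\phi^2+\psi^2$ followed by Gronwall's lemma forces $(\phi,\psi)\equiv(0,0)$ on $[0,\tau_n]$, so $u(t)\in M$ and $v(t)\in T_{u(t)}M$, i.e.\ $z(t)\in\mathscr{M}$.

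\textbf{Step 3: Energy identity and global existence.} Applying It\^o's formula to $\Psi$ from \eqref{eqn-Psi-energy functional} (after Yosida regularisation of $A_0^2$ to legitimise the pairing $\scalar{H}{v}{A_0^2u}$, then passing to the limit), the skew-symmetric pair involving $A_0^2$ cancels, the cubic contribution $(\norma{H}{A_0u}{H}^2-\norma{H}{v}{H}^2)\scalar{H}{u}{v}$ vanishes on $\mathscr{M}$ by Step~2, and the quadratic variation yields $\tfrac12\Vert\sigma(z)\circ i\Vert^2_{\mathscr{T}_2(K,H)}$. Using Proposition~\ref{prop-assumtopn-sigma} together with $\scalar{H}{u}{v}=0$, one can replace $\partial_v\sigma$ by $\partial_v\sigma_0$ in the corrector contracted against $v$, thereby recovering exactly \eqref{eqn-energy-Psi}. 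Taking expectations, the Burkholder--Davis--Gundy inequality together with the linear-growth bounds of $\sigma$ and of $\tr_K[\partial_v\sigma_0\,\sigma]$ produces, via Gronwall, the estimate $\mathbb{E}\sup_{s\leq t\wedge\tau_n}\Psi(z(s))\leq C(T)$ for every $T>0$. Since $\norm{z}{\mathscr{H}}^2=2\Psi(z)+1$ on $\mathscr{M}$, this forces $\tau_n\nearrow\infty$ almost surely, giving a unique global solution.

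\textbf{Main obstacle.} The delicate point is Step~2. The mild solution is not regular enough for a direct application of It\^o's formula to $\norma{H}{u(t)}{H}^2$ or $\scalar{H}{u(t)}{v(t)}$, so the entire computation must be carried out on a regularised problem (Yosida approximation of $\mathscr{A}$ or smoother initial data) and then transferred to the limit. Moreover, the structural observation that every perturbation---the cubic drift, the diffusion, and the Stratonovich corrector---contracted with $u$ factors through $\phi$ is not automatic: it rests crucially on the tangential form \eqref{eqn-sigma-def} of $\sigma$ and on the differentiation rule \eqref{eqn-partial_v tilde sigma} granted by Hypothesis~\ref{assumption-sigma_0}. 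Once the invariance is secured, Step~3 and the removal of the truncation are comparatively standard.
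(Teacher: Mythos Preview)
Your proposal is correct and follows essentially the same three-step strategy as the paper: local existence via Lipschitz-on-balls coefficients, invariance of $\mathscr{M}$ by deriving a homogeneous linear SDE for the pair $(\phi,\psi)=(\norma{H}{u}{H}^2-1,\scalar{H}{u}{v})$, and globality via the energy functional $\Psi$ and Gronwall. The paper packages the regularity issue you flag in Step~2 into a separate It\^o lemma (Lemma~\ref{lem-Ito}, proved by adapting \cite{Brz+Masl+Seidl_2005}) rather than spelling out a Yosida approximation, and in Step~3 it takes the expectation of the stopped inequality directly (the martingale term vanishes) rather than invoking BDG for a $\sup$ bound, but these are cosmetic differences.
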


We will also prove the following strengthening  of Theorem \ref{theorem 2.5}.

	\begin{Theorem} \label{thm-2.7-regular data}
Suppose that $\sigma_0$, as well as the corresponding  restriction map (denoted by the same symbol $\sigma_0$), map   $\mathscr{H}_1$ into  $\mathscr{L}(E,D({A}_0))$
and are of linear growth and Lipschitz-continuous on balls.
Then, if the initial  condition $z_0$ belongs to $\mathscr{H}_1\cap \mathscr{M}$  the unique solution
		 to problem  \eqref{eqn-mild solution} guaranteed by Theorem
		 \ref{theorem 2.5} satisfies, $\mathbb{P}$-almost surely,
\[
		(u,v)\in C(\bigl[0,\infty\bigr);\mathscr{H}_1).
		\]
\end{Theorem}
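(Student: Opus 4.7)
The plan is to mirror the fixed-point construction of Theorem~\ref{theorem 2.5}, carried out in the smaller space $\mathscr{H}_1$ instead of $\mathscr{H}$, and then invoke pathwise uniqueness in $\mathscr{H}$ to identify the resulting process with the solution $z$ already obtained. First I would record the $\mathscr{H}_1$-versions of the preparatory results. Lemma~\ref{prop-nonlinearities} with $\alpha=1$ states that the deterministic nonlinearity $F$ is a continuous homogeneous cubic polynomial, hence Lipschitz on balls, as a map $\mathscr{H}_1 \to D(A_0)$. The computation \eqref{eqn-singma-sigma_0}, applied to $A_0\sigma(z)e$ instead of $\sigma(z)e$, combined with the strengthened hypothesis $\sigma_0:\mathscr{H}_1 \to \mathscr{L}(E,D(A_0))$, yields that $\sigma$ also maps $\mathscr{H}_1$ into $\mathscr{L}(E,D(A_0))$ and is Lipschitz on $\mathscr{H}_1$-balls with polynomial growth. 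The obvious refinement of Proposition~\ref{prop-assumtopn-sigma} shows that the Stratonovich-to-It\^o corrector is a locally Lipschitz, polynomial-growth map $\mathscr{H}_1 \to D(A_0)$. Finally, since $\mathscr{S}$ restricts to a $C_0$-group $\mathscr{S}_1$ on $\mathscr{H}_1$, every term on the right-hand side of \eqref{eqn-mild solution} makes sense as an $\mathscr{H}_1$-valued integral whenever the integrand does.

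With this in place, I would truncate all nonlinearities at level $N$ in the $\mathscr{H}_1$-norm to make them globally Lipschitz $\mathscr{H}_1 \to \mathscr{H}_1$, run the Banach contraction argument exactly as in the proof of Theorem~\ref{theorem 2.5} but in $C([0,T];\mathscr{H}_1)$, and obtain a unique mild solution $z^{(N)}$ of the truncated problem with $z^{(N)}(0)=z_0 \in \mathscr{H}_1 \cap \mathscr{M}$. Setting $\tau_N := \inf\{t\geq 0 : \Vert z^{(N)}(t)\Vert_{\mathscr{H}_1} > N\}$, on $[0,\tau_N)$ the process $z^{(N)}$ satisfies the untruncated equation \eqref{eqn-mild solution}; since both $z^{(N)}$ and the solution $z$ of Theorem~\ref{theorem 2.5} live in $\mathscr{H}$ on this time interval, pathwise uniqueness in $\mathscr{H}$ gives $z^{(N)}=z$ on $[0,\tau_N)$. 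Writing $\tau_\infty := \sup_N \tau_N$, the theorem reduces to the statement $\tau_\infty = \infty$ almost surely.

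The main obstacle is the a priori $\mathscr{H}_1$-bound. I would introduce the higher-order energy
\[
\Psi_1(u,v) := \tfrac12\bigl(\Vert A_0^2 u\Vert_H^2 + \Vert A_0 v\Vert_H^2\bigr)
\]
and apply It\^o's formula to $\Psi_1(z^{(N)}(t))$ on $[0,\tau_N)$. Self-adjointness of $A_0$ produces the crucial cancellation $\langle A_0^2 u, A_0^2 v\rangle_H - \langle A_0^3 u, A_0 v\rangle_H = 0$; the damping contributes $-\gamma \Vert A_0 v\Vert_H^2$; the two cubic terms yield $(\Vert A_0 u\Vert_H^2 - \Vert v\Vert_H^2)\langle A_0 u, A_0 v\rangle_H$, which by Cauchy--Schwarz and Young's inequality is bounded by $C\Vert A_0 u\Vert_H^3\Vert A_0 v\Vert_H \leq C\Psi(z)^{3/2}\Psi_1(z)^{1/2} \leq C\bigl(\Psi(z)^3 + \Psi_1(z)\bigr)$; and the stochastic and Stratonovich-correction terms are controlled through the It\^o isometry by the $\mathscr{L}(E,D(A_0))$-estimate for $\sigma$ together with the BDG inequality. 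The key observation is that the would-be Gronwall coefficient contains the $\mathscr{H}$-energy $\Psi(z(t))$, which by the identity \eqref{eqn-energy-Psi} established in Theorem~\ref{theorem 2.5} is almost surely bounded on $[0,T]$ (and has finite moments of all orders via BDG), so the Gronwall coefficient and forcing are both $N$-independent and pathwise integrable. A stochastic Gronwall lemma applied on $[0,T\wedge\tau_N]$ then yields $\mathbb{E}\sup_{t\leq T\wedge\tau_N}\Psi_1(z(t)) \leq C(T,z_0)$ uniformly in $N$, which forces $\tau_N\to\infty$ and concludes that $(u,v)\in C([0,\infty);\mathscr{H}_1)$ almost surely.
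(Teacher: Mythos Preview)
Your proposal is correct and follows essentially the same route as the paper: re-run the local existence argument in $\mathscr{H}_1$ using the $\mathscr{H}_1$-versions of Lemma~\ref{prop-nonlinearities}, Lemma~\ref{lemma 2.2} and Proposition~\ref{prop-assumtopn-sigma}, identify the local $\mathscr{H}_1$-solution with the global $\mathscr{H}$-solution via uniqueness, and then close a Gronwall estimate for the higher energy $\Psi_1(u,v)=\tfrac12(\vert A_0^2 u\vert_H^2+\vert A_0 v\vert_H^2)$ (the paper calls it $\Phi$) to force the stopping times to infinity. One cosmetic point: your intermediate bound $C\vert A_0 u\vert_H^3\vert A_0 v\vert_H$ for the cubic term is not quite right as written (the $\vert v\vert_H^2$ piece does not vanish), but your final estimate $C\Psi(z)^{3/2}\Psi_1(z)^{1/2}$ is correct once you use $\bigl\vert\,\vert A_0 u\vert_H^2-\vert v\vert_H^2\,\bigr\vert\leq 2\Psi(z)$.
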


Theorems \ref{theorem 2.5} and \ref{thm-2.7-regular data}
will be proven in the next section.
The   strategy of our proof is as follows.
	\begin{itemize}
	\item[-] Instead of equation \eqref{eq-2.26} we will study its mild form \eqref{eqn-mild solution}, which admits a unique local maximal solution.
	\item[-] We will prove that this solution stays on $M$. In other words we will prove that $z$ stays on the tangent bundle $\mathscr{M}$ of $M$.
	\item[-] We will prove that the solution is global by exploiting the energy functional $\Psi$ and using the Hasminski criterion.
\end{itemize}


\begin{Remark}\label{rem-sigma only on M}
{\em It is now quite obvious that our basic object should  not be a function $\sigma_0$ defined on the whole space $\mathscr{H}$ and taking values in  $\mathscr{L}(K,H)$, but a function
\[
\widetilde{\sigma}: \mathscr{M}\to  \mathscr{L}(K,H)
\]
such that for every $(u,v) \in  \mathscr{M}$, the range $R(\widetilde{\sigma}(u,v))$ is a subset of the tangent plane $T_uM$, i.e.
\[
\widetilde{\sigma}(u,v)\,k \subset T_uM,\ \ \ \ (u,v) \in  \mathscr{M},\ \ \ \ k \in\,K.
\]
 Obviously, the last condition is equivalent to the following one
\[
\scalar{H}{\widetilde{\sigma}(u,v)k}{u}= 0,\ \ \ \ (u,v) \in  \mathscr{H},\ \ \ \ k \in\,K.
\]
 We would also need to assume that $\sigma$ satisfies a natural modification of Hypothesis \ref{assumption-sigma_0-1} and  \ref{assumption-sigma_0}.
 In particular, we would need to assume that there exist $L\geq 0$ and a sequence $(L_n)_{n=1}^\infty $ of nonnegative real numbers  such that for all $(u,v)\in \mathscr{M}$	\begin{align*}
	\Vert \widetilde{\sigma}(u,v)\Vert_{\newspaceH}^2 &\leq L(1+\norma{H}{ v}{H}^2),\end{align*}
and for all $(u_1,v_1),(u_2,v_2)\in \mathscr{M}$, with
 $v_1,v_2 \in B_{H}(0,n)$, it holds
 \begin{align*}	\Vert \widetilde{\sigma}(u_2,v_2)-\widetilde{\sigma}(u_1,v_1) \Vert_{\newspaceH}^2 &\leq L_n|(u_2,v_2)-(u_1,v_1)|_{\mathscr{H}}^2.
	\end{align*}

Had we had  decided to follow this path we would only need, purely for the purposes of the proof, to construct an extension
$\sigma$ of $\widetilde{\sigma}$ to the whole  $\mathscr{H}$, with values in $\mathscr{L}(K,H)$,  satisfying
Hypothesis \ref{assumption-sigma_0-1} and \ref{assumption-sigma_0} and such function $\sigma$ would automatically satisfy assertion \eqref{zs1} from Lemma \ref{lemma 2.2}.
In the same vein, in the case the diffusion coefficient does not depend on the second variable,  we could have started with a Lipschitz-continuous function
\[
g : M \to  \mathscr{L}(K,H)
\]
such that
\[
g(u)\,k \subset T_uM,\;\;\;  u \in M,\ \ \ k \in\,K.
\]
 Notice that the last condition is equivalent to requiring that
$
\scalar{H}{g(u)k}{u}= 0$, for all $u \in\,  M$ and $k \in\,K.$
\hfill\(\Box\)}
\end{Remark}

We finish this section by noticing that the above stated results, and in particular Theorem \ref{theorem 2.5}, are also true for the following equation
	\begin{equation}\label{eqn-SCWE-mass}
	\mu u_{tt}(t)+\mu  \norma{H}{ u_t(t)}{H}^2 u(t)=-A_0^2u(t)+\norma{H}{ A_0u(t)}{H}^2 u(t)-\gamma u_t(t) +\sigma(u(t),u_t(t))\,\circ dW(t),
	\end{equation}
where $\mu>0$ is an arbitrary positive constant representing the mass of the object under consideration.
As always, we consider the above with 	 initial conditions   $(u_0,v_0)\in \mathscr{H}$ that satisfy the constraints conditions \eqref{eq-2.5} and \eqref{eq-2.7}.
Obviously, equation \eqref{eqn-SCWE-mass} can be written in the following form
	\begin{equation}\label{eqn-SCWE-mass-2}
	 u_{tt}(t)+\norma{H}{ u_t(t)}{H}^2 u(t)=-\frac{1}{\mu} A_0^2u(t)+\frac{1}{\mu} \norma{H}{ A_0u(t)}{H}^2 u(t)-\frac{\gamma}{\mu} u_t(t) +\frac{1}{\mu} \sigma(u(t),v(t))\,\circ dW(t),
	\end{equation}
which is of the form of equation \eqref{eq-2.26}.

An important difference is that we consider equation \eqref{eqn-SCWE-mass-2} in the spaces $\mathscr{H}$ or $\mathscr{H}_1$ which are independent of $\mu$ and
the linear operator  $\mathscr{A}$  and the $C_0$ group $\mathscr{S}$ on $\mathscr{H}$ introduced above are replaced by
$\mathscr{A}_\mu$  and  $\mathscr{S}_\mu$.
Thus, in order to rigorously define the solution 	to equation \eqref{eqn-SCWE-mass-2} we introduce the  linear operator
$\mathscr{A}_\mu$ in the space $\mathscr{H}$ as follows,
\[\begin{split}
\mathscr{A}_\mu z&=(v,-\mu^{-1}A_0^2\,u), \;\;\; \  z=(u,v)\in D(\mathscr{A}_\mu):=D(A_0^2)\times D(A_0).
\end{split}
\]
It is well known that $\mathscr{A}_\mu$ generates a $C_0$ group $\mathscr{S}_\mu=(\mathscr{S}_\mu(t))_{t\in  \mathbb{R}}$ in $\mathscr{H}$, see e.g. \cite{Brz+Masl+Seidl_2005} and references therein.

 \section{Proofs of Theorem \ref{theorem 2.5} and Theorem \ref{thm-2.7-regular data}
}
\label{sec-proof of existence}
\label{sec3}

By using  the $C_0$ group  $\mathscr{S}(t)$ generated by the operator $\mathscr{A}$ in $\mathscr{H}$, we rewrite equation  \eqref{eq-2.26}  in the  mild form   \eqref{eqn-mild solution},
on the whole space $\mathscr{H}$. We recall that, according to  Lemma \ref{lemma 2.2},  the map $\newsigma$    is Lipschitz-continuous on balls and of cubic growth. Moreover,
by  Proposition \ref{prop-assumtopn-sigma}, the  map $\newsigma$ is of class $C^1$ in the sense of  Hypothesis \ref{assumption-sigma_0} and
the function
\[\mathscr{H} \ni z \mapsto \tr_{K} \bigl[\partial_v\newsigma(z)\,\newsigma(z) \bigr] \in H,\]
  is also Lipschitz-continuous on balls and of polynomial  growth. Finally, by
Proposition \ref{prop-nonlinearities},
the function
\[F:\mathscr{H}\ni z=(u,v) \mapsto \norma{H}{ v}{H}^2u +\norma{H}{ A_0u}{H}^2\,u  \in H,\]
  is   Lipschitz-continuous on balls.
Therefore,  by proceeding in a standard way (compare, for example, \cite[Theorem 1.5]{Seidler_1993}  or
\cite[Theorem 4.10]{Brz_1997}), we can find a unique maximal local mild solution  $z(t)=\bigl(u(t),v(t)\bigr)$ for
equation  \eqref{eqn-mild solution}, defined up to a certain stopping time $\tau$.   In what follows, by following  \cite{Brz+Masl+Seidl_2005} with some important modifications as in \cite{Brz+Dhariwal_2021} and \cite{Brz+Huss_2020}, we will prove that
\[
\mathbb{P}\left(\tau=\infty\right)=1.
\]

We first establish the following fundamental result, c.f. \cite[Theorem 4.1]{Brz+Huss_2020}.

\begin{Proposition} \label{proposition 3.1} The manifold $\mathscr{M}$ is invariant for the process $z(t)$, $t<  \tau$. More precisely,  we have		\begin{equation}
		\scalar{H}{u(t)}{v(t)}=0, \ \ \ \ t\in [0,\tau),\;\;\;\mathbb{P}-\text{a.s}. \label{eq-3.18}
		\end{equation}
\end{Proposition}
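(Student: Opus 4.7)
My plan is to track simultaneously the two constraint-defects
$\varphi(t):=\normb{H}{u(t)}^2-1$ and $\psi(t):=\scalar{H}{u(t)}{v(t)}$ on $[0,\tau)$. Since $z_0\in\mathscr{M}$ we have $\varphi(0)=\psi(0)=0$, and the strategy is to show that $(\varphi,\psi)$ solves a closed homogeneous linear stochastic system with vanishing initial datum, whence $(\varphi,\psi)\equiv(0,0)$ and in particular \eqref{eq-3.18} holds.

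The first equation is immediate: since $u\in C([0,\tau);D(A_0))\cap C^1([0,\tau);H)$ with $u_t=v$, the elementary chain rule yields $d\varphi=2\psi\,dt$. For $d\psi$, I would first pass to an approximating problem---e.g.\ via Yosida regularization of $\mathscr{A}$, or smoothing the initial datum and the nonlinearity so that the solution becomes $D(\mathscr{A})$-valued, in the spirit of \cite{Brz+Masl+Seidl_2005,Brz+Huss_2020,Brz+Dhariwal_2021}---apply the classical It\^o formula to the bilinear functional $(u,v)\mapsto\scalar{H}{u}{v}$, and then pass to the limit using continuous dependence of the mild solution. Substituting the It\^o form of the equation for $v$ and using $\scalar{H}{u}{A_0^2 u}=\normb{H}{A_0u}^2$ together with $\normb{H}{u}^2=1+\varphi$, the drift collapses to $[(\normb{H}{A_0u}^2-\normb{H}{v}^2)\varphi-\gamma\psi]\,dt$, plus the stochastic and Stratonovich-corrector contributions.

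The decisive cancellation is algebraic and comes from the definition \eqref{eqn-sigma-def}: for every $e\in E$,
\[\scalar{H}{u}{\sigma(z)e}=(1-\normb{H}{u}^2)\scalar{H}{u}{\sigma_0(z)e}=-\varphi\,\scalar{H}{u}{\sigma_0(z)e},\]
and by formula \eqref{eqn-partial_v tilde sigma} of Proposition~\ref{prop-assumtopn-sigma} the same factor $\varphi$ is extracted from the Stratonovich-to-It\^o corrector,
\[\scalar{H}{u}{\tr_K[\partial_v\sigma(z)\,\sigma(z)]}=-\varphi\,\scalar{H}{u}{\tr_K[\partial_v\sigma_0(z)\,\sigma(z)]}.\]
Hence $(\varphi,\psi)$ satisfies the homogeneous linear system
\[d\varphi=2\psi\,dt,\qquad d\psi=[a(t)\varphi-\gamma\psi]\,dt+\varphi\,b(t)\,dW,\]
where $a$ is a scalar-valued process and $b(t)\in\mathscr{L}(E,\mathbb{R})$, both locally bounded in $\Vert z\Vert_{\mathscr{H}}$ thanks to Lemma~\ref{lemma 2.2} and Proposition~\ref{prop-assumtopn-sigma}.

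To finish I would localize by $\tau_N:=\inf\{t<\tau:\Vert z(t)\Vert_{\mathscr{H}}\geq N\}$, on which $|a|+\Vert b\Vert_{\mathscr{L}(E,\mathbb{R})}^{2}\leq C_N$, apply It\^o's formula to $\varphi^2+\psi^2$, take expectations to kill the martingale term, and invoke Gronwall's lemma to obtain $\mathbb{E}[\varphi(t\wedge\tau_N)^2+\psi(t\wedge\tau_N)^2]=0$ for every $t\geq 0$; sending $N\to\infty$ with $\tau_N\uparrow\tau$ completes the proof. The principal technical obstacle I foresee is the rigorous application of It\^o's formula, since in general $u(t)\notin D(A_0^2)$ for a mild solution; this will be handled by the regularization scheme mentioned above, whose viability rests on the fact that the algebraic cancellations displayed in the previous paragraph are preserved under approximation of $(\sigma_0,A_0,z_0)$.
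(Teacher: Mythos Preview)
Your proposal is correct and follows essentially the same approach as the paper: the paper also introduces the pair $(\varphi,\psi)$ (with $\varphi=\tfrac12(\normb{H}{u}^2-1)$ rather than your normalization), derives the same closed homogeneous linear stochastic system for it via exactly the algebraic cancellations you display (packaged there as Lemma~\ref{lemma 3.2}), and concludes from $(\varphi(0),\psi(0))=(0,0)$. The only cosmetic differences are that the paper invokes a ready-made It\^o lemma for mild solutions (Lemma~\ref{lem-Ito}, following \cite{Brz+Masl+Seidl_2005}) instead of spelling out a regularization, and simply asserts uniqueness for the linear system where you make the localization-plus-Gronwall step explicit.
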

Before we embark on the proof, we  state a few essential equalities.

\begin{Lemma} \label{lemma 3.2}
	For every $z=(u,v)\in \mathscr{H}$, with $ u\in D(A_0^2)$, we have
	\begin{align}
	\begin{split}
	\label{eqn-a}
	&\scalar{H}{u}{-A_0^2u+\norma{H}{ A_0u}{H}^2u}=\norma{H}{ A_0u}{H}^2\bigl(\norma{H}{ u}{H}^2-1\bigr),
\\[10pt]
	&\scalar{H}{u}{\sigma(z)e}=-\scalar{H}{u}{ \sigma_0(z)e} \bigl(\norma{H}{ u}{H}^2-1\bigr), \;\;\; e\in E,\\[10pt]
	&\scalar{H}{u}{\tr_K\bigl[\partial_v\sigma(z)\sigma(z)]}=-\scalar{H}{u}{\tr_K\bigl[\partial_v \sigma_0(z) \sigma(z)\bigr]}  \bigl(\norma{H}{ u}{H}^2-1\bigr).
	\end{split}
\end{align}
Moreover, for every  $z\in \mathscr{M}$,  we have
	\begin{align}
\label{eq-3.20}	&\scalar{H}{v}{\sigma(z)\,e}=\scalar{H}{v}{\sigma_0(z)\,e},\;\;\; e\in E,
\end{align}
and
\begin{align}
\label{eq-3.21}
	&\scalar{H}{v}{\tr_K\bigl[\partial_v\sigma(z)\sigma(z)]}=\scalar{H}{v}{\tr_K\bigl[\partial_v\sigma_0(z)(\sigma(z))]}.
\end{align}
\end{Lemma}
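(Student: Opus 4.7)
The plan is to prove all five identities by straightforward unfolding of the definition of $\sigma$ in \eqref{eqn-sigma-def} together with the expression \eqref{eqn-partial_v tilde sigma} for $\partial_v\sigma$, combined with the self-adjointness of $A_0$ and, in the second batch, the tangency condition $\langle u,v\rangle_H=0$ characterising $\mathscr{M}$. None of the steps should require an approximation argument, because each identity is an algebraic equality valid for any $z$ in the respective domain.

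First I would handle the purely algebraic identity
\[
\scalar{H}{u}{-A_0^2u+\norma{H}{A_0u}{H}^2 u}=-\scalar{H}{A_0u}{A_0u}+\norma{H}{A_0u}{H}^2\norma{H}{u}{H}^2=\norma{H}{A_0u}{H}^2\bigl(\norma{H}{u}{H}^2-1\bigr),
\]
which uses only the fact that $u\in D(A_0^2)$ lies in the domain of $A_0^2$ and that $A_0$ is self-adjoint. Next, for the second line of \eqref{eqn-a}, I would apply $\sigma(z)e=\sigma_0(z)e-\scalar{H}{\sigma_0(z)e}{u}u$ from \eqref{eqn-sigma-def} to compute
\[
\scalar{H}{u}{\sigma(z)e}=\scalar{H}{u}{\sigma_0(z)e}-\scalar{H}{\sigma_0(z)e}{u}\norma{H}{u}{H}^2=\scalar{H}{u}{\sigma_0(z)e}\bigl(1-\norma{H}{u}{H}^2\bigr),
\]
which is the claimed formula. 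Observe that no regularity beyond $z\in\mathscr{H}$ is needed for this step.

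For the third identity of \eqref{eqn-a} I would start from formula \eqref{eqn-partial_v tilde sigma} in Proposition \ref{prop-assumtopn-sigma}, which gives, for every $j$,
\[
\bigl[\partial_v\sigma(z)(\sigma(z)e_j)\bigr]e_j=\bigl[\partial_v\sigma_0(z)(\sigma(z)e_j)\bigr]e_j-\scalarb{H}{\bigl[\partial_v\sigma_0(z)(\sigma(z)e_j)\bigr]e_j}{u}u.
\]
Taking the inner product with $u$ and using $\norma{H}{u}{H}^2$ in the second term yields the factor $(1-\norma{H}{u}{H}^2)$ exactly as in the previous step. Summing the convergent series over $j\in\mathbb{N}$ using \eqref{eqn-tr K-sigma'sigma}, which is justified by Proposition \ref{prop-assumtopn-sigma}, produces the third identity.

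Finally, for the two assertions \eqref{eq-3.20} and \eqref{eq-3.21} on $\mathscr{M}$, the same two expansions are repeated, but now taking the inner product with $v$ in place of $u$. Each of the correction terms is of the form $\scalarb{H}{\cdot}{u}\,\scalar{H}{v}{u}$, and since $z\in\mathscr{M}$ means precisely $\scalar{H}{u}{v}=0$, all the correction terms vanish identically. For \eqref{eq-3.21} the corresponding series converges absolutely by the trace estimates in Proposition \ref{prop-assumtopn-sigma}, so the identity passes to the limit termwise. I do not anticipate a genuine obstacle: everything reduces to bookkeeping of rank-one projections. The only point requiring a little care is the justification of exchanging the inner product with $u$ (respectively $v$) with the Hilbert--Schmidt trace in \eqref{eqn-tr K-sigma'sigma}, but this is immediate since $\langle u,\cdot\rangle_H\in H^\ast$ is a continuous linear functional and the series in \eqref{eqn-tr K-sigma'sigma} converges in $H$.
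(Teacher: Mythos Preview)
Your proposal is correct and follows essentially the same approach as the paper's proof: both arguments unfold the definition \eqref{eqn-sigma-def} of $\sigma$ and the formula \eqref{eqn-partial_v tilde sigma} for $\partial_v\sigma$, take inner products with $u$ or $v$, and use either the factor $(1-\norma{H}{u}{H}^2)$ or the tangency condition $\scalar{H}{u}{v}=0$ to conclude. The paper introduces auxiliary notation $B=\sigma(z)$, $T_0=\partial_v\sigma_0(z)$, $T=\partial_v\sigma(z)$ to organise the trace computations, but the substance is identical to what you wrote.
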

\begin{proof}
The first identity in \eqref{eqn-a} is obvious, because $A_0$ is a self-adjoint operator.
The second one and \eqref{eq-3.20} are  straightforward consequences of the definition   \eqref{eqn-sigma-def} of  the function $\sigma$.
 Now, in order to prove the third identity in  \eqref{eqn-a}, we fix $z=(u,v)\in \mathscr{H}$, such that  $u\in D(A_0^2)$, and we define
\[
B:=\sigma(z)\in \mathscr{L}(E,H), \]
and
\[T_0:= \partial_v \sigma_0(z)\in  \mathscr{L}   \bigl(H,\mathscr{L}(E,H)\bigr), \;\;\; \ T:= \partial_v\sigma(z) \in  \mathscr{L}   \bigl(H,\mathscr{L}(E,H)\bigr).
 \]
With these notations,  we can rewrite  formula \eqref{eqn-tr K-sigma'sigma} as follows
\[
\begin{split}
\tr_K  \bigl[ \partial_v \sigma(z)\,\sigma(z) \bigr] &= \sum_{j=1}^{\infty}\,\bigl[ T  B e_j\bigr]e_j.
\end{split}
\]
Similarly we have
\[
\begin{split}
\tr_K  \bigl[ \partial_v \sigma_0(z)\,\sigma(z) \bigr] &= \sum_{j=1}^{\infty}\,\bigl[ T_0  \,B e_j\bigr]e_j.
\end{split}\]
Moreover,  by  \eqref{eqn-partial_v tilde sigma}  we have
\begin{align*}
 &\bigl[ T y\bigr]e= \bigl[  T_0 y\bigl]e
 - \scalarb{H}{\bigl[ T_0 y\bigl]e}{u}\, u,\ \ \ \  y\in H,\ \ \ e\in E,
\end{align*}
and this implies that
\[
\begin{split}
 \scalarb{H}{ u}{ \bigl[ T y\bigr]e} &= \scalarb{H}{ u}{ \bigl[  T_0 y\bigl]e}
 - \scalarb{H}{ u}{ \scalarb{H}{\bigl[ T_0 y\bigl]e}{u}\, u }
 =\scalarb{H}{ u}{ \bigl[  T_0 y\bigl]e}\,\bigl(1 - \norma{H}{ u }{H}^2 \bigr).
 \end{split}
\]
In particular,
\begin{align*}
\scalarb{H}{ u}{ \tr_K  \bigl[ \partial_v \sigma(z)\,\sigma(z) \bigr] }  &=\sum_{j=1}^{\infty}
\scalarb{H}{ u}{\bigl[ T  \, B e_j\bigr]e_j }
\\[10pt]
&\hs=  \bigl(1 - \norma{H}{ u }{H}^2 \bigr)  \sum_{j=1}^{\infty}
\scalarb{H}{ u}{ \bigl[  T_0 \,B (e_j)\bigl]e_j}= \bigl(1 - \normb{H}{ u }^2 \bigr)\scalarb{H}{ u}{ \tr_K  \bigl[ \partial_v {\sigma}_0(z)\,\sigma(z) \bigr] },
\end{align*}
and the third identity  in \eqref{eqn-a} follows.

Finally, assume that $z=(u,v)\in \mathscr{M}$. For every $y\in H$ and $e\in E$, we have
\[
\begin{split}
 \scalarb{H}{ v}{ \bigl[ T y\bigr]e} &= \scalarb{H}{ v}{ \bigl[  T_0 y \bigl]e}
 - \scalarb{H}{ v}{ \scalarb{H}{\bigl[ T_0  y \bigl]e}{u}\, u }
 =\scalarb{H}{ u}{ \bigl[  T_0  y\bigl]e}.
\end{split}
\]
Therefore,
\begin{align*}
&\scalarb{H}{ v} {\tr_K  \bigl[ \partial_v \sigma(z)\,\sigma(z)) \bigr] =\sum_{j=1}^{\infty}
\scalarb{H}{ v}{
\bigl[ T  B e_j\bigr]e_j }} \\[10pt] &\hsp=\sum_{j=1}^{\infty}
\scalarb{H}{ v}{
\bigl[ T_0 B e_j\bigr]e_j }
=
\scalarb{H}{ v}{ \tr_K\bigl[\partial_v \sigma_0(z)\, \sigma(z)\bigr] },
\end{align*}
and identity \eqref{eq-3.21} follows.
\end{proof}

The proof of Proposition \ref{proposition 3.1} will also use the following version of the It\^o Lemma which is a special case of Lemma \ref{lem-Ito-final}.

\begin{Lemma}\label{lem-Ito}
Assume that  a local process $z(t)=(x(t),y(t))$ is a solution to
\begin{equation}
\left\{\begin{array}{l}
\ds{dy(t)=\left[-A_0^2\,x(t)+f(t)\right]\,dt+g(t)\,dW(t), }\\[10pt]
\ds{dx(t)=y(t)dt,}
\end{array}\right.
\label{eq-3.10}
\end{equation}
where all processes are progressively measurable,  $f$ is $H$-valued, $g$ is $\mathscr{T}_2(K,H)$-valued, and $x(t)$ is $D(A_0)$-valued such that    for every $t \geq 0$, 
\begin{equation}\label{eqn-g+f}
\mathbb{E} \int_0^t \left[ \Vert g(s)\Vert^2_{\mathscr{T}_2(K,H)}+\Vert f(s)\Vert^2_{H}\right]  \,ds < \infty
\end{equation}

In other words,  we assume the above  and 
\[
  \begin{split}
  z(t)&=\mathscr{S}(t)z_0+ \int_0^t \mathscr{S}(t-s) \bigl(0,  f(s)
   \bigr) \, ds
+ \int_0^t \mathscr{S}(t-s) \bigl(0, g(s) \bigr)\,dW(s), \ \ \ t\geq 0.
  \end{split}
\]
Then, for every $t \geq 0$, $\mathbb{P}$-almost surely,
\begin{equation}
\begin{split}
\lb x(t),y(t)\rb_H&=\lb x(0),y(0)\rb_H -\int_0^t  \norma{H}{ A_0x(s)}{H}^2\,ds+\int_0^t  \scalar{H}{x(s)}{f(s)}\, ds
\\[10pt]
&\hsp+\int_0^t  \norma{H}{y(s)}{H}^2\,ds+\int_0^t  \scalar{H}{x(s)}{g(s)\,dW(s)}.
\end{split}
\label{eqn-3.11}
\end{equation}
Moreover if $\Psi$ is defined as in \eqref{eqn-Psi-energy functional}, then, for every $t \geq 0$, $\mathbb{P}$-almost surely,
\begin{align}\begin{aligned}
\Psi(z(t))&=\Psi(z(0))+ \int_0^t \scalar{H}{ y(s)}{f(s)}\, ds\\[10pt]
&\hslp +  \frac{1}{2} \int_0^t \Vert g(s)\Vert^2_{\mathscr{T}_2(K,H)}\,ds+  \int_0^t \scalar{H}{ y(s)}{g(s)\,dW(s)}. \label{eq-3.12}
\end{aligned}\end{align}
\end{Lemma}

\begin{proof}[\textbf{Proof of Proposition \ref{proposition 3.1}
}] Let us  fix $k \in \mathbb{N}$.
We will show  that the processes $\varphi$ and $\psi$  defined by
\[\varphi(t)=\frac{1}{2}\bigl( \norma{H}{ u(t)}{H}^2-1\bigr), \ \ \ \  t\in [0, \tau),\]
and
\[\psi(t)=\scalar{H}{u(t)}{v(t)}=\scalar{H}{u(t)}{u_t(t)}, \ \ \ \ t\in [0,\tau),\]
satisfy the  following system of linear stochastic differential equation
\begin{equation}
\label{eq-3.13-psi}
\left\{\begin{split}
d\varphi(t)&=\psi(t)\,dt \\[10pt]
d\psi(t)&+\gamma\,d\varphi(t)=\alpha(t)\varphi(t)\,dt-2 \scalar{H}{ u(t)}{ \sigma_0(z(s)) dW(t)}\varphi(t),\;\;\; t\in [0, \tau),
\end{split}\right.
\end{equation}
for an appropriate process $\alpha(t)$ defined by
\[
\alpha(t)= 2\norma{H}{ A_0u(t)}{H}^2 - 2\norma{H}{ v(t)}{H}^2 - \scalar{H}{u(t)}{ \tr_K\bigl[\partial_v \sigma_0(z(t)) \sigma(z(t))\bigr] }.
\]
Since   $\varphi(0)=0$ and   $\psi(0)=\varphi_t(0)=0$, this implies that   $\varphi(t)=\psi(t)= 0$, and Proposition \ref{proposition 3.1} follows.
Thus, it is sufficient to prove  \eqref{eq-3.13-psi}.

Let us observe that the process $(u(t), v(t))$, $t\geq 0$,  is a solution for system \eqref{eq-3.10}, with
$x=u$, $y=v$,  $g=\newsigma(u,v)$ and with
\[f(s)= -\norma{H}{ v(s)}{H}^2u(s) +\norma{H}{ A_0u(s)}{H}^2-\gamma v(s) +\frac12 \tr_{K} \bigl[\partial_v\newsigma(u(s),v(s))\newsigma(u(s),v(s))\bigr], \ \ \ s\geq 0.\]
Thanks to identity \eqref{eqn-3.11} in Lemma \ref{lem-Ito} and  Lemma \ref{lemma 3.2}, we have
\[
 \begin{split}
\psi(t)-\psi(0)&=
 \int_0^t \bigl(  \norma{H}{ v(s)}{H}^2- \norma{H}{ A_0u(s)}{H}^2\bigr)\, ds \\[10pt]
 &\hsl+ \int_0^t \scalar{H}{u(s)}{-\norm{v(s)}{H}^2 u(s)+ \norma{H}{ A_0u(s)}{H}^2u(s)-\gamma v(s) }\, ds\\[10pt]
& \hsl+\frac{1}{2}\int_0^t \scalar{H}{u(s)}{ \tr_K\bigl[ \partial_v \newsigma(z(s))\newsigma(z(s))]}\, ds
+\int_0^t \scalar{H}{u(s)}{\newsigma(z(s))\,dW(s)} \\[10pt]
 &=\int_0^t  \Bigl( \norma{H}{ A_0u(s)}{H}^2 - \norma{H}{ v(s)}{H}^2 \Bigr) \Bigl( \norma{H}{ u(s)}{H}^2-1\Bigr)\, ds   -\gamma \int_0^t \scalar{H}{u(s)}{ v(s) } \, ds
 \\[10pt]
 &\hsl- \int_0^t \bigl(\norma{H}{ u(s)}{H}^2-1\bigr) \scalar{H}{u(s)}{ \sigma_0(z(s)) \,dW(s)}\\[10pt]
 &\hsl- \frac{1}{2}\int_0^t \scalar{H}{u(s)}{ \tr_K\bigl[\partial_v \sigma_0(z(s)) \sigma(z(s))\bigr] } \bigl(\norma{H}{ u(s)}{H}^2-1\bigr)\, ds
\\[10pt]
 &=\int_0^t  \Bigl( 2\norma{H}{ A_0u(s)}{H}^2 - 2\norma{H}{ v(s)}{H}^2 - \scalar{H}{u(s)}{ \tr_K\bigl[\partial_v \sigma_0(z(s)) \,\sigma(z(s))\bigr] }\Bigr) \varphi(s) \, ds\\[10pt]
 &\hsl-\gamma\int_0^t\psi(s)\,ds-2\int_0^t\langle u(s),\sigma_0(z(s))dW(s)\rangle_H\varphi(s).
\end{split}
 \]
Hence  equality \eqref{eq-3.13-psi} follows.
\end{proof}

	Our next task is to show that the local maximal solution is in fact a global one.
	\begin{Proposition} \label{proposition 3.5}
		We have
		\begin{equation}  \label{zs2}\mathbb{P}\left(\tau=\infty\right)=1.\end{equation}	\end{Proposition}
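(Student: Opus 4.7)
The plan is to implement the Hasminski-type argument indicated in the strategy outline at the end of Section 2, exploiting the energy functional $\Psi$ together with the manifold invariance already established in Proposition \ref{proposition 3.1}. First I would introduce the standard localising stopping times
\[
\tau_n := \inf\{ t\in [0,\tau): \normb{\mathscr{H}}{z(t)} \geq n\} \wedge n \wedge \tau,
\]
which by definition of the maximal local solution satisfy $\tau_n \nearrow \tau$ $\mathbb{P}$-a.s. On $[0,\tau_n]$ the process $z(t)=(u(t),v(t))$ is a mild solution of the system \eqref{eq-3.10} with $x=u$, $y=v$, $g=\sigma(z)$, and
\[
f(s)= -\normb{H}{v(s)}^2 u(s) + \normb{H}{A_0 u(s)}^2 u(s) - \gamma v(s) + \tfrac12 \tr_K\bigl[\partial_v\sigma(z(s))\,\sigma(z(s))\bigr],
\]
so that Lemma \ref{lem-Ito} applies and yields the energy identity \eqref{eq-3.12} up to time $\tau_n$.

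The key simplification comes from combining this identity with the invariance property \eqref{eq-3.18}, which tells us that $\scalar{H}{u(s)}{v(s)}=0$ on $[0,\tau)$. Consequently,
\[
\scalar{H}{v(s)}{-\normb{H}{v(s)}^2u(s)+\normb{H}{A_0u(s)}^2 u(s)} = 0,
\]
so the cubic nonlinearities drop out entirely from the drift contribution $\scalar{H}{v(s)}{f(s)}$, leaving only $-\gamma\normb{H}{v(s)}^2$ together with the Stratonovich correction term, which by \eqref{eq-3.21} equals $\scalar{H}{v(s)}{\tr_K[\partial_v\sigma_0(z(s))\sigma(z(s))]}$ and is controlled by $C(1+\normb{\mathscr{H}}{z(s)}^2)$ thanks to Hypothesis \ref{assumption-sigma_0}. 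Similarly, by \eqref{eqn-growth tilde sigma-1-H} and Remark \ref{rem-Stratonovich}, $\Vert \sigma(z(s))\circ i\Vert^2_{\mathscr{T}_2(K,H)} \leq C(1+\normb{\mathscr{H}}{z(s)}^2)$. Because $\normb{H}{u(s)}=1$ on $\mathscr{M}$, we have the crucial identity $\normb{\mathscr{H}}{z(s)}^2 = 2\Psi(z(s))+1$, so every non-martingale contribution on the right-hand side of \eqref{eq-3.12} can be bounded by $C(1+\Psi(z(s)))$.

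Taking expectations (the stochastic integral is a true martingale when stopped at $\tau_n$, since the integrand is bounded on $[0,\tau_n]$) gives
\[
\E\,\Psi(z(t\wedge \tau_n)) \leq \Psi(z_0) + C\int_0^t \bigl(1 + \E\,\Psi(z(s\wedge \tau_n))\bigr)\,ds,
\]
and Gronwall's lemma yields $\sup_n \E\,\Psi(z(t\wedge \tau_n)) \leq C(t,z_0) < \infty$ for every $t>0$. On the event $\{\tau_n \leq t\}$ one has, by the maximality definition, $\normb{\mathscr{H}}{z(\tau_n)} \geq n$ (for $n$ large), hence $\Psi(z(t\wedge \tau_n)) \geq (n^2-1)/2$ on that event. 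Chebyshev's inequality therefore yields
\[
\P(\tau_n \leq t) \leq \frac{2\,C(t,z_0)}{n^2-1} \longrightarrow 0, \qquad n\to\infty,
\]
and since $\tau_n \nearrow \tau$, we conclude $\P(\tau \leq t)=0$ for every $t>0$, which gives \eqref{zs2}.

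The main obstacle I would expect is conceptual rather than technical: the cubic nonlinearities $\normb{H}{v}^2 u$ and $\normb{H}{A_0 u}^2 u$ alone would prevent any such energy estimate in the Hilbert-space setting and would typically produce finite-time blow-up. The point is that these terms are not arbitrary corrections but precisely the projections onto the normal bundle of $\mathscr{M}$, and consequently they are parallel to $u$ and orthogonal to $v$ on $\mathscr{M}$. This is why establishing Proposition \ref{proposition 3.1} first is indispensable: only after knowing $\scalar{H}{u}{v}=0$ can one discard the cubic contributions in the energy balance and close the Gronwall argument. A secondary technical point to verify is that the stochastic integral stopped at $\tau_n$ is a genuine martingale, which follows from the cubic-growth bound \eqref{eqn-growth tilde sigma-1-H} together with the boundedness of $\normb{\mathscr{H}}{z}$ on $[0,\tau_n]$.
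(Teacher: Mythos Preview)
Your proposal is correct and follows essentially the same route as the paper: localise by stopping times, apply the It\^o identity \eqref{eq-3.12} to the energy functional $\Psi$, use the invariance from Proposition \ref{proposition 3.1} to kill the cubic drift contributions, bound the remaining terms by $c(1+\Psi)$ via Hypothesis \ref{assumption-sigma_0} and Lemma \ref{lemma 2.2}, and close with Gronwall plus Chebyshev. The only cosmetic differences are your explicit use of the identity $\normb{\mathscr{H}}{z(s)}^2 = 2\Psi(z(s))+1$ on $\mathscr{M}$ and your slightly different definition of $\tau_n$ (with the extra $\wedge n$), neither of which affects the argument.
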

	\begin{proof}
We define the following stopping times
\[
\tau_k := \inf\{t\in[0,\tau) : |z(t)|_{\mathscr{H}} \geq k\},\;\;\; \ k \in\,\mathbb{N}. \]
According to our definition, if $|z(t)|_{\mathscr{H}} < k$, for every $t \in [0,\tau)$, then $\tau_k=\tau$. The sequence of stopping times $\{\tau_k\}_{k \in\,\mathbb{N}}$ is non decreasing and
\[\tau^\star:=\lim_{k\to\infty}\tau_k\leq \tau.\]
Thus, if we show that for every $t\geq 0$
\begin{equation}
\label{zs3}\lim_{k\to\infty}\mathbb{P}\left(\tau_k\leq t\right)=0,\end{equation}
we conclude that
\[
\mathbb{P}\left(\tau<\infty\right)\leq \mathbb{P}\left(\tau^\star<\infty\right)=0,
	\]
and \eqref{zs2} follows.

	We apply  the It\^o Lemma \ref{lem-Ito-final} to the function $\Psi$ defined  in \eqref{eqn-Psi-energy functional},  see   Lemma \ref{lem-Ito}.
	Since  the  process
	$z(t)=(u(t),v(t))$, $t\in[0,\tau)$
	 is a local solution to problem
\eqref{eq-2.26},
 we infer that
	\begin{align*}
	&d\Psi(z(t))= \scalar{H}{v(t)}{\norma{H}{A_0u(t)}{H}^2u(t)-\norma{H}{ v(t)}{H}^2u(t)-\gamma v(t)} \,dt \\[10pt]
&+ \scalar{H}{v(t)}{\newsigma(z(t))\,dW(t)}+\frac{1}{2} \scalar{H}{v(t)}{\tr_K\bigl[\partial_v\newsigma(z(t))\,\newsigma(z(t))]}\,dt +\frac{1}{2}\Vert \newsigma(z(t))\Vert^2_{\mathscr{T}_2(K,H)}\,dt.
	\end{align*}
	Hence, thanks to \eqref{eq-3.18}  we have
	\begin{align*}\begin{split}
	d\Psi(z(t))&+\gamma \norma{H}{ v(t)}{H}^2\,dt=\scalar{H}{v(t)}{\newsigma(z(t))\,dW(t) } \\[10pt]
	&\hslp+\frac{1}{2}\scalar{H}{v(t)}{\tr_K\bigl[\partial_v\newsigma(z(t))(\newsigma(z))]}\, dt+\frac{1}{2}
\Vert \newsigma(z)\Vert_{\mathscr{T}_2(K,H)}^2\,dt. \end{split}
	\end{align*}
Next note that since $z(t) \in \mathscr{M}$, in view of   \eqref{eq-3.20} and \eqref{eq-3.21}, we have
	\[
\scalar{H}{v(t)}{\newsigma(z(t))\,dW(t) }= \scalar{H}{v(t)}{\sigma_0(z(t))\,dW(t) },\]
and
\[\scalar{H}{v(t)}{\tr_K\bigl[\partial_v\newsigma{v}(z(t))\newsigma(z(t))]}=\scalar{H}{v(t)}{\tr_K\bigl[\partial_v\sigma_0(z(t))\,\newsigma(z(t))]},
\]
so that
	\begin{align*}\begin{split}
	d\Psi(z(t))&+\gamma \norma{H}{ v(t)}{H}^2\,dt=\scalar{H}{v(t)}{\sigma_0(z(t))\,dW(t) } \\[10pt]
	&+\frac{1}{2}\scalar{H}{v}{\tr_K\bigl[\partial_v\sigma_0(z(t))\,\newsigma(z(t))]}\, dt+\frac{1}{2}
\Vert \newsigma(z(t))\Vert_{\mathscr{T}_2(K,H)}^2\,dt. \end{split}
	\end{align*}
Let us also observe that since $z(t) \in \mathscr{M}$, from inequality \eqref{eqn-growth tilde sigma-1-H} in Lemma \ref{lemma 2.2} we deduce that
\[
\frac12 \Vert \newsigma(z(t))\Vert_{\mathscr{T}_2(K,H)}^2 \leq  L(1+\norm{z(t)}{\mathscr{H}}^2),
\]
and, by assumption \eqref{eqn-Stratonovich correction term}, there exists a constant $c>0$ such that
\[
\frac{1}{2}\scalar{H}{v}{\tr_K\bigl[\partial_v\sigma_0(z(t))\newsigma(z(t))]}\, \leq c(1+\norm{z(t)}{\mathscr{H}}^2).
\]

Thus, if we put together  all the estimates above,  we deduce that for $ t <\tau$
\begin{align} \label{eq-3.31}
\Psi(z(t))&\leq  \Psi(z(0)) + \int_0^t \scalar{H}{v(s)}{\sigma_0(z(t))\,dW(s) } +c  \int_0^t \bigl( 1+ \Psi(z(s))\bigr)\,ds.
	\end{align}
In particular, by taking the expectation of both sides of the stopped version of \eqref{eq-3.31}, we infer that   for every $ k\in \mathbb{N}$
\[
\mathbb{E}\bigl(1+\Psi(z(t\wedge \tau_k))\bigr)\leq 1+\psi(z(0))+c\, \mathbb{E}\int _0^{t} \bigl(1+ \Psi(z(s\wedge \tau_k)\bigr)\, ds.
\]
As a consequence of Gronwall's Lemma, this gives
\begin{equation}
 \mathbb{E}\,\bigl(1+\Psi(z(t\wedge \tau_k))\bigr)
 \leq \bigl(1+\Psi(z_0)\bigr) e^{c\,t}, \ \ \  t\geq 0.
\end{equation}
Now, since $\norma{H}{ u(t\wedge \tau_k)}{H}=1$,  this implies that
\[\mathbb{E}\,\norm{ z(t\wedge \tau_k)}{\mathscr{H}}^2\leq c(t),\ \ \ \ \ t\geq 0,\]
so that
\[\lim_{k\to\infty} \mathbb{P}\left(\tau_k<t\right)=0.\]
As we have explained above, this yields \eqref{zs2}.
\end{proof}

The proof of Proposition \ref{proposition 3.5} completes the proof of Theorem \ref{theorem 2.5}.
Thus, we only need to prove Theorem \ref{thm-2.7-regular data}. However, its proof  is very similar to the one of Theorem \ref{theorem 2.5} and for this reason we will only sketch it.

\medskip

 We  fix  $(u_0,v_0)\in \mathscr{M} \cap \mathscr{H}_1$.
Since  we are assuming that $\sigma_0(z) \in\,\mathscr{L}(E,D(A_0))$, if $z \in\,\mathscr{H}_1$,
arguing as in the proof of   Lemma \ref{lemma 2.2}  we can  show the map $\sigma:\mathscr{H}_1\rightarrow \newspace$
is Lipschitz-continuous on balls and of cubic growth. Moreover, arguing as in the proof of
Proposition \ref{prop-assumtopn-sigma}, we can prove that the  map $\newsigma$ is of $C^1$-class in the sense of  Hypothesis \ref{assumption-sigma_0} and
the function
\[\mathscr{H}_1 \ni z \mapsto \tr_{K} \bigl(\partial_v\newsigma(z)\newsigma(z) \bigr) \in D(A_0),\]
is also Lipschitz-continuous on balls and of polynomial  growth. Finally, it follows trivially from
Proposition \ref{prop-nonlinearities},  that the function
\[F:\mathscr{H}_1\ni z=(u,v) \mapsto \norma{H}{ v}{H}^2u + \norma{H}{ A_0u}{H}^2u  \in D(A_0),\]
is  Lipschitz-continuous on balls and of polynomial growth.
Therefore,  by  proceeding in a standard way (compare, for example, \cite[Theorem 1.5]{Seidler_1993}  or
\cite[Theorem 4.10]{Brz_1997}), we can find a unique maximal local mild solution  $z(t)=\bigl(u(t),v(t)\bigr)$, defined for $t< \xi$.   In what follows we will prove that
\begin{equation}  \label{zs5}
\mathbb{P}\left(\xi=\infty\right)=1.\end{equation}
For this aim, we define the following sequence of stopping times
\[
\xi_k := \inf\{t\in[0,\tau) : |z(t)|_{\mathscr{H}_1} \geq k\},\ \ \ \ k \in\,\mathbb{N}. \]

To do this we could  follow the proof of Theorem \ref{theorem 2.5}.  But an easier way is available since by the uniqueness of solutions guaranteed  by Theorem \ref{theorem 2.5} we have

\begin{equation}\label{eqn-z=bf z}
z(t)=\mathbf{z}(t), \;\; t< \xi,
\end{equation}
where $\mathbf{z}$ is the unique global solution from Theorem \ref{theorem 2.5}.

Thus we only need to prove a counterpart of  Proposition \ref{proposition 3.5}, i.e.
	the local maximal solution $z(t)=(u(t),v(t))$, $t<  \xi$, is a global one, i.e. $\xi=\infty$, $\mathbb{P}$-almost surely.
		The proof of this fact follows once we first  apply the  It\^o Lemma \ref{lem-Ito-final} to the following modification of  the function $\Psi$ defined  in \eqref{eqn-Psi-energy functional},
\[	\Phi(z):= \frac{1}{2} \Bigl[\norma{H}{ A_0^2u}{H}^2+\norma{H}{ A_0v}{H}^2\Bigr], \;\;\;z=(u,v)\in \mathscr{H}_1,
	\]
and then apply the Gronwall Lemma. This allows to show that for every fixed $t\geq 0$
\[\lim_{k\to\infty}\mathbb{P}\left(\xi_k\leq t\right)=0,\] and this implies \eqref{zs5}.

\section{The small mass limit: notations, assumptions and main results }
\label{sec4}

Let $D$ be a bounded and smooth domain in $ \mathbb{R}^d$, with $d\geq 1$, and let $H$ denote the Hilbert space $L^2(D)$, endowed with the usual scalar product $\scalar{H}{\cdot}{\cdot}$ and the corresponding norm $\normb{H}{\cdot}$. It is well known that, if $\Delta$ is the Laplace operator on the domain $D$, endowed with the Dirichlet boundary conditions, then there exists a complete orthonormal system $\{e_j\}_{j \in\,\mathbb{N}}\subset H$ and a non-decreasing divergent sequence of positive real numbers $\{\alpha_j\}_{j \in\,\mathbb{N}}$, such that
\[\Delta e_j=-\alpha_j e_j,\ \ \ \ k \in\,\mathbb{N}.\]
For every $\beta \in\,\mathbb{R}$, we denote by $H^\beta$ the  space $D((-\Delta)^\beta)$, endowed with the norm
\[\norm{x}{H^\beta}^2:=\vert (-\Delta)^\beta x\vert_{H}^2=\sum_{j=1}^\infty \alpha_j^\beta|\scalar{H}{ x}{e_j}|^2,
\]and we set  $\mathscr{H}_\beta:=H^{\beta+1}\times H^{\beta}$. When $\beta=0$, we simply denote  $\mathscr{H}_0$ by  $\mathscr{H}$. Moreover, we denote by $M$ is the unit sphere in $H$
\[M=\left\{u \in\,H\ :\,\norma{H}{ u}{H}=1\right\}.\]
Notice that by using interpolation for every $0\leq \vartheta <\varrho$ and $u \in\,H^\varrho \cap M$ we have
\begin{equation}
\label{jan2}
\norm{u}{H^\vartheta}\leq \norm{u}{	H^\varrho}^{\vartheta/\varrho}\, \norma{H}{ u}{H}^{1-\vartheta/\varrho}=\norm{u}{	H^\varrho}^{\vartheta/\varrho}.
\end{equation}

Throughout the rest of this paper, we will consider  the following class of stochastic damped wave equations on $D$

\begin{equation}\label{eq1-infin}\left\{
\begin{array}{l}
\displaystyle{\mu\, \partial_t^2u_\mu(t,\xi)+\mu\norma{H}{ \partial_t u_\mu(t)}{H}^2u_\mu(t,\xi)}\\[10pt]
\displaystyle{\hspace{+1.5truecm}=\Delta u_\mu(t,\xi)+\normb{H}{ \nabla u_\mu(t)}^2u_\mu(t)-\gamma\, \partial_t u_\mu(t,\xi)+\sigma(u_\mu(t))\,\partial_t \WP(t,\xi)}\\[10pt]
\displaystyle{u_\mu(0,\xi)=u_0(\xi),\;\;\;\partial_t u_\mu(0,\xi)=v_0(\xi),\;\;\; \ u_\mu(t,\xi)=0,\ \ \xi \in\,\partial D,}\end{array}
\right.\end{equation}
depending on a positive parameter $\mu$. Here, $\gamma$ is a positive constant, $\WP$ is a  Wiener process on $H$ and the mapping $\sigma:H^1\to\mathscr{L}(H)$ is such that  $\sigma(u)$ projects $H$ onto $T_uM$, for every $u \in\,H^1$. Namely, as in  \eqref{eqn-sigma-def}
\[\sigma(u)=\sigma_0(u)-\langle \sigma_0(u),u\rangle_H u,\]
for some mapping $\sigma_0:H^1\to \mathcal{L}(H)$.

In this section, as well as in all following sections, we assume that $\sigma_0$ depends only on the first component, i.e. the domain of $\sigma_0$  is  $H^1$ and not $H^1\times H$ as in the previous sections. This stronger framework is precisely the one described in Remark \ref{remark 2.3} part (ii) (here we have decided to use the symbol $\sigma_0$ and not $g_0$).
Moreover, this framework has the following consequence. The It\^o-Stratonovich correction term $\tr_{K} \bigl[ \partial_v \sigma(z)\,\sigma(z) \bigr]$, where $\sigma$ is defined in \eqref{eqn-sigma-def}, see also
\eqref{eqn-sigma-def-2}, is equal to $0$. Hence, there is no need of introducing a Banach space $E$ in which the Wiener process takes values. We may simply consider a cylindrical Wiener process $w^Q$ on some separable Hilbert space $K$, called the reproducing kernel Hilbert space.
If this Wiener process takes values in $H$, then its  covariance operator $Q$ belongs to $\mathscr{L}^+(H)$, the space of non-negative and symmetric  operators of  trace class. Note that in this case $K=Q(H)$, so  that $\WP(t,\xi)$ can be formally written as  the sum
\[\WP(t,\xi)=\sum_{j=1}^\infty \tilde{e}_j(\xi)\beta_j(t),\ \ \ \ \ \  t\geq 0,\ \ \ \ \ \xi \in\,D,\]
where  $\{\tilde{e}_j\}_{j \in\,\mathbb{N}}$  is an orthonormal  basis   of $K$
and $\{\beta_k\}_{j \in\,\mathbb{N}}$ is a sequence of mutually independent Brownian motions, all defined on the same stochastic basis $(\Omega, \mathscr{F}, \{\mathscr{F}_t\}_{t\geq 0},\mathbb{P})$. Since $K=Q(H)$ and
\[\scalar{K}{ Q u}{Q^{-1} v}=\scalar{H}{ u}{v},\;\;\; u,v \in\,H, \]
 we can assume that
\[
\tilde{e}_j=Q e_j, \;\; j \in \mathbb{N},
\]
where  $\{e_j\}_{j \in\,\mathbb{N}}$  is an orthonormal basis of $H$. We may assume, although this is not necessary,  that $\{e_j\}_{j \in\,\mathbb{N}}$ diagonalizes the Laplacian $\Delta$.

\dela{  Notice that the reproducing kernel $K$ of the cylindrical Wiener process $\WP$ is a Hilbert space, endowed with the scalar product
Thus, if $\{e_j\}_{j \in\,\mathbb{N}}$ is a complete orthonormal system in $H$, then $\{Qe_j\}_{j \in\,\mathbb{N}}$ is a complete orthonormal system in $K$.
}

\medskip

In what follows we assume a modified version of Hypothesis \ref{assumption-sigma_0-1}, namely we assume that the function $\sigma_0$ depends only on the first variable, see Remark \ref{remark 2.3}(2), where we used an auxiliary notation $g_0$. Since $\sigma_0$ depends only on the first variable, we can relax the assumption by replacing the space $\mathscr{L}(E,H)$
 by the space $\mathscr{T}_2(K,H)$.

\begin{Hypothesis}
\label{hyp-H3} The function $\sigma_0:H^1\to \mathscr{T}_2(K,H)$ is  Lipschitz on balls and
\begin{equation}\label{eqn-c_0}
  \sup_{u \in\,H^1}\Normb{\mathscr{T}_2(K,H)}{\sigma_0(u)} <\infty.
\end{equation}
If $u \in H^2$, then $\sigma_0(u) \in \mathscr{T}_2(K,H^1)$ and the corresponding function
$\sigma_0:H^2\to \mathscr{T}_2(K,H^1)$ is  Lipschitz on balls and
\begin{equation}\label{eqn-c_1}
  \sup_{u \in\,H^2}\Normb{\mathscr{T}_2(K,H^1)}{\sigma_0(u)} <\infty.
\end{equation}
 \end{Hypothesis}
We also assume  the following strengthening of Hypothesis  \ref{hyp-H3}.
		\begin{Hypothesis}\label{hyp-H4}
The function $\sigma_0:H^1\to \mathscr{T}_2(K,H)$ satisfies the following condition
	\begin{equation}  \label{sz102}
	\sup_{u \in\,H^1}\Vert \sigma_0(u)\Vert_{\mathscr{T}_2(K,H)}\,\left(1+\norm{u}{H^1}^2\right)<\infty.\end{equation}
	\end{Hypothesis}

\begin{Remark}\label{not-sigma_0^ast}
{\em In what follows,  by $\sigma_0^\ast(u) \in \mathscr{T}_2(H,K) \subset \mathscr{L}(H,K) $ we will understand the Hilbert adjoint of the operator  $\sigma_0(u) \in \mathscr{T}_2(K,H) \subset \mathscr{L}(K,H)$. \\
Note that in view of  Hypothesis \ref{hyp-H3} the map
\begin{align}
\sigma_0^\ast: H^1 \to  \mathscr{T}_2(H,K) \subset \mathscr{L}(H,K)
\end{align}
is bounded.
Moreover,  for every $ u \in H^1$,
\begin{align}\label{eqn-c_0**}
  &\sup_{u \in\,H^1}\Normb{\mathscr{L}(K,H)}{\sigma_0(u)} <\infty, \ \ \ \ \ \
  \sup_{u \in\,H^1}\Normb{\mathscr{L}(H,K)}{\sigma_0^\ast(u)}<\infty.
\end{align}}
\end{Remark}

We have already seen that
the diffusion coefficient $\sigma$ is given by
\begin{equation}\label{eqn-sigma-def-2}
\sigma(u)h=\sigma_0(u)h-\scalar{H}{ \sigma_0(u)h}{u}u,\ \ \ \ u \in\,H^1,\ \ \ \ h \in\,K.
\end{equation}
In what follows we will use the following useful notation
\begin{equation}\label{eqn-sigma1-def-2}
\sigma_1(u)h:=\scalar{H}{ \sigma_0(u)h}{u}u,\ \ \ \ u \in\,H^1,\ \ \ \ h \in\,K,
\end{equation}
so that \[\sigma(u)=\sigma_0(u)-\sigma_1(u),\ \ \ \ \ u \in\,H^1.\]

We will also assume the following  additional hypothesis.

		\begin{Hypothesis}\label{hyp-H5}
If $u \in H^3$, then $\sigma_0(u) \in \mathscr{T}_2(K,H^2)$ and there exists $c>0$ such that
	\begin{equation}
	\label{sz191}
	\Vert \sigma_0(u)\Vert_{\mathscr{T}_2(K,H^2)}\leq c\left(1+\norm{u}{H^2}\right),\ \ \ \ u \in\,H^3.
	\end{equation}
\end{Hypothesis}

\dela{Something is wrong? }
\dela{	
	Notice, that thanks to Hypothesis \ref{hyp-H5} we have that $\sigma_0(u)Q \in\,\mathscr{L}(K,H^2)$, for every $u \in\,H^2$, and
	\begin{equation}   \label{sz220-bis}\Vert \sigma_0(u)Q\Vert_{ \mathscr{L}(K,H^2)}\leq \Vert \sigma_0(u)Q\Vert_{ \mathscr{T}_2(K,H^2)}\leq c\,\left(1+\norm{u}{H^2}\right),
	\end{equation}
so that for every $h \in\,H$ and $u \in\,H^2\cap M$ and $v \in\,H^2$

\coma{
\begin{equation}
\label{sz180-bis}
\left|\scalar{H^2}{ \sigma(u)Qh}{v}\right|\leq c\left(1+\norm{u}{H^2}\right)\vert v\vert_{H^2}\,\norma{H}{ h}{H}.
\end{equation}
}}

\begin{Remark}
{\em Assume that for every $u \in\,H^1$
\[\sigma_0(u)\tilde{e}_k=\lambda_k(\norm{u}{H^1})e_k,\ \ \ \ k \in\,\mathbb{N},\]
for some mappings $\lambda_k:[0,\infty)\to\mathbb{R}$ such that
for every $R>0$
\[r_1, r_2 \in\,[0,R]\Longrightarrow 	\vert \lambda_k(r_1)-\lambda_k(r_2)\vert\leq c_{R,k}\,\vert r_1-r_2\vert.
\]
For every $\delta\geq 0$ and $u \in\,H^1$ we have
\[\begin{array}{l}
\ds{\Vert \sigma_0(u)\Vert^2_{\mathscr{T}_2(K,H^\delta)}=\sum_{k=1}^\infty \vert \sigma_0(u)\tilde{e}_k\vert_{H^\delta}^2=\sum_{k=1}^\infty\lambda_k^2(\norm{u}{H^1})\alpha_k^\delta,}
\end{array}\]
and
for every $u_1, u_2 \in\,B_R(H^1)$, we have
\[\begin{array}{l}
\ds{\Vert \sigma_0(u_1)-\sigma_0(u_2)\Vert^2_{\mathscr{T}_2(K,H^1)}=\sum_{k=1}^\infty \vert \left[\sigma_0(u_1)-\sigma_0(u_2)\right]Q e_k\vert_{H^1}^2}\\[14pt]
\ds{=\sum_{k=1}^\infty \left[\lambda_k(\vert u_1\vert_{H^1})-\lambda_k(\vert u_2\vert_{H^1})\right]^2\alpha_k\leq \sum_{k=1}^\infty c_{R,k}^2\alpha_k.}	\end{array}\]
In particular, if
\[\Lambda_1:=\sup_{r\geq 0}\sum_{k=1}^\infty \lambda_k(r)\alpha_k<\infty,\ \ \ \ c_R:=\sum_{k=1}^\infty c_{R, k}^2\,\alpha_k<\infty,\]
we have that
\[\sup_{u \in\,H^1}\Vert \sigma_0(u)\Vert^2_{\mathscr{T}_2(K,H^1)}\leq \Lambda_1,\]
and
\[u_1, u_2 \in\,B_R(H^1)\Longrightarrow
\Vert \sigma_0(u_1)-\sigma_0(u_2)\Vert^2_{\mathscr{T}_2(K,H^1)}\leq c_R,\]
so that
Hypothesis \ref{hyp-H3} follows.
Next, if we assume
\[\Lambda_2:=\sup_{r\geq 0}\frac 1{1+r^4}\sum_{k=1}^\infty \lambda^2_k(r)\alpha_k^2<\infty,\]
due to \eqref{jan2} we have
\[\Vert \sigma_0(u)\Vert^2_{\mathscr{T}_2(K,H^2)}=\sum_{k=1}^\infty\lambda_k^2(\norm{u}{H^1})\alpha_k^2\leq \Lambda_2\left(1+\norm{u}{H^1}^4\right)\leq \Lambda_2\left(1+\norm{u}{H^2}^2\right).\]
Moreover, if we assume
\[\Lambda_3:=\sup_{r\geq 0}\sum_{k=1}^\infty \lambda^2_k(r)(1+r^4)<\infty,\]
we get
\[\sup_{u \in\,H^1}\Vert \sigma_0(u)\Vert^2_{\mathscr{T}_2(K,H)}\left(1+\norm{u}{H^1}^4\right)\leq \Lambda_3.\]
All this implies that Hypotheses \ref{hyp-H4} and \ref{hyp-H5} hold. \hfill\(\Box\)}	\end{Remark}

The following result (as well as its proof) is similar to Lemma \ref{lemma 2.2}.

\begin{Lemma}\label{lem-basic estimates on sigma}
Assume that $K$  is a separable Hilbert space.
Assume that the functions $\sigma$ and $\sigma_1$ are defined by formulae  \eqref{eqn-sigma-def-2} and \eqref{eqn-sigma1-def-2} respectively, where
$\sigma_0:H^1 \to \mathscr{T}_2(K,H)$.
\begin{itemize}
  \item[1.]
  If $u \in H^1$, then
\begin{equation*}
\Vert \sigma(u)\Vert_{\mathscr{T}_2(K,H)}^2 \leq \Vert \sigma_0(u)\Vert^2_{\mathscr{T}_2(K,H)},
\end{equation*}
and, if the map $\sigma_0:H^1 \to \mathscr{T}_2(K,H)$  satisfies \eqref{eqn-c_0},  then
\begin{equation}
	\label{sz5}
\sup_{u \in\,M\cap H^1}\Vert \sigma(u)\Vert_{\mathscr{T}_2(K,H)}<\infty.		\end{equation}
Moreover, if Hypothesis \ref{hyp-H4} holds, then
\begin{equation}
	\label{sz5-bis}
\sup_{u \in\,M\cap H^1}\Vert \sigma(u)\Vert_{\mathscr{T}_2(K,H)}\left(1+\vert h\vert_{H^1}^2\right)<\infty,		\end{equation}
\item[2.] If $X \subset H^1$ is a Hilbert space, then $\sigma_1$ maps $X$ into $\mathscr{T}_2(K,X)$ and for  every  $u\in X$,
\begin{align}
\Normb{\mathscr{T}_2(K,X)}{\sigma (u)}  & \leq
 \Normb{\mathscr{T}_2(K,X)}{\sigma_0 (u)} +
 \normb{X}{u}\normb{K}{ \sigma_0^\ast(u) u}.
\end{align}
In particular, for  every  $u\in X \cap M$,
\begin{align}\label{eqn-sigma_1-d}
\Normb{\mathscr{T}_2(K,X)}{\sigma (u)}  & \leq
 \Normb{\mathscr{T}_2(K,X)}{\sigma_0 (u)} +
 c\, \normb{X}{u}.
\end{align}

  \item[3.]
Under Hypothesis \ref{hyp-H3} there exists some $c>0$ such that
\begin{equation}
	\label{sz120}
	\Vert \sigma(u)\Vert_{\mathscr{T}_2(K,H^1)}\leq c\left(1+\norm{u}{H^1}\right),\ \ \ \ u \in\,H^2\cap M.
\end{equation}
\item[4.] Under Hypothesis \ref{hyp-H5}, there exists some $c>0$ such that
\begin{equation}
\label{sz190}	
\Vert \sigma(u)\Vert_{\mathscr{T}_2(K,H^2)}\leq c\left(1+\norm{u}{H^2}\right),\ \ \ \ \ \ u \in\,H^3\cap M.
\end{equation}
 \end{itemize}
\end{Lemma}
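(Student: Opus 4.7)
The plan is to prove the four parts in order, using the decomposition $\sigma=\sigma_0-\sigma_1$ from \eqref{eqn-sigma-def-2}--\eqref{eqn-sigma1-def-2} and exploiting in a crucial way that $\Pi_u$ is an orthogonal projection in $H$.

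For Part 1, I would fix an orthonormal basis $\{f_j\}_{j\in\mathbb{N}}$ of $K$ and, since $\sigma(u)=\Pi_u\circ\sigma_0(u)$ with $\Pi_u$ of operator norm at most $1$ in $\mathscr{L}(H)$, write
\[
\Vert\sigma(u)\Vert_{\mathscr{T}_2(K,H)}^2=\sum_{j=1}^\infty \normb{H}{\Pi_u\sigma_0(u)f_j}^2\leq\sum_{j=1}^\infty \normb{H}{\sigma_0(u)f_j}^2=\Vert\sigma_0(u)\Vert_{\mathscr{T}_2(K,H)}^2.
\]
The bounds \eqref{sz5} and \eqref{sz5-bis} then follow at once from \eqref{eqn-c_0} and Hypothesis \ref{hyp-H4}, respectively.

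Part 2 is the technical core. The key identity is an adjoint manipulation: for $u\in X$ and any ONB $\{f_j\}$ of $K$,
\[
\Vert\sigma_1(u)\Vert_{\mathscr{T}_2(K,X)}^2=\sum_{j=1}^\infty\bigl|\scalar{H}{\sigma_0(u)f_j}{u}\bigr|^2\,\normb{X}{u}^2 =\normb{X}{u}^2\sum_{j=1}^\infty\bigl|\scalar{K}{f_j}{\sigma_0^\ast(u)u}\bigr|^2=\normb{X}{u}^2\,\normb{K}{\sigma_0^\ast(u)u}^2,
\]
where in the last step I used Parseval's identity in $K$. This simultaneously shows that $\sigma_1(u)$ is Hilbert--Schmidt from $K$ into $X$ and gives the stated bound after applying the triangle inequality to $\sigma(u)=\sigma_0(u)-\sigma_1(u)$. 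For $u\in X\cap M$, the estimate \eqref{eqn-sigma_1-d} then follows because $\normb{H}{u}=1$ and $\sigma_0^\ast$ is uniformly bounded in $\mathscr{L}(H,K)$ by Remark~\ref{not-sigma_0^ast}, so $\normb{K}{\sigma_0^\ast(u)u}\leq c$.

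Parts 3 and 4 are essentially corollaries of Part 2 applied with $X=H^1$ and $X=H^2$, respectively. Under Hypothesis \ref{hyp-H3}, estimate \eqref{eqn-c_1} gives a uniform bound on $\Vert\sigma_0(u)\Vert_{\mathscr{T}_2(K,H^1)}$ for $u\in H^2$, and combining this with \eqref{eqn-sigma_1-d} (applied to $X=H^1$) yields $\Vert\sigma(u)\Vert_{\mathscr{T}_2(K,H^1)}\leq c(1+\norm{u}{H^1})$ on $H^2\cap M$. Similarly, under Hypothesis \ref{hyp-H5}, the pointwise bound \eqref{sz191} plus \eqref{eqn-sigma_1-d} with $X=H^2$ gives $\Vert\sigma(u)\Vert_{\mathscr{T}_2(K,H^2)}\leq c(1+\norm{u}{H^2})$ on $H^3\cap M$. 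The only mild obstacle is to make sure that $\sigma_0^\ast(u)u\in K$ is well-defined and has $K$-norm controlled uniformly on $M$, which is exactly what Remark~\ref{not-sigma_0^ast} provides; beyond that, the argument is a clean sequence of triangle inequalities and uses of the stated hypotheses.
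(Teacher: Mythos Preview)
Your proposal is correct and follows essentially the same route as the paper's own proof: the projection inequality for Part 1, the Parseval computation $\Vert\sigma_1(u)\Vert_{\mathscr{T}_2(K,X)}^2=\normb{X}{u}^2\normb{K}{\sigma_0^\ast(u)u}^2$ for Part 2, and then specializing Part 2 with $X=H^1$ (using \eqref{eqn-c_1}) and $X=H^2$ (using \eqref{sz191}) for Parts 3 and 4. The only cosmetic difference is that you invoke the operator-norm bound $\Vert\Pi_u\Vert_{\mathscr{L}(H)}\leq 1$ explicitly, whereas the paper just asserts the pointwise inequality $\normb{H}{\sigma(u)\tilde e_k}\leq \normb{H}{\sigma_0(u)\tilde e_k}$; note that strictly speaking both arguments use $\normb{H}{u}=1$ (so that $\Pi_u$ is genuinely an orthogonal projection), which is harmless since all applications are on $M$.
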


\begin{proof}
For every $u \in H$, we have
\begin{equation}\label{eqn-4.14}
\normb{H}{ \sigma(u)\tilde{e}_k}^2 \leq \normb{H}{ \sigma_0(u)\tilde{e}_k}^2, \ \ \ \ \ k \in\,\mathbb{N}.
\end{equation}
Hence,  by summing the expression above over $k \in\,\mathbb{N}$,  we obtain
\[\Vert \sigma(u)\Vert_{\mathscr{T}_2(K,H)}^2 \leq \Vert \sigma_0(u)\Vert^2_{\mathscr{T}_2(K,H)}
\]
and this implies \eqref{sz5} and, in case Hypothesis \ref{hyp-H4} holds, \eqref{sz5-bis}.

Next  if $u \in H$, by the Parseval identity in $K$ we get
\begin{align}
\Normb{\mathscr{T}_2(K,X)}{\sigma_1 (u)}^2 &=  \sum_{k=1}^\infty  \normb{X}{ \scalar{H}{\sigma_0(u)\tilde{e}_k}{u} u}^2
\\
 &=
\normb{X}{u}^2 \sum_{k=1}^\infty   \scalar{K}{\tilde{e}_k}{\sigma_0^\ast(u)  u}^2=  \normb{X}{u}^2 \normb{K}{ \sigma_0^\ast(u) u}^2,
 \end{align}
and, recalling that $\sigma(u)=\sigma_0(u)-\sigma_1(u)$, this proves \eqref{eqn-sigma_1-d}.

Now, if we assume  Hypothesis \ref{hyp-H3}, for any $u \in H^1$ \eqref{eqn-sigma_1-d} gives
\begin{align}
\Normb{\mathscr{T}_2(K,H^1)}{\sigma (u)} & \leq  c\left(1+\normb{H}{u} \normb{H^1}{u}\right),
\end{align}
and \eqref{sz120} follows.

Finally, under Hypothesis \ref{hyp-H5}, we have
\begin{align}
\Normb{\mathscr{T}_2(K,H^2)}{\sigma (u)}^2 & \leq   c\left(1+ \normb{H^2}{u}^2\right)+c\,\vert u\vert_H^2\normb{H^2}{u}^2,
\end{align}
and thus  \eqref{sz190} follows.

\dela{
    Next, if $u \in\,H^\delta\cap M$ (with $\delta=1,2$) and $k \in\,\mathbb{N}$, we have
\begin{equation} \label{sz221}
\begin{array}{ll}
\displaystyle{\normb{H^\delta}{\sigma(u)Q e_j}^2=}  &  \ds{\normb{H^\delta}{\sigma_0(u)Q e_j}^2-2
\scalar{H}{e_j}{ [\sigma_0(u)Q]^\star u}  \scalar{H}{ e_j}{ [\sigma_0(u)Q]^\star (-\Delta)^\delta u}}\\[14pt]
&\ds{+|\scalar{H}{ e_j}{ [\sigma_0(u) Q]^\star u}|^2\,\norm{u}{H^\delta}^2.}
\end{array}	
\end{equation}
We have
\begin{align} \begin{split}  \label{sz230}
\sum_{j=1}^\infty &\scalar{H}{  e_j}{ [\sigma_0(u)Q]^\star u} \scalar{H}{ e_j}{ [\sigma_0(u)Q]^\star (-\Delta)^\delta u}=\scalar{H^\delta}{ [\sigma_0(u)Q]	[\sigma_0(u)Q]^\star u}{u}\\[10pt]
&\leq \Vert \sigma_0(u)\Vert_{\mathscr{L}(K,H^\delta)}\norma{H}{[\sigma_0(u)Q]^\star u}{H}\norm{u}{H^\delta}\leq \Vert \sigma_0(u)\Vert_{\mathscr{L}(K,H^\delta)}\Vert [\sigma_0(u)Q]^\star\Vert_{\mathscr{L}(H)}\norm{u}{H^\delta}\\[10pt]
&\hsp\leq \Vert \sigma_0(u)\Vert_{\mathscr{L}(K,H^\delta)}\Vert \sigma_0(u)\Vert_{\mathscr{L}(K,H)}\norm{u}{H^\delta}.
\end{split}
\end{align}
Thus,  if we sum both sides in \eqref{sz221} with respect to $k \in\,\mathbb{N}$,  thanks again to \eqref{sz5} we obtain
\begin{align*}
\Vert \sigma(u)&\Vert_{\mathscr{T}_2(K,H^\delta)}^2  \\[10pt]
&\leq \Vert \sigma_0(u)\Vert_{\mathscr{T}_2(K,H^\delta)}^2+c\,\Vert \sigma_0(u)\Vert_{\mathscr{L}(K,H^\delta)}\Vert \sigma_0(u)\Vert_{\mathscr{L}(K,H)}\norma{H}{ u\vert_{H^\delta}+|[\sigma_0(u)Q]^\star u}{H}^2 \vert u\vert^2_{H^\delta}\\[10pt]
& \leq \Vert \sigma_0(u)\Vert_{\mathscr{T}_2(K,H^\delta)}^2+c\,\Vert \sigma_0(u)\Vert_{\mathscr{L}(K,H^\delta)}\Vert \sigma_0(u)\Vert_{\mathscr{L}(K,H)}\norm{u}{H^\delta}+c\, \Vert \sigma_0(u)\Vert_{\mathscr{L}(K,H)}^2\vert u\vert^2_{H^\delta}.
\end{align*}
	
	Now, if we take $\delta=1$ and only assume  the boundedness of the mapping $\sigma_0:H^1\to\mathscr{T}_2(K,H^1)$, we obtain \eqref{sz120}. Moreover, if we assume also
Hypothesis \ref{hyp-H4}
, we obtain \eqref{sz5-bis}.
	Finally, if we take $\delta=2$ and, in addition to Hypothesis \ref{hyp-H4}  assume also \eqref{sz191}, we 	get \eqref{sz190}.
}
\end{proof}

\medskip

Equation  \eqref{eq1-infin} can be rewritten as the system
\begin{equation}
\label{system}
\left\{
\begin{array}{l}
\ds{du_\mu(t)=v_\mu(t)\,dt}\\[10pt]
\ds{\mu dv_\mu(t)=\left[\Delta u_\mu(t)+\vert u_\mu(t)\vert_{H^1}^2u_\mu(t)	-\mu\norma{H}{ v_\mu(t)}{H}^2 u_\mu(t)-\gamma v_\mu(t)\right]\,dt}\\[10pt]
\ds{\hspace{+2truecm}+\sigma(u_\mu(t))\,d\WP(t),}\\[10pt]
\ds{u_\mu(0)=u_0,\ \ \ \ v_\mu(0)=v_0,\ \ \ \ u_\mu(t)_{|_{\partial D}}=0.}
\end{array}
\right.	
\end{equation}
Thus, if we take

\begin{align}
\label{eqn-6.02}
A_0^2&=-\Delta, \; H=L^2(D),\; D(A_0^2)= H^2(D),
\end{align}
we have
\begin{equation}
\normb{H}{A_0 u}= \normb{H}{\nabla u},
\end{equation}
and  problem \eqref{eq1-infin} is precisely problem \eqref{eqn-strong solution}. 
Moreover,  if $\sigma_0:H^1\to \mathscr{L}_2(K,H)$ has linear growth and is Lipschitz continuous, then Hypothesis \ref{assumption-sigma_0-1} is satisfied, and, since $\sigma_0$ is independent of $v$, Hypothesis \ref{assumption-sigma_0}  is  satisfied as well.
In particular, thanks to Theorem \ref{theorem 2.5} we have the following result.

\begin{Theorem}\label{thm-existence-example}
Assume that the function $\sigma_0:H^1 \rightarrow \mathscr{T}_2(K,H)$
has linear growth and is Lipschitz-continuous on balls. Then, for every  $z_0=(u_0,v_0)\in \mathscr{M}$,
there exists a unique solution to the stochastic constrained wave  equation \eqref{system}, i.e.
an   $ \mathscr{M}$-valued continuous   and an adapted process $z(t)=(u(t),v(t))$ such that
\begin{enumerate}
\item[1.]
the process $u$ has $M$-valued $C^1$ trajectories and
\[v(t)=\partial_tu(t),\ \ \ \ \ \ t \geq 0;\]
\item[2.] the process $z$ is a mild solution of equation \eqref{eq-2.26} with  initial conditions $(u_0,v_0)$, i.e.
for every $t\geq 0$, $\mathbb{P}$-almost surely,
\[
  \begin{split}
  z(t)&=\mathscr{S}_\mu(t)z_0+ \frac 1\mu\int_0^t \mathscr{S}_\mu(t-s) \bigl(0,-\mu\,\vert v(s)\vert^2u(s)+\vert \nabla u(s)\vert^2u(s)  -\gamma v(s)
   \bigr) \, ds
  \\[10pt]
  &\hsp+ \frac 1\mu\int_0^t \mathscr{S}_\mu(t-s) \bigl(0, \sigma(u(s)) \bigr)\,d\WP(s),
  \end{split}
\]
where   $\mathscr{S}_\mu=(\mathscr{S}_\mu(t))_{t\in  \mathbb{R}}$  is the
$C_0$ group  in $\mathscr{H}$ generated by $\mathscr{A}_\mu$.
\end{enumerate}
Moreover, the process $z$ satisfies the following energy equality, for $t\geq 0$, $\mathbb{P}$-almost surely,
\begin{equation}\label{eqn-energy equality}
\begin{split}
  \frac 12\,\norm{u(t)}{H^1}^2+\frac {\mu}2 \vert v(t)\vert^2_H&=\frac 12\,\vert u_0\vert_{H^1}^2+\frac {\mu}2 \vert v_0\vert^2_H-\gamma  \int_0^t \norma{H}{ v(s)}{H}^2\, ds
 \\[10pt]
 &\hs+ \int_0^t \scalar{H}{v(s)}{\sigma(u(s))\,d\WP(s)}+\frac{1}{2\mu } \int_0^t  \Vert \sigma(u(s))\Vert^2_{\mathscr{T}_2(K,H)}\,ds.
\end{split}
\end{equation}
Finally, if Hypothesis \ref{hyp-H3} is satisfied, and
if $z_0=(u_0,v_0)\in\,\mathscr{H}_1$,
  the above unique solution $z$ belongs to $C(\bigl[0,\infty\bigr);\mathscr{H}_1)$,
		  $\mathbb{P}$-almost surely.
		  \end{Theorem}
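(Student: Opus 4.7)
My plan is to deduce the theorem as a direct corollary of Theorems \ref{theorem 2.5} and \ref{thm-2.7-regular data}, once the concrete problem \eqref{system} has been matched with the abstract framework of Section \ref{sec-main}. The correspondence is given by \eqref{eqn-6.02}: we take $H=L^2(D)$ and let $A_0=\sqrt{-\Delta}$ with Dirichlet boundary, so that $D(A_0)=H^1_0(D)$, $D(A_0^2)=H^1_0(D)\cap H^2(D)$, and $\normb{H}{A_0 u}=\normb{H}{\nabla u}$ for every $u\in H^1_0(D)$. The presence of the mass parameter $\mu$ does not cause any new difficulty: as observed at the end of Section \ref{sec-main}, equation \eqref{eqn-SCWE-mass-2} has exactly the form of \eqref{eq-2.26} once one replaces the operator $\mathscr{A}$ and the group $\mathscr{S}$ by $\mathscr{A}_\mu$ and $\mathscr{S}_\mu$ (equivalently, $A_0$, $\gamma$ and $\sigma$ by $A_0/\sqrt{\mu}$, $\gamma/\mu$ and $\sigma/\mu$), and both of these rescaled coefficients continue to be of linear growth and Lipschitz on balls.

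The key simplifying observation is that $\sigma_0$ depends only on the first component $u$, so we are in the setting of Remarks \ref{remark 2.3}(2) and \ref{rem-assumption-sigma_0}. In particular, by \eqref{eqn-g-Stratonovich correction term}, $\tr_K[\partial_v\sigma(z)\sigma(z)]\equiv 0$ on $\mathscr{M}$, which means that Hypothesis \ref{assumption-sigma_0} is vacuous in the present case and the Stratonovich integral coincides with the It\^o integral. Hypothesis \ref{assumption-sigma_0-1} then reduces exactly to the standing assumption that $\sigma_0:H^1\to\mathscr{T}_2(K,H)$ is of linear growth and Lipschitz on balls. Applying Theorem \ref{theorem 2.5}, we immediately obtain, for every $z_0=(u_0,v_0)\in\mathscr{M}$, a unique $\mathscr{M}$-valued continuous adapted mild solution $z=(u,v)$ with $u\in C^1([0,\infty);M)$ and $v=\partial_t u$, and the mild formula \eqref{eqn-mild solution} with $\mathscr{S}$ replaced by $\mathscr{S}_\mu$ (and the $\mu$-rescaling of the coefficients) yields precisely item (2) of the statement.

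For the energy equality \eqref{eqn-energy equality}, I would specialize the abstract identity \eqref{eqn-energy-Psi} of Theorem \ref{theorem 2.5}. Since $\sigma_0$ does not depend on $v$, the directional derivative $\partial_v\sigma_0$ vanishes, so the term $\frac{1}{2}\int_0^t\scalar{H}{v(s)}{\tr_K[\partial_v\sigma_0(z(s))\sigma(z(s))]}\,ds$ in \eqref{eqn-energy-Psi} disappears. Unwinding the $\mu$-rescaling of the coefficients (so that the appropriate energy functional is now $\Psi_\mu(u,v)=\tfrac12(\normb{H}{\nabla u}^2+\mu\normb{H}{v}^2)$ and the diffusion coefficient that appears in the It\^o correction $\tfrac12\Vert\sigma(u)\Vert^2_{\mathscr{T}_2(K,H)}$ acquires the expected factor $1/\mu$), one recovers \eqref{eqn-energy equality} word for word.

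Finally, for the regularity statement, fix $z_0\in\mathscr{H}_1\cap\mathscr{M}$ and assume Hypothesis \ref{hyp-H3}. The second half of that hypothesis asserts precisely that the restriction $\sigma_0:H^2\to\mathscr{T}_2(K,H^1)$, i.e.\ $\sigma_0:D(A_0^2)\to\mathscr{T}_2(K,D(A_0))$, is bounded and Lipschitz on balls, which by Remark \ref{rem-assumption-sigma_0} is exactly the additional input required by Theorem \ref{thm-2.7-regular data}. That theorem then yields $z\in C([0,\infty);\mathscr{H}_1)$ almost surely. The main thing to be careful about is simply bookkeeping the $\mu$-rescaling: verifying that multiplying the diffusion coefficient by $1/\mu$ does not spoil either the linear growth or local Lipschitz continuity required by the abstract theorems (which is obvious, with only a $\mu$-dependent constant), and that the energy equality obtained via the abstract $\Psi$ produces the intended physical energy $\Psi_\mu$ after rescaling.
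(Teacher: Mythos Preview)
Your proposal is correct and matches the paper's approach exactly: the paper does not give a separate proof of this theorem but simply observes (in the paragraph containing \eqref{eqn-6.02}) that with the identification $A_0^2=-\Delta$, problem \eqref{eq1-infin} is precisely problem \eqref{eqn-strong solution}, so that Theorem \ref{theorem 2.5} (together with Theorem \ref{thm-2.7-regular data} for the $\mathscr{H}_1$-regularity) applies directly. Your remarks that the Stratonovich correction vanishes because $\sigma_0$ depends only on $u$ (Remark \ref{rem-assumption-sigma_0}), that the energy equality \eqref{eqn-energy equality} is the specialization of \eqref{eqn-energy-Psi} after the $\mu$-rescaling, and that Hypothesis \ref{hyp-H3} supplies the input needed for Theorem \ref{thm-2.7-regular data}, are all exactly the bookkeeping the paper leaves implicit.
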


In what follows, we will study the asymptotic behavior of $u_\mu$, when the parameter $\mu$ goes to zero and we will prove that the following diffusion approximation result holds.

\begin{Theorem}
\label{teo8.3}
Assume Hypotheses \ref{hyp-H3}, \ref{hyp-H4} and \ref{hyp-H5} and fix $(u_0,v_0) \in\,\mathscr{H}_2\cap \mathscr{M}$.  Then, for every  $\alpha \in\,[0,2)$ and $q<4/\alpha$ and every $T>0$, we have
\begin{equation}   \label{sz-fine}\lim_{\mu\to 0} \mathbb{P}\left (\vert u_\mu-u\vert_{L^q(0,T;H^\alpha)}>\eta\right)=0,\ \ \ \ \ \eta>0,\end{equation}
where $u \in\,L^2(\Omega;L^4(0,T;H^1\cap M)\cap L^2(0,T;H^2))$ is the unique solution of the equation
\begin{equation}
\label{limit-eq}
\left\{\begin{array}{l}
\ds{\gamma\,\partial_t u(t,\xi)=\Delta u(t,\xi)+\norm{u(t)}{H^1}^2 u(t,\xi)-\frac 12 \Vert \sigma(u(t))\Vert_{\mathscr{T}_2(K,H)}^2 u(t) +\sigma(u(t))\partial_t \WP(t,\xi),}\\[18pt]
\ds{u(0,\xi)=u_0(\xi),\ \ \ \ u(t,\xi)=0,\ \ \ \xi \in\,\partial D.	}
\end{array}\right.
	\end{equation}
	\end{Theorem}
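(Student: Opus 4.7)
The overall strategy follows the standard three-step scheme for diffusion approximation results: (i) establish uniform-in-$\mu$ a-priori bounds for the pair $(u_\mu,v_\mu)$ in appropriate functional spaces, (ii) deduce tightness of the laws of $\{u_\mu\}_{\mu \in (0,1)}$ in $L^4(0,T;H^1)$, (iii) pass to the limit along subsequences and identify every limit point as the unique solution of \eqref{limit-eq}. Because the limit equation is pathwise unique under Hypotheses \ref{hyp-H3}--\ref{hyp-H5} (which can be verified by an argument analogous to \cite{Brz+Huss_2020}), convergence in law will upgrade to convergence in probability, yielding \eqref{sz-fine}. The genuinely new ingredient relative to the classical Smoluchowski--Kramers literature is the appearance of the noise-induced drift $-\tfrac12 \Vert\sigma(u)\Vert^2_{\mathscr{T}_2(K,H)}\,u$, whose origin is the cubic constraint term $\mu\,\normb{H}{v_\mu}^2 u_\mu$, and not the Stratonovich-to-It\^o correction.

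For the uniform bounds, the starting point is the energy equality \eqref{eqn-energy equality}, rearranged as
\begin{equation*}
\tfrac12 \norm{u_\mu(t)}{H^1}^2 + \tfrac{\mu}{2}\normb{H}{v_\mu(t)}^2 + \gamma \int_0^t \normb{H}{v_\mu(s)}^2\,ds = \text{(initial)} + \text{(martingale)} + \tfrac{1}{2\mu}\int_0^t \Vert \sigma(u_\mu(s))\Vert^2_{\mathscr{T}_2(K,H)}\,ds.
\end{equation*}
A-priori, the final term on the right is of order $1/\mu$ and threatens to ruin everything. The key observation is Hypothesis \ref{hyp-H4}, which, via \eqref{sz5-bis}, yields $\Vert\sigma(u_\mu)\Vert^2_{\mathscr{T}_2(K,H)}(1+\norm{u_\mu}{H^1}^2) \le C$ uniformly. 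Combined with a Gronwall argument applied to a modified Lyapunov functional that absorbs this term against the $\gamma\normb{H}{v_\mu}^2$ dissipation, this delivers $\sup_\mu \mathbb{E}\sup_{t\le T}\norm{u_\mu(t)}{H^1}^2 < \infty$, $\sup_\mu \mathbb{E}[\mu\normb{H}{v_\mu(t)}^2] < \infty$, and $\mu\cdot \mathbb{E}\int_0^T\normb{H}{v_\mu(s)}^2\,ds = O(1)$. The higher-regularity bounds in $\mathscr{H}_1=H^2 \times H^1$ (which are needed for tightness in $L^4(0,T;H^1)$) are obtained by a parallel energy estimate for $\tfrac12\norm{u_\mu}{H^2}^2 + \tfrac{\mu}{2}\norm{v_\mu}{H^1}^2$, using Hypothesis \ref{hyp-H5} to control $\Vert\sigma(u_\mu)\Vert^2_{\mathscr{T}_2(K,H^1)}$ and carefully handling the cubic nonlinearity $\normb{H}{\nabla u_\mu}^2 u_\mu$ (here the estimate \eqref{sz190} and the fact that $u_\mu \in M$ are crucial).

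Tightness in $L^4(0,T;H^1)$ of $\mathcal{L}(u_\mu)$ then follows by an Aubin--Lions-type compactness criterion: the bound in $L^\infty(0,T;H^2)$ supplies spatial compactness, while temporal regularity is obtained by rewriting the system as
\begin{equation*}
\gamma u_\mu(t) + \mu v_\mu(t) = \gamma u_0 + \mu v_0 + \int_0^t \bigl[\Delta u_\mu + \normb{H}{\nabla u_\mu}^2 u_\mu - \mu\normb{H}{v_\mu}^2 u_\mu\bigr]ds + \int_0^t \sigma(u_\mu)\,d\WP(s).
\end{equation*}
The a-priori bounds render the right-hand side equicontinuous in $H^{-1}$ (say) in an appropriate H\"older sense. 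Once tightness is secured, one extracts via Skorohod (or Jakubowski) a subsequence with an a.s.-convergent copy.

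The main obstacle is step (iii), the identification of the noise-induced drift. Dividing the displayed identity above by $\gamma$, all terms except $\frac{1}{\gamma}\int_0^t \mu\normb{H}{v_\mu}^2 u_\mu\,ds$ pass to the limit by soft compactness arguments: $\mu v_\mu(t)\to 0$ in $L^2(\Omega;H)$ since $\mathbb{E}\normb{H}{\mu v_\mu(t)}^2 \le \mu\cdot \mathbb{E}[\mu\normb{H}{v_\mu(t)}^2] \lesssim \mu$; the deterministic nonlinear terms converge by strong convergence in $L^4(0,T;H^1)$; and the stochastic integral converges by the continuity of $\sigma$ and standard arguments. The delicate point is that $\normb{H}{v_\mu}^2$ itself is of order $1/\mu$ and does \emph{not} converge; only its $\mu$-weighted time average does. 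To capture its limit, I would apply It\^o's formula to $\mu\normb{H}{v_\mu(t)}^2$, obtaining
\begin{equation*}
d\bigl(\mu\normb{H}{v_\mu}^2\bigr) = 2\scalar{H}{v_\mu}{[\Delta u_\mu + \normb{H}{\nabla u_\mu}^2 u_\mu - \mu\normb{H}{v_\mu}^2 u_\mu - \gamma v_\mu]\,dt + \sigma(u_\mu)d\WP} + \Vert \sigma(u_\mu)\Vert^2_{\mathscr{T}_2(K,H)}\,dt,
\end{equation*}
and, using the constraint $\scalar{H}{u_\mu}{v_\mu}=0$ throughout, extract an identity for $2\gamma\int_0^t\normb{H}{v_\mu}^2\,ds$ that expresses it, up to martingale and $O(\mu)$ terms, as $\int_0^t \Vert\sigma(u_\mu)\Vert^2_{\mathscr{T}_2(K,H)}\,ds$. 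Multiplying against a suitable test function and exploiting the uniform $H^2$-bound to handle the multiplication by $u_\mu$, one concludes
\begin{equation*}
\int_0^t \mu\normb{H}{v_\mu(s)}^2 u_\mu(s)\,ds \;\longrightarrow\; \frac{1}{2\gamma}\int_0^t \Vert\sigma(u(s))\Vert^2_{\mathscr{T}_2(K,H)}\,u(s)\,ds
\end{equation*}
in probability in $L^2(0,T;H)$, which is precisely the term producing the noise-induced drift in \eqref{limit-eq}. The remaining martingale contribution to this identity vanishes in the limit thanks to the $1/\sqrt{\mu}$ pre-factor generated by the energy bookkeeping. This convergence, combined with pathwise uniqueness of \eqref{limit-eq}, yields \eqref{sz-fine}.
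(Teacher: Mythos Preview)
Your three-step strategy and, in particular, the identification of the noise-induced drift via It\^o's formula applied to $\mu\normb{H}{v_\mu}^2$ match the paper almost exactly; the paper applies It\^o directly to the $H$-valued process $\tfrac{\mu^2}{2}\normb{H}{v_\mu}^2 u_\mu$ (Lemma~\ref{lemma2}) and then shows the remainder $R_\mu\to 0$ in $L^1(\Omega;L^\infty(0,T;H))$ (Lemma~\ref{lemma3}), which is equivalent to your ``scalar It\^o then multiply by $u_\mu$'' but with cleaner bookkeeping. Two points in your sketch understate what is actually required.

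First, the a-priori bounds are more delicate than indicated. The claim $\sup_\mu\mathbb{E}[\mu\normb{H}{v_\mu(t)}^2]<\infty$ does not follow from the energy equality with Hypothesis~\ref{hyp-H4} and Gronwall alone: the It\^o correction term is genuinely of order $1/\mu$ and is not absorbed by the dissipation. The paper instead proves $\sup_\mu \mu^{3/2}\mathbb{E}\sup_{t\le T}\normb{H}{v_\mu(t)}^2<\infty$ (Lemma~\ref{lemma5.6}) via a contradiction argument that compares the energy near a putative blow-up time with its time average; this is what yields $\mu v_\mu\to 0$ in $L^\infty(0,T;H)$ and feeds into both the tightness step and the vanishing of $I_{\mu,1}$ in Lemma~\ref{lemma3}. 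Moreover, the $\mathscr{H}_1$-bounds do not close by themselves: the paper needs the full $\mathscr{H}_2$-level estimates of Section~\ref{sec7} (hence the hypothesis $(u_0,v_0)\in\mathscr{H}_2$) to obtain \eqref{sz70-bis}, i.e.\ $\sup_\mu\sup_t\mathbb{E}\vert u_\mu(t)\vert_{H^2}^2<\infty$, which in turn is essential for the tightness interpolation in Lemma~\ref{lemma7.2} and for controlling the remaining terms of $R_\mu$.

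Second, the upgrade from tightness to convergence in probability is carried out via the Gy\"ongy--Krylov criterion \cite{gk}: for any pair of subsequences one passes to a jointly Skorokhod-represented copy, shows that both limits solve \eqref{limit-eq} driven by the same Wiener process, and uses pathwise uniqueness to conclude the limit is supported on the diagonal. You gesture at this (``convergence in law will upgrade''), but the mechanism is worth naming explicitly.
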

	
	 \begin{Remark}  {\em 	Although the limit of \( u_\mu \) to \( u \) lies in \( L^q(0,T,H^\alpha) \) for \( \alpha < 2 \), in Theorem \ref{teo8.3}, we require that \( (u_0, v_0) \) belongs to \( \mathcal{H}_2 = H^3 \times H^2 \). This requirement arises because, to establish the tightness of \( \{u_\mu\}_{\mu \in (0,1)} \) and validate the limit \eqref{sz-fine}, we need a priori bounds in the \( H^2 \)-norm. Given the nature of the equation satisfied by \( u_\mu \), such bounds follow from additional estimates on \( (u_\mu, \sqrt{\mu} \,\partial_t u_\mu) \) in \( \mathcal{H}_2 \).

However, in this context we note that \eqref{limit-eq} remains valid even if the initial conditions \( (u_0^\mu, v_0^\mu) \in \mathcal{H}_2 \cap \mathcal{M} \) depend on \( \mu \), provided that the following conditions hold
\[
\lim_{\mu\to 0} \vert (u^\mu_0,v^\mu_0)-(u_0,v_0) \vert_{\mathcal{H}_1} = 0,
\]
for some \( (u_0, v_0) \in \mathcal{H}_1 \), and
\[
\lim_{\mu\to 0} \sqrt{\mu}\, \vert (u^\mu_0, v^\mu_0) \vert_{\mathcal{H}_2} = 0.
\]
}
	\end{Remark}

	\section{A few comments about the limiting equation \eqref{limit-eq}}
	\label{sec5}
	
	In \cite{Brz+Huss_2020}  it is proven that for every $T>0$ there exists a unique mild solution
	\[u \in\,L^2(\Omega;C([0,T];H^1\cap M))\] for the {\em constrained} parabolic equation
	\begin{equation}
\label{limit-eq-bis}
\left\{\begin{array}{l}
\ds{\gamma\,\partial_t u(t,\xi)=\Delta u(t,\xi)+\norm{u(t)}{H^1}^2 u(t,\xi) +\sigma(u(t))\circ\partial_t \WP(t,\xi),}\\[18pt]
\ds{u(0,\xi)=u_0(\xi),\ \ \ \ u(t,\xi)=0,\ \ \ \xi \in\,\partial D.	}
\end{array}\right.
	\end{equation}
As we have seen above, equation \eqref{limit-eq-bis} can be rewritten in terms of It\^{o}'s integral as
\begin{equation}
	\label{sz-2-bis}
	\left\{\begin{array}{l}
\ds{\gamma\,\partial_t u(t,\xi)=\Delta u(t,\xi)+\norm{u(t)}{H^1}^2 u(t,\xi)+\frac 12\, \tr_K[\sigma^\prime(u(t))\sigma(u(t))] +\sigma(u(t))\partial_t \WP(t,\xi),}\\[18pt]
\ds{u(0,\xi)=u_0(\xi),\ \ \ \ u(t,\xi)=0,\ \ \ \xi \in\,\partial D.	}
\end{array}\right.
\end{equation}

The same arguments used in \cite{Brz+Huss_2020} for equation \eqref{limit-eq-bis} (or, equivalently, equation \eqref{sz-2-bis}) can be adapted to prove the well-posedness of the limiting equation \eqref{limit-eq} from Theorem \ref{teo8.3}. 
 Namely, for every $u_0 \in\,H^1\cap M$ and $T>0$ there exists a unique adapted process $u \in\,L^2(\Omega;C([0,T];H^1\cap M))$ such that 
for every   $\psi\in C^\infty_0(D)$ and $t \in\,[0,T]$
\begin{align}
\begin{split}
\gamma\scalar{H} { &u(t)}{\psi} =\gamma\scalar{H}{ u_0}{\psi} -\int_0^t \scalar{H}{ \nabla u(s)) }{\nabla \psi}\,ds\\
&\quad \quad \quad \quad \quad \quad\quad \quad +\int_0^t  \vert \nabla u(s)\vert_{H}^2\scalar{H}{u(s)}{\psi}\, ds +\int_0^t\scalar{H}{ \sigma(u(s))dw^Q (s)}{\psi}.
\end{split}
\end{align}

However, as we will show in the example we are providing below,  equation \eqref{sz-2-bis} and equation \eqref{limit-eq} are different,
 as well as their respective solutions.  This fact is somehow unexpected and shows how, as a consequence of the Smoluchowski-Kramers approximation of a damped stochastic wave equation, a new stochastic parabolic equation  satisfying the same constraints as equation \eqref{sz-2-bis} is obtained. In particular, all this  poses the intriguing question whether different stochastic parabolic equations can still describe a  motion confined to the unitary sphere of $L^2$ (to this purpose see also \cite{hr}).

 \medskip

Let $K=\mathbb{R}$ and let
\[\sigma_0(u):=g(\norm{u}{H^1}^2)h,\;\;\; u \in\,H^1,\]
where $g(t)=(1+t)^{-1}$, and $h \in\,H^2$, with $\norma{H}{ h}{H}=1$. It is immediate to check that $\sigma_0$ satisfies  Hypotheses \ref{hyp-H3}, \ref{hyp-H4} and \ref{hyp-H5}.
If we define
\[\sigma(u)=\sigma_0(u)-\scalar{H}{ \sigma_0(u)}{u} u,\ \ \ \ u \in\,H^1,\]
we have the following identity.
\begin{Lemma}
\label{lemma9.1}
For every $u \in\,H^1\cap M$, we have
\begin{equation}
\label{szfine1}
\sigma^\prime(u)\sigma(u)+\norma{H}{ \sigma(u)}{H}^2 u=\Lambda(u) \left(1+\vert u\vert_{H^1}^2\right)^{-3},	\end{equation}
where
\begin{align}\begin{split}
\label{szfine2}
\Lambda(u)&=\left(\scalar{H}{ u}{h}\,\norm{u}{H^1}^2-2\scalar{H^1}{ u}{h}-\scalar{H}{ u}{h}\right)\left(h-\langle u,h\rangle_H u\right).
\end{split}  \end{align}
\begin{proof}

The mapping $\sigma_0:H^1\to H$ is differentiable, so that also the mapping $\sigma:H^1\to H$ is differentiable and for every $u \in\,H^1\cap M$ it holds
\begin{align*}
	\sigma^\prime(u)&\sigma(u)=\sigma_0^\prime(u)\sigma_0(u)-\scalar{H}{ \sigma_0(u)}{u}\sigma_0^\prime(u)u-\scalar{H}{ \sigma_0^\prime(u)\sigma_0(u)}{u} u\\[10pt]
	&\hslp +\scalar{H}{ \sigma_0(u)}{u} \scalar{H}{ \sigma_0^\prime(u)u}{u} u-\norma{H}{ \sigma_0(u)}{H}^2u+2\left|\scalar{H}{ \sigma_0(u)}{u}\right|^2u -\scalar{H}{ \sigma_0(u)}{u}\sigma_0(u).\end{align*}
Since for every $u \in\,H^1\cap M$ and $v \in\,H^1$ we have
\begin{align*}
\sigma_0^\prime(u)v=2\,g^\prime(\norm{u}{H^1}^2)	\scalar{H^1}{ v}{u}h=-2\,g^2(\norm{u}{H^1}^2)	\scalar{H^1}{ v}{u}h,
\end{align*}
this gives
\begin{align*}
\sigma^\prime&(u)\sigma(u)=-2\,g^3(\norm{u}{H^1}^2)\,\scalar{H^1}{ h}{u}h+2\,g^3(\norm{u}{H^1}^2)\,\scalar{H}{ h}{u} \norm{u}{H^1}^2 h\\[10pt]
&+2\,g^3(\norm{u}{H^1}^2)\,\scalar{H^1}{ h}{u}\scalar{H}{ h}{u} u-2\,g^3(\norm{u}{H^1}^2)\,\left|\scalar{H}{ h}{u}\right|^2\norm{u}{H^1}^2u\\[10pt]
&-g^2(\norm{u}{H^1}^2)u+g^2(\norm{u}{H^1}^2)\left|\scalar{H}{ h}{u}\right|^2 u-g^2(\norm{u}{H^1}^2)\scalar{H}{ h}{u}h+g^2(\norm{u}{H^1}^2)\vert \scalar{H}{ h}{u}\vert^2 u,
\end{align*}
so that
\begin{align}
	\begin{split}
	\label{szfine4}
\sigma^\prime&(u)\sigma(u)=	g^3(\norm{u}{H^1}^2)\left(\scalar{H}{ u}{h}\,\norm{u}{H^1}^2-2\scalar{H^1}{ u}{h}-\scalar{H}{ u}{h}\right)h\\[10pt]
&\hsp +\left(2\vert\scalar{H}{ u}{h}\vert^2+2\scalar{H}{ u}{h}\scalar{H^1}{ u}{h}-1-\norm{u}{H^1}^2\right)u	
	\end{split}
\end{align}

Now, since we are assuming that $\norma{H}{ u}{H}=1$, we have
\begin{align*}
\norma{H}{ \sigma(u)}{H}^2u	&=\norma{H}{ \sigma_0(u)}{H}^2-\left|
\scalar{H}{ \sigma_0(u)}{h}\right|^2u=g^2(\norm{u}{H^1}^2)\left(1-\left|\scalar{H}{ u}{h}\right|^2\right)u\\[10pt]
&=g^3(\norm{u}{H^1}^2)\left(1+\norm{u}{H^1}^2-\left|\scalar{H}{ u}{h}\right|^2-\norm{u}{H^1}^2\left|\scalar{H}{ u}{h}\right|^2\right)u.
\end{align*}
Therefore, if we sum this expression with \eqref{szfine4}, we obtain \eqref{szfine1}, with $\Lambda$ defined as in \eqref{szfine2}.
	
\end{proof}

\end{Lemma}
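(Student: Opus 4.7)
The plan is to verify identity \eqref{szfine1} by a direct computation: expand the Fréchet derivative $\sigma'(u)\sigma(u)$ using the explicit form of $\sigma_0$, then add $\normb{H}{\sigma(u)}^2 u$ and simplify using $\normb{H}{u}=1$ and the identity $g(t)(1+t)=1$.

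First, I would establish the formula for $\sigma'(u)$. Since $\sigma(u)=\sigma_0(u)-\scalar{H}{\sigma_0(u)}{u}\,u$, the product/chain rule yields
\[
\sigma'(u)v=\sigma_0'(u)v-\scalar{H}{\sigma_0'(u)v}{u}u-\scalar{H}{\sigma_0(u)}{v}u-\scalar{H}{\sigma_0(u)}{u}v,
\qquad v\in H^1.
\]
Because $\sigma_0(u)=g(\normb{H^1}{u}^2)h$ with $g(t)=(1+t)^{-1}$ (so $g'(t)=-g^2(t)$), the chain rule gives
\[
\sigma_0'(u)v=-2\,g^2(\normb{H^1}{u}^2)\,\scalar{H^1}{v}{u}\,h.
\]

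Second, I would substitute $v=\sigma(u)=g(\normb{H^1}{u}^2)(h-\scalar{H}{h}{u}\,u)$ into the four terms above. Using $\normb{H}{h}=1$, $\normb{H}{u}=1$, and the auxiliary identities $\scalar{H^1}{\sigma(u)}{u}=g(\normb{H^1}{u}^2)(\scalar{H^1}{h}{u}-\scalar{H}{h}{u}\,\normb{H^1}{u}^2)$ and $\scalar{H}{\sigma_0(u)}{u}=g(\normb{H^1}{u}^2)\scalar{H}{h}{u}$, each of the four contributions becomes an explicit multiple of $h$ or $u$, which I would group into an expression of the form $g^3(\normb{H^1}{u}^2)\,P(u)\,h+g^3(\normb{H^1}{u}^2)\,Q(u)\,u$ plus lower-order-in-$g$ remainders. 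This gives an expression for $\sigma'(u)\sigma(u)$ in terms of the scalars $\scalar{H}{u}{h}$, $\scalar{H^1}{u}{h}$ and $\normb{H^1}{u}^2$, multiplied by powers of $g$.

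Third, I would compute $\normb{H}{\sigma(u)}^2$ directly: since $\sigma(u)=\Pi_u\sigma_0(u)$ and $\Pi_u$ is an orthogonal projection onto $(\mathbb{R}u)^\perp$, we have
\[
\normb{H}{\sigma(u)}^2=\normb{H}{\sigma_0(u)}^2-|\scalar{H}{\sigma_0(u)}{u}|^2=g^2(\normb{H^1}{u}^2)\bigl(1-|\scalar{H}{u}{h}|^2\bigr).
\]
Multiplying by $u$ gives another contribution to the $u$-coefficient.

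Finally, I would add the two expressions and collapse powers of $g$ via the identity $g(\normb{H^1}{u}^2)\,(1+\normb{H^1}{u}^2)=1$, which turns each $g^{k+1}(\normb{H^1}{u}^2)(1+\normb{H^1}{u}^2)$ into $g^k(\normb{H^1}{u}^2)$. After cancellations, the coefficient of $h$ reduces to $\scalar{H}{u}{h}\normb{H^1}{u}^2-2\scalar{H^1}{u}{h}-\scalar{H}{u}{h}$ and the coefficient of $u$ reduces to $\scalar{H}{u}{h}\bigl(\scalar{H}{u}{h}+\scalar{H^1}{u}{h}-\scalar{H}{u}{h}\normb{H^1}{u}^2\bigr)$, matching \eqref{szfine2}. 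The main obstacle I foresee is purely bookkeeping: there are roughly a dozen scalar-valued terms with different powers of $g$, and one must carefully track the sign and the $(h,u)$-component of each before the telescoping through $g(1+t)=1$ produces the clean expression $\Lambda(u)$.
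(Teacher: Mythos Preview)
Your proposal is correct and follows essentially the same approach as the paper: both expand $\sigma'(u)v$ via the product rule applied to $\sigma(u)=\sigma_0(u)-\scalar{H}{\sigma_0(u)}{u}u$, substitute $v=\sigma(u)$, use the explicit formula $\sigma_0'(u)v=-2g^2(\normb{H^1}{u}^2)\scalar{H^1}{v}{u}h$, compute $\normb{H}{\sigma(u)}^2u$ separately via the projection identity, and then combine using $g(t)(1+t)=1$. Your four-term formula for $\sigma'(u)v$, once $v=\sigma(u)$ is inserted, unfolds into exactly the seven-term expression the paper writes down, so the two computations are the same up to the order in which terms are grouped.
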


The mapping $\Lambda:H^1\to H^1$ we have introduced in Lemma \ref{lemma9.1} is continuous and the set
\[Z:=\left\{ u\in\,H^1\cap M\ :\ \Lambda(u)=0\right\},\]
is a closed subset of $H^1\cap M$. It is immediate to check that, if $\bar{u} \in\,H^1\cap M$ is such that
\[\scalar{H}{ \bar{u}}{h}=0,\;\;\; \scalar{H^1}{ \bar{u}}{h}\neq 0,\]
we have that $\Lambda(\bar{u})=-2\langle \bar{u},h\rangle_{H^1}(1+\vert \bar{u}\vert_{H^1}^2)^3 h\neq 0$, and this means  that $Z^c:=H^1\cap M\setminus Z$ is a non-empty open set.

Now, we fix $u_0 \in\,Z^c$ and we denote by $u$ the solution of the equation
\begin{equation}
	\label{sz-1}
	\left\{\begin{array}{l}
\ds{\gamma\,\partial_t u(t,\xi)=\Delta u(t,\xi)+\norm{u(t)}{H^1}^2 u(t,\xi)-\frac 12 \vert \sigma(u(t))\vert_{H}^2 u(t) +\sigma(u(t))d\beta_t,}\\[18pt]
\ds{u(0,\xi)=u_0(\xi),\ \ \ \ u(t,\xi)=0,\ \ \ \xi \in\,\partial D,	}
\end{array}\right.
\end{equation}
where $\sigma$ is the mapping introduced above and $\beta_t$ is a standard Brownian motion. Moreover, we denote by $\tilde{u}$ the solution of the equation
\begin{equation}
	\label{sz-2}
	\left\{\begin{array}{l}
\ds{\gamma\,\partial_t \tilde{u}(t,\xi)=\Delta \tilde{u}(t,\xi)+\vert \tilde{u}(t)\vert_{H^1}^2 \tilde{u}(t,\xi)+\frac 12 \sigma^\prime(\tilde{u}(t))\sigma(\tilde{u}(t)) +\sigma(\tilde{u}(t))d\beta_t,}\\[18pt]
\ds{\tilde{u}(0,\xi)=u_0(\xi),\;\;\; \tilde{u}(t,\xi)=0,\ \ \ \xi \in\,\partial D,	}
\end{array}\right.
\end{equation}
for the same mapping $\sigma $ and the same Brownian motion $\beta_t$. Both equations admit a unique solution in $L^2(\Omega;C([0,T];H^1\cap M))$
\begin{Theorem}
The two solutions $u$ and $\tilde{u}$ are different.	
\end{Theorem}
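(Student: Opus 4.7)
The plan is to argue by contradiction: assume that $u$ and $\tilde u$ have the same law as $C([0,T];H^1)$-valued random variables and derive $\Lambda(u_0)=0$, contradicting the hypothesis $u_0\in Z^c$. Both equations \eqref{sz-1} and \eqref{sz-2} are already in It\^o form (the term $\tfrac12\sigma'(\tilde u)\sigma(\tilde u)$ in \eqref{sz-2} being precisely the Stratonovich-to-It\^o correction for \eqref{limit-eq-bis}) and share the same diffusion coefficient $\gamma^{-1}\sigma(\cdot)$. By Lemma \ref{lemma9.1}, their drifts evaluated at the same $v\in H^1\cap M$ differ by exactly $-\tfrac{1}{2\gamma}\Lambda(v)$. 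I test both weak formulations against an arbitrary $\phi\in C_c^\infty(D)$ and take expectations, killing the martingale parts, so that, denoting by $F$ and $\tilde F$ the It\^o drifts of \eqref{sz-1} and \eqref{sz-2},
\[
\gamma\,\mathbb E\,\scalar{H}{u(t)}{\phi}  = \gamma\,\scalar{H}{u_0}{\phi} + \int_0^t \mathbb E\,\scalar{H}{F(u(s))}{\phi}\,ds,
\]
with the analogous identity for $\tilde u$.

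Under the equal-laws assumption the two left-hand sides coincide, and furthermore $\mathbb E\,\scalar{H}{\tilde F(u(s))}{\phi} = \mathbb E\,\scalar{H}{\tilde F(\tilde u(s))}{\phi}$, because $v\mapsto \scalar{H}{\tilde F(v)}{\phi}$ is a measurable functional on $C([0,T];H^1)$. Subtracting the two identities yields
\[
\int_0^t \mathbb E\,\scalar{H}{\Lambda(u(s))}{\phi}\,ds = 0, \qquad t\ge 0,
\]
so $\mathbb E\,\scalar{H}{\Lambda(u(s))}{\phi}=0$ for a.e.\ $s\ge 0$. I then pass to the limit $s\to 0^+$: almost-sure continuity of $s\mapsto u(s)$ in $H^1$ (part of the well-posedness of \eqref{limit-eq}, obtained by adapting \cite{Brz+Huss_2020}), combined with the explicit polynomial form \eqref{szfine2} of $\Lambda$, the constraint $\normb{H}{u(s)}=1$, and uniform-in-$s$ moment bounds on $\normb{H^1}{u(s)}$, furnish via dominated convergence the continuity of $s\mapsto \mathbb E\,\scalar{H}{\Lambda(u(s))}{\phi}$ at zero. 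Hence $\scalar{H}{\Lambda(u_0)}{\phi}=0$, and since $\phi\in C_c^\infty(D)$ is arbitrary, $\Lambda(u_0)=0$, contradicting $u_0\in Z^c$.

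The heart of the proof is the cancellation identity of Lemma \ref{lemma9.1}, together with the elementary principle that the first-moment evolution of an SDE retains only the It\^o drift, not the diffusion. The main technical obstacle is the uniform integrability required to justify continuity at $s=0$ of $s\mapsto \mathbb E\,\scalar{H}{\Lambda(u(s))}{\phi}$; this reduces, via the polynomial growth of $\Lambda$, to a moment bound of the type $\mathbb E\,\sup_{s\in[0,T]}\normb{H^1}{u(s)}^p \le C$ for some $p>2$, which is supplied by the energy estimate underlying the well-posedness of \eqref{limit-eq}.
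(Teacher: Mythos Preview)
Your proof is correct and in fact establishes the stronger conclusion that $u$ and $\tilde u$ have different laws, not merely that they are pathwise different. Both your argument and the paper's hinge on Lemma~\ref{lemma9.1}, which identifies the difference of the two It\^o drifts as $\tfrac{1}{2}\Lambda$. However, the paper proceeds by a direct pathwise comparison: it assumes $u(s)=\tilde u(s)$ for $s<\tau'$ (some stopping time with $\mathbb P(\tau'>0)=1$), so that on $[0,\tau')$ the process $u$ solves both equations simultaneously, forcing $\int_0^t\Lambda(u(s))\,ds=0$ and hence $\Lambda(u(t))=0$ for a.e.\ $t<\tau'$; this is then confronted with the stopping time $\tau=\inf\{t:u(t)\in Z\}$, which is a.s.\ positive because $u_0\in Z^c$ and $Z$ is closed, yielding the contradiction $\Lambda(u(t))=0$ while $u(t)\in Z^c$.

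Compared with your route, the paper's argument is shorter and avoids the two technical steps you flag: there is no need to take expectations (and hence no need to check that the stochastic integral is a true martingale), and there is no passage to the limit $s\to 0^+$ requiring uniform integrability or moment bounds. On the other hand, your argument yields the sharper statement that the \emph{laws} differ, which the pathwise proof does not directly give. Note also that in this particular example the factor $g^3(\normb{H^1}{u}^2)=(1+\normb{H^1}{u}^2)^{-3}$ present in the computation of $\Lambda$ makes $\Lambda$ uniformly bounded on $H^1\cap M$, so the dominated-convergence step you worry about is in fact trivial here and does not require the higher moment bound you invoke.
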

\begin{proof}
We introduce the stopping time
\[\tau:=\inf\{t \in\,[0,T]\ :\ u(t)	 \in\,Z\},\]
with the usual convention that $\inf \emptyset=T$.
Since $u(0)=u_0 \in\,Z^c$ and $Z$ is closed, we have that $\mathbb{P}(\tau>0)=1$. Now, if we assume that there exists some stopping time $\tau^\prime$ such that $\mathbb{P}(\tau^\prime>0)=1$ and
\[u(s)=\tilde{u}(s),\;\;\; s \in\,[0,\tau^\prime),\;\;\;\mathbb{P}-\text{a.s.}\]
we have
\[-\int_0^t \norma{H}{\sigma(u(s))}{H}^2 u(s)\,ds=\int_0^t \sigma^\prime(u(s))\sigma(u(s))\,ds,\ \ \ \ t<\tau^\prime,\;\;\;\mathbb{P}-\text{a.s.}\]
In particular
\[\int_0^t \Lambda(u(s))\,ds=0,\ \ \ \ t<\tau^\prime,\;\;\;\mathbb{P}-\text{a.s.}\]
so that
\[\Lambda(u(t))=0,\;\;\;\text{a.e.}\ t<\tau^\prime\wedge \tau, \;\;\;\mathbb{P}-\text{a.s.}\]
However, this is not possible, as $\mathbb{P}\left(u(t) \in\,Z^c,\ t<\tau\right)=1$.
\end{proof}

\section{A-priori bounds. Part I}
\label{sec6}
In what follows we prove a series of a priori-bounds for the solution of system \eqref{system}.

\medskip

\begin{Lemma} \label{Lemma6.1}
	Assume Hypothesis \ref{hyp-H3}  and fix $(u_0,v_0) \in\,\mathscr{H}\cap \mathscr{M}$. Then, for every  integer $p\geq 1$ and every $T>0$ there exists a constant $c_{T,p}>0$ such that for every $\mu \in\,(0,1)$
	\begin{align}
\begin{split}
\label{sz159}
\mu^{p}\,\mathbb{E}\sup_{s \in\,[0,t]}&\norma{H}{ \partial_t u_\mu(s)}{H}^{2p}+ \mathbb{E}\sup_{s \in\,[0,t]}\vert u_\mu(s)\vert_{H^1}^{2p}	+\mu^{p-1}\,\mathbb{E} \,\int_0^t\norma{H}{ \partial_t u_\mu(s)}{H}^{2p}\,ds
\\[10pt]
&\leq c_{T,p}+\frac{c}{\mu}\,\mathbb{E} \,\int_0^t\vert u_\mu(s)\vert_{H^1}^{2(p-1)}\,\Vert \sigma_0(u_\mu(s))\Vert_{\mathscr{T}_2(K,H)}^2\,ds.\end{split}
\end{align}
	\end{Lemma}
	
	\begin{proof}
	Let us consider the function $K_p:H \ni u \mapsto \vert v\vert_{H}^{2p} \in    \mathbb{R}$. Then  the 2nd order Frech\'et derivative of $K_p$ satisfies
\begin{equation}
\label{eqn-d^2K_2p}
D^2K_p(v)=
4p(p-1)\,\normb{H}{ v}^{2(p-2)}\scalar{H}{v}{\cdot}\scalar{H}{v}{\cdot} +2p\,\normb{H}{v}^{2(p-1)} \scalar{H}{\cdot}{\cdot}.
\end{equation}
	Thus, if we set
\begin{equation}\label{eqn-I_p}
I_p(u,v):= \tr \bigl[ D^2K_p(\sigma(u)\cdot,\sigma(u)\cdot) \bigr]= \sum_{k=1}^\infty \dela{\scalar{H}{ D^2 K_p(v)\sigma(u)\tilde{e}_k}{\sigma(u)\tilde{e}_k}}
 D^2 K_p (\sigma(u)\tilde{e}_k,\sigma(u)\tilde{e}_k),
\end{equation}
	by \eqref{eqn-4.14} we have
\begin{align}
	 \begin{split}
	 \label{sz160}
	 I_p(u,v)=& \sum_{k=1}^\infty \left(
4p(p-1)\,\normb{H}{ v}^{2(p-2)}\scalar{H}{v}{\sigma(u)\tilde{e_k}}^2 +2p\,\norma{H}{ v}{H}^{2(p-1)} \normb{H}{\sigma(u)\tilde{e_k}}^2 \right)
\\[10pt]
&
\hsp \leq c_p \norma{H}{ v}{H}^{2(p-1)}\,\Vert \sigma_0(u)\Vert_{\mathscr{T}_2(K,H)}^2.	
	 \end{split}
	\end{align}
\dela{
	4p(p-1)\,\norma{H}{ v}{H}^{2(p-2)}\,\norma{H}{ [\sigma_0(u)Q]^\star v}{H}^2
+	
2p\,\norma{H}{ v}{H}^{2(p-1)}\left(\Vert \sigma_0(u)\Vert_{\mathscr{T}_2(K,H)}^2-\vert [\sigma_0(u)Q]^\star u\vert_{H}^2\right)
}

In particular, by  the It\^o Lemma \ref{lem-Ito-final} applied to the function $K_p$ and the process $v_\mu$, we get
	\begin{align}
\label{sz165}
d\,&\vert v_\mu(t)\vert_{H}^{2p}\leq -\frac p\mu \vert v_\mu(t)\vert_{H}^{2(p-1)}\,d\vert u_\mu(t)\vert_{H^1}^2-\frac{2\,p\,\gamma}\mu \vert v_\mu(t)\vert_{H}^{2p}\,dt\\[10pt]
&+\frac{c_p}{\mu^2}\,\norma{H}{ v_\mu(t)}{H}^{2(p-1)}\,\Vert \sigma_0(u_\mu(t))\Vert_{\mathscr{T}_2(K,H)}^2\,dt+\frac{2p}\mu \norma{H}{ v_\mu(t)}{H}^{2(p-1)}\scalar{H}{ \sigma_0(u_\mu(t))\,d\WP(t)}{v_\mu(t)},
	\end{align}
so that
\begin{align}
\label{sz166}
d\,\vert v_\mu(t)&\vert_{H}^{2p}+\frac p\mu d\left(\vert v_\mu(t)\vert_{H}^{2(p-1)}\,\vert u_\mu(t)\vert_{H^1}^2\right)+\frac{2\,p\,\gamma}\mu \vert v_\mu(t)\vert_{H}^{2p}\,dt\\[10pt]
&\leq\frac{c_p}{\mu^2}\,\norma{H}{ v_\mu(t)}{H}^{2(p-1)}\,\Vert \sigma_0(u_\mu(t))\Vert_{\mathscr{T}_2(K,H)}^2\,dt+
\\[10pt]
	&\hslp\frac{2p}\mu \norma{H}{ v_\mu(t)}{H}^{2(p-1)}\scalar{H}{ \sigma_0(u_\mu(t))\,d\WP(t)}{v_\mu(t)}+\frac p{\mu}\,\vert u_\mu(t)\vert_{H^1}^2\,d\,\vert v_\mu(t)\vert_{H}^{2(p-1)}.
\end{align}

	Next, if we use  inequality \eqref{sz165} in \eqref{sz166}, with $p$ replaced by  $p-1$,  we get
	\begin{align*}
&d\,\vert v_\mu(t)\vert _{H}^{2p}	+\frac p\mu d\left(\vert v_\mu(t)\vert_{H}^{2(p-1)}\,\vert u_\mu(t)\vert_{H^1}^2\right)+\frac{2\,p\,\gamma}\mu \vert v_\mu(t)\vert_{H}^{2p}\,dt\\[10pt]
&\leq \frac{c_p}{\mu^2}\,\norma{H}{ v_\mu(t)}{H}^{2(p-1)}\,\Vert \sigma_0(u_\mu(t))\Vert_{\mathscr{T}_2(K,H)}^2\,dt+\frac{2p}\mu \norma{H}{ v_\mu(t)}{H}^{2(p-1)}\scalar{H}{ \sigma_0(u_\mu(t))\,d\WP(t)}{v_\mu(t)}\\[10pt]
&\hsllp-\frac{p(p-1)}{\mu^2}\vert u_\mu(t)\vert_{H^1}^2\vert v_\mu(t)\vert_{H}^{2(p-2)}\,d\vert u_\mu(t)\vert_{H^1}^2-\frac{2\,p(p-1)\,\gamma}{\mu^2}\vert u_\mu(t)\vert_{H^1}^2 \vert v_\mu(t)\vert_{H}^{2(p-1)}\,dt\\[10pt]
&\hsllp+\frac{p\,c_p}{\mu^3}\,\vert u_\mu(t)\vert_{H^1}^2\norma{H}{ v_\mu(t)}{H}^{2(p-2)}\,\Vert \sigma_0(u_\mu(t))\Vert_{\mathscr{T}_2(K,H)}^2\,dt\\[10pt]
&\hsllp+\frac{2p(p-1)}{\mu^2}\,\vert u_\mu(t)\vert_{H^1}^2 \norma{H}{ v_\mu(t)}{H}^{2(p-2)}\scalar{H}{ \sigma_0(u_\mu(t))\,d\WP(t)}{v_\mu(t)},	\end{align*}
and this implies
\begin{align*}
&d\,\vert v_\mu(t)\vert _{H}^{2p}	+\frac p\mu d\left(\vert v_\mu(t)\vert_{H}^{2(p-1)}\,\vert u_\mu(t)\vert_{H^1}^2\right)+\frac{p(p-1)}{2\mu^2}\,d\left(\vert v_\mu(t)\vert_{H}^{2(p-2)}\,\vert u_\mu(t)\vert_{H^1}^4\right)\\[10pt]
&+\frac{2\,p\,\gamma}\mu \vert v_\mu(t)\vert_{H}^{2p}\,dt+\frac{2\,p(p-1)\,\gamma}{\mu^2}\vert u_\mu(t)\vert_{H^1}^2 \vert v_\mu(t)\vert_{H}^{2(p-1)}\,dt\\[10pt]
&\leq c_p\left(\frac{1}{\mu^2}\,\norma{H}{ v_\mu(t)}{H}^{2(p-1)}+\frac{1}{\mu^3}\,\vert u_\mu(t)\vert_{H^1}^2\norma{H}{ v_\mu(t)}{H}^{2(p-2)}\right)\Vert \sigma_0(u_\mu(t))\Vert_{\mathscr{T}_2(K,H)}^2\,dt\\[10pt]
&+\frac{2p}\mu \norma{H}{ v_\mu(t)}{H}^{2(p-1)}\scalar{H}{ \sigma_0(u_\mu(t))\,d\WP(t)}{v_\mu(t)}+\frac{p(p-1)}{2\mu^2}\,\vert u_\mu(t)\vert_{H^1}^4\,d\vert v_\mu(t)\vert_{H}^{2(p-2)}\\[10pt]
&+\frac{2p(p-1)}{\mu^2}\,\vert u_\mu(t)\vert_{H^1}^2 \norma{H}{ v_\mu(t)}{H}^{2(p-2)}\scalar{H}{ \sigma_0(u_\mu(t))\,d\WP(t)}{v_\mu(t)}\\[10pt]
&.	\end{align*}
By proceeding in this way recursively, we obtain
\begin{align*}
\sum_{i=0}^p\frac{a_{i,p}}{\mu^i}\,d&\left(\norma{H}{ v_\mu(t)}{H}^{2(p-i)}\,\vert u_\mu(t)\vert_{H^1}^{2i}\right)+\sum_{i=1}^p\frac{b_{i,p}}{\mu^i}\,\norma{H}{ v_\mu(t)}{H}^{2(p-i+1)}\,\vert u_\mu(t)\vert_{H^1}^{2(i-1)}\,dt
\\[10pt]
&\leq \sum_{i=1}^p\frac{c_{i,p}}{\mu^{i+1}}\norma{H}{ v_\mu(t)}{H}^{2(p-i)}\,\vert u_\mu(t)\vert_{H^1}^{2(i-1)}\,\Vert \sigma_0(u_\mu(t))\Vert_{\mathscr{T}_2(K,H)}^2\,dt\\[10pt]
&\hsp+\sum_{i=1}^p\frac{d_{i,p}}{\mu^{i}}\norma{H}{ v_\mu(t)}{H}^{2(p-i)}\,\vert u_\mu(t)\vert_{H^1}^{2(i-1)}\scalar{H}{ \sigma_0(u_\mu(t))\,d\WP(t)}{v_\mu(t)}.
\end{align*}
Thus, if we integrate both sides with respect to time and then take the supremum, we get
\begin{align}
\begin{split}
\label{sz177}
\sum_{i=0}^p&\frac{1}{\mu^i} \sup_{s \in\,[0,t]}\norma{H}{ v_\mu(s)}{H}^{2(p-i)}\,\vert u_\mu(s)\vert_{H^1}^{2i}	+\sum_{i=0}^{p-1}\frac{1}{\mu^{i+1}}\,\int_0^t\norma{H}{ v_\mu(s)}{H}^{2(p-i)}\,\vert u_\mu(s)\vert_{H^1}^{2i}\,ds
\\[10pt]
&\leq \frac{c_{p}}{\mu^p}+c\sum_{i=1}^p \frac 1{\mu^{i+1}}\int_0^t\norma{H}{ v_\mu(s)}{H}^{2(p-i)}\,\vert u_\mu(s)\vert_{H^1}^{2(i-1)}\,\Vert \sigma_0(u_\mu(s))\Vert_{\mathscr{T}_2(K,H)}^2\,ds\\[10pt]
&+c\sum_{i=1}^p \frac 1{\mu^{i}}\sup_{s \in\,[0,t]}\left|\int_0^s\norma{H}{ v_\mu(r)}{H}^{2(p-i)}\,\vert u_\mu(r)\vert_{H^1}^{2(i-1)}\scalar{H}{ \sigma_0(u_\mu(r))\,d\WP(r)}{v_\mu(r)}\right|\\[10pt]
&\hsp=:\frac{c_{p}}{\mu^p}+\sum_{i=1}^p I_{i,p}(t)+\sum_{i=1}^p J_{i,p}(t).
\end{split}
\end{align}
Due  to the boundedness of $\sigma_0:H^1\to\mathscr{T}_2(K,H)$,  we have
\begin{align}
	\begin{split}
	\label{sz175}
\sum_{i=1}^p &\,\mathbb{E}\,I_{i,p}(t)	=	\frac 1{\mu^{p+1}}\mathbb{E} \,\int_0^t\,\vert u_\mu(s)\vert_{H^1}^{2(p-1)}\,\Vert \sigma_0(u_\mu(s))\Vert_{\mathscr{T}_2(K,H)}^2\,ds\\[10pt]
&\hslp +\frac 14 \sum_{i=2}^p \frac 1{\mu^{i+1}}\mathbb{E} \,\int_0^t\,\norma{H}{ v_\mu(s)}{H}^{2(p-i)}\,\vert u_\mu(s)\vert_{H^1}^{2i}\,ds+c \sum_{i=2}^p \frac 1{\mu^{i+1}}\mathbb{E} \,\int_0^t\,\norma{H}{ v_\mu(s)}{H}^{2(p-i)}\,ds.
	\end{split}
\end{align}
Moreover,
\begin{align*}
\begin{split}
\mathbb{E}\,&\sum_{i=1}^p J_{i,p}(t)\leq \frac c{\mu}\,\mathbb{E}\left(\int_0^t \norma{H}{ v_\mu(s)}{H}^{4(p-1)+2}\,\Vert \sigma_0(u_\mu(s))\Vert_{\mathscr{T}_2(K,H)}^2\,ds\right)^{\frac 12}\\[10pt]
&\hsp+c\sum_{i=2}^p\frac 1{\mu^i}\,\mathbb{E}\left(\int_0^t \norma{H}{ v_\mu(s)}{H}^{4(p-i)+2}\vert u_\mu(s)\vert_{H^1}^{4(i-1)}\,\Vert \sigma_0(u_\mu(s))\Vert_{\mathscr{T}_2(K,H)}^2\,ds\right)^{\frac 12}\\[10pt]
&\leq \frac 1{2\mu}\,\mathbb{E}\sup_{s \in\,[0,t]}\norma{H}{ v_\mu(s)}{H}^{2p}+\frac c{\mu^2}\,\mathbb{E} \,\int_0^t\norma{H}{ v_\mu(s)}{H}^{2(p-1)}\,ds\\[10pt]
&\hsllp+\sum_{i=2}^p\frac 1{2\mu^i}\,\mathbb{E}
\sup_{s \in\,[0,t]}\norma{H}{ v_\mu(s)}{H}^{2(p-i)}\,\vert u_\mu(s)\vert_{H^1}^{2i}+c\sum_{i=2}^p\frac 1{\mu^i}\,\mathbb{E} \,\int_0^t
\norma{H}{ v_\mu(s)}{H}^{2(p-i+1)}\,\vert u_\mu(s)\vert_{H^1}^{2(i-2)}\,ds.
\end{split}
	\end{align*}
Now, we have
\begin{align*}
c\sum_{i=2}^p&\frac 1{\mu^i}\,\mathbb{E} \,\int_0^t
\norma{H}{ v_\mu(s)}{H}^{2(p-i+1)}\,\vert u_\mu(s)\vert_{H^1}^{2(i-2)}\,ds	=c\sum_{i=1}^{p-1}\frac 1{\mu^{i+1}}\,\mathbb{E} \,\int_0^t
\norma{H}{ v_\mu(s)}{H}^{2(p-i)}\,\vert u_\mu(s)\vert_{H^1}^{2(i-1)}\,ds\\[10pt]
&\leq \sum_{i=1}^{p-1}\frac 1{4\mu^{i+1}}\,\mathbb{E} \,\int_0^t
\norma{H}{ v_\mu(s)}{H}^{2(p-i)}\,\vert u_\mu(s)\vert_{H^1}^{2i}\,ds+c\sum_{i=1}^{p-1}\frac 1{\mu^{i+1}}\,\mathbb{E} \,\int_0^t
\norma{H}{ v_\mu(s)}{H}^{2(p-i)}\,ds,
\end{align*}
so that
\begin{align}
\label{sz176}
\begin{split}
\sum_{i=1}^p \mathbb{E}\,&J_{i,p}(t) \leq \frac 1{2\mu}\,\mathbb{E}\sup_{s \in\,[0,t]}\norma{H}{ v_\mu(s)}{H}^{2p}+\sum_{i=2}^p\frac 1{2\mu^i}\,\mathbb{E}
\sup_{s \in\,[0,t]}\norma{H}{ v_\mu(s)}{H}^{2(p-i)}\,\vert u_\mu(s)\vert_{H^1}^{2i}\\[10pt]
&\hslp +\sum_{i=1}^{p-1}\frac 1{4\mu^{i+1}}\,\mathbb{E} \,\int_0^t
\norma{H}{ v_\mu(s)}{H}^{2(p-i)}\,\vert u_\mu(s)\vert_{H^1}^{2i}\,ds+c\,\sum_{i=1}^{p-1}\frac 1{\mu^{i+1}}\,\mathbb{E} \,\int_0^t
\norma{H}{ v_\mu(s)}{H}^{2(p-i)}\,ds.
\end{split}
	\end{align}
Therefore, if we take the expectation of both sides in \eqref{sz177} and replace  \eqref{sz175} and \eqref{sz176} in it, we obtain
\begin{align}
\begin{split}
\mu^p\,&\mathbb{E}\sup_{s \in\,[0,t]}\norma{H}{ v_\mu(s)}{H}^{2p}+ \mathbb{E}\sup_{s \in\,[0,t]}\vert u_\mu(s)\vert_{H^1}^{2p}	+\mu^{p-1}\,\mathbb{E} \,\int_0^t\norma{H}{ v_\mu(s)}{H}^{2p}\,ds
\\[10pt]
&\leq c_{T,p}+\frac 1{\mu}\mathbb{E} \,\int_0^t\vert u_\mu(s)\vert_{H^1}^{2(p-1)}\,\Vert \sigma_0(u_\mu(s))\Vert_{\mathscr{T}_2(K,H)}^2\,ds+c\, \sum_{i=1}^p \mu^{(p-i)-1}\mathbb{E} \,\int_0^t
\norma{H}{ v_\mu(s)}{H}^{2(p-i)}\,ds.
\end{split}
\end{align}
In particular, by a recursive argument,  this implies \eqref{sz159}.

\end{proof}

\begin{Remark}
{\em 	In the case $p=2$, inequality \eqref{sz159} implies \begin{align} \begin{split}
\label{sz173}	
\mu^3\,\mathbb{E}\sup_{s \in\,[0,t]}\vert v_\mu(s)&\vert _{H}^{4}+ \mu\,\mathbb{E}\sup_{s \in\,[0,t]}\vert u_\mu(s)\vert_{H^1}^{4}+\mu^2\,\mathbb{E} \,\int_0^t\norma{H}{ v_\mu(s)}{H}^{4}\,ds\\[10pt]&\leq c_T
+ \mathbb{E} \,\int_0^t\vert u_\mu(s)\vert_{H^1}^{2}\,ds.
\end{split}
\end{align}

	\hfill\(\Box\)}
\end{Remark}

\begin{Lemma}
\label{lemma20}
Under Hypothesis \ref{hyp-H3}, for every $(u_0,v_0) \in\,\mathscr{H}_1\cap \mathscr{M}$ and $T>0$ there exists a constant $c_T>0$ such that for every $\mu \in\,(0,1)$ and $t \in\,[0,T]$
\begin{align}
\begin{split}
\mathbb{E}\sup_{t \in\,[0,T]}\vert u_\mu(t)\vert_{H^2}^2&+\mu\,\mathbb{E}\,\sup_{t \in\,[0,T]}\vert \partial_t u_\mu(t)\vert_{H^1}^2+ \mathbb{E} \int_0^t\vert \partial_tu_\mu(s)\vert_{H^1}^2\,ds\\[10pt]
&\leq \frac c\mu\mathbb{E} \,\int_0^t\vert u_\mu(s)\vert_{H^1}^2\,ds+  \frac{c_T}{\mu}.\label{sz16}
\end{split}
 \end{align}
\end{Lemma}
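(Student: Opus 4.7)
The strategy is to apply the It\^o formula to the augmented energy functional
\[
\Phi(u,v) := \tfrac{1}{2}\vert u\vert_{H^2}^2 + \tfrac{\mu}{2}\vert v\vert_{H^1}^2,\qquad (u,v)\in \mathscr{H}_1,
\]
evaluated along $(u_\mu(t),v_\mu(t))$. Since Theorem \ref{thm-existence-example} guarantees $(u_\mu,v_\mu)\in C([0,\infty);\mathscr{H}_1)$ for $z_0\in\mathscr{H}_1\cap\mathscr{M}$, this is justified (modulo the usual truncation/stopping-time argument that we take for granted). Using $du_\mu=v_\mu\,dt$ together with the second equation of \eqref{system}, the chain rule gives
\[
d\Phi(u_\mu,v_\mu)=\scalar{H}{\Delta u_\mu}{\Delta v_\mu}\,dt+\scalar{H^1}{v_\mu}{\Delta u_\mu+\vert u_\mu\vert_{H^1}^2 u_\mu-\mu\vert v_\mu\vert_H^2 u_\mu-\gamma v_\mu}\,dt
+\scalar{H^1}{v_\mu}{\sigma(u_\mu)\,d\WP}+\tfrac{1}{2\mu}\Vert \sigma(u_\mu)\Vert^2_{\mathscr{T}_2(K,H^1)}\,dt.
\]
The decisive observation is that, thanks to integration by parts with Dirichlet boundary conditions, $\scalar{H}{\Delta u_\mu}{\Delta v_\mu}+\scalar{H^1}{v_\mu}{\Delta u_\mu}=0$, so the two highest-order (and non-dissipative) terms cancel exactly. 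After this cancellation one is left only with the $-\gamma\vert v_\mu\vert_{H^1}^2\,dt$ dissipation, the two cubic contributions $\vert u_\mu\vert_{H^1}^2\scalar{H^1}{v_\mu}{u_\mu}$ and $-\mu\vert v_\mu\vert_H^2\scalar{H^1}{v_\mu}{u_\mu}$, the stochastic integral, and the It\^o correction.

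The first cubic term is harmless since it is an exact derivative:
\[
\vert u_\mu\vert_{H^1}^2\,\scalar{H^1}{v_\mu}{u_\mu}\,dt=\tfrac{1}{4}\,d\vert u_\mu\vert_{H^1}^4.
\]
The second cubic term, which is the main obstacle, is handled by Young's inequality,
\[
\mu\,\vert v_\mu\vert_H^2\,\vert\scalar{H^1}{v_\mu}{u_\mu}\vert\leq \tfrac{\gamma}{2}\vert v_\mu\vert_{H^1}^2+C\,\mu^2\vert v_\mu\vert_H^4\,\vert u_\mu\vert_{H^1}^2,
\]
where the last factor is controlled in expectation by the fourth-moment bounds \eqref{sz159} (in particular \eqref{sz173}), combined with Cauchy--Schwarz: both $\mu\,\mathbb{E}\sup_{[0,t]}\vert u_\mu\vert_{H^1}^4$ and $\mu^2\,\mathbb{E}\int_0^t\vert v_\mu\vert_H^4\,ds$ are bounded by $c_T+c\,\mathbb{E}\int_0^t\vert u_\mu\vert_{H^1}^2\,ds$ thanks to Hypothesis \ref{hyp-H3}, which is exactly the budget appearing on the right-hand side of \eqref{sz16}.

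The remaining two pieces are routine. For the stochastic integral the Burkholder--Davis--Gundy inequality, together with the bound $\sum_k\vert\scalar{H^1}{v_\mu}{\sigma(u_\mu)\tilde e_k}\vert^2\leq \vert v_\mu\vert_{H^1}^2\Vert \sigma(u_\mu)\Vert^2_{\mathscr{T}_2(K,H^1)}$ and Young's inequality, produces an absorbable fraction of $\mu\,\mathbb{E}\sup_{[0,T]}\vert v_\mu\vert_{H^1}^2$ plus a lower-order term of the form $\frac{c}{\mu}\,\mathbb{E}\int_0^t\Vert \sigma(u_\mu)\Vert^2_{\mathscr{T}_2(K,H^1)}\,ds$. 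The It\^o correction $\frac{1}{2\mu}\Vert\sigma(u_\mu)\Vert^2_{\mathscr{T}_2(K,H^1)}$ is disposed of by \eqref{sz120}, which gives $\Vert\sigma(u_\mu)\Vert^2_{\mathscr{T}_2(K,H^1)}\leq c(1+\vert u_\mu\vert_{H^1}^2)$; this contributes precisely $\frac{c}{\mu}\bigl(T+\mathbb{E}\int_0^t\vert u_\mu\vert_{H^1}^2\,ds\bigr)$, and accounts both for the factor $1/\mu$ in front of $c_T$ in \eqref{sz16} and for the $\frac{c}{\mu}\mathbb{E}\int \vert u_\mu\vert_{H^1}^2\,ds$ term on the right-hand side.

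Putting these pieces together, taking the supremum and then the expectation, absorbing the $\frac{\gamma}{2}\int \vert v_\mu\vert_{H^1}^2\,ds$ and $\frac{\mu}{4}\sup \vert v_\mu\vert_{H^1}^2$ terms into the left-hand side, and using that $\vert u_0\vert_{H^2}^2+\mu\vert v_0\vert_{H^1}^2\leq c$ for $(u_0,v_0)\in\mathscr{H}_1$, yields \eqref{sz16}. The chief difficulty is the treatment of the noise-induced cubic drift $\mu\vert v_\mu\vert_H^2 u_\mu$ after cross-testing with $v_\mu$ in $H^1$, which is where the previously established $L^{2p}$ bounds \eqref{sz159} are essential.
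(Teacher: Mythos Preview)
Your set-up (the It\^o formula for $\frac12|u_\mu|_{H^2}^2+\frac\mu2|v_\mu|_{H^1}^2$, the cancellation of the highest-order terms, the BDG estimate for the stochastic integral, and the use of \eqref{sz120} for the It\^o correction) matches the paper. The gap is in your treatment of the cubic drift $\mu|v_\mu|_H^2\,\langle u_\mu,v_\mu\rangle_{H^1}$. After your Young inequality you are left with $C\mu^2\int_0^t|v_\mu|_H^4|u_\mu|_{H^1}^2\,ds$, and you claim this is controlled by \eqref{sz173} ``combined with Cauchy--Schwarz''. But \eqref{sz173} only bounds $\mu\,\mathbb{E}\sup|u_\mu|_{H^1}^4$ and $\mu^2\,\mathbb{E}\int|v_\mu|_H^4\,ds$ \emph{separately}; any way of combining them to estimate the product $\mathbb{E}\bigl[\sup|u_\mu|_{H^1}^2\cdot\mu^2\int|v_\mu|_H^4\,ds\bigr]$ forces you through higher moments (e.g.\ $\mu^4\,\mathbb{E}\int|v_\mu|_H^8\,ds$), and iterating \eqref{sz159} under only Hypothesis~\ref{hyp-H3} produces at best a bound of order $\mu^{-2}$, not $\mu^{-1}$. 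The resulting estimate would read $\mathbb{E}\int|v_\mu|_{H^1}^2\,ds\le c_T\mu^{-3/2}(1+\mathbb{E}\int|u_\mu|_{H^1}^2\,ds)$ or worse, and when fed into \eqref{sz8} the Gronwall step blows up like $\exp(c\,\mu^{-1/2})$ as $\mu\to0$, so \eqref{sz70} fails.

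The paper avoids this loss entirely by a product-rule manoeuvre instead of Young's inequality: one writes
\[
\mu\,|v_\mu|_H^2\,d|u_\mu|_{H^1}^2=\mu\,d\bigl(|v_\mu|_H^2|u_\mu|_{H^1}^2\bigr)-|u_\mu|_{H^1}^2\,\mu\,d|v_\mu|_H^2,
\]
and then expands $\mu\,d|v_\mu|_H^2$ using the equation for $v_\mu$. Because $\langle u_\mu,v_\mu\rangle_H=0$ on $\mathscr{M}$, all the nonlinear drift contributions vanish and one obtains (see \eqref{sz18})
\[
\mu\,|v_\mu|_H^2\,d|u_\mu|_{H^1}^2=\mu\,d\bigl(|v_\mu|_H^2|u_\mu|_{H^1}^2\bigr)+|u_\mu|_{H^1}^2\,d|u_\mu|_{H^1}^2+2\gamma\,|u_\mu|_{H^1}^2|v_\mu|_H^2\,dt-\tfrac{1}{\mu}|u_\mu|_{H^1}^2\|\sigma(u_\mu)\|_{\mathscr{T}_2(K,H)}^2\,dt.
\]
The first term is an exact differential (absorbed into the left-hand side), the second \emph{cancels} your ``harmless'' $\frac12|u_\mu|_{H^1}^2\,d|u_\mu|_{H^1}^2$ term, the third is additional positive dissipation, and the last is bounded by $\frac{c}{\mu}|u_\mu|_{H^1}^2$ via \eqref{sz5}. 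This delivers the sharp $\mu^{-1}$ scaling in \eqref{sz16} directly, without ever invoking \eqref{sz159}.
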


\begin{proof}
The It\^o Lemma \ref{lem-Ito-final} gives\begin{align}
\begin{split}
\hs\frac 12 (\vert u_\mu(t)\vert_{H^2}^2	&+\mu\vert v_\mu(t)\vert_{H^1}^2)	=\scalar{H^2}{ u_\mu(t)}{v_\mu(t)}\,dt\\[10pt]
&+\scalar{H^1}{ v_\mu(t)}{\Delta u_\mu(t)+\vert u_\mu(t)\vert_{H^1}^2u_\mu(t)-\mu\vert v_\mu(t)\vert^2_H u_\mu(t)-\gamma v_\mu(t)}\,dt\\[10pt]
\label{sz17}&+\frac 1{2\mu}\Vert \sigma(u_\mu(t))\Vert_{\mathscr{T}_2(K,H^1)}^2\,dt+\scalar{H^1}{ \sigma(u_\mu(t))d\WP(t)}{v_\mu(t)}\\[10pt]
&\hs =\frac 12\vert u_\mu(t)\vert_{H^1}^2d\vert u_\mu(t)\vert_{H^1}^2-\frac{\mu}2\,\vert v_\mu(t)\vert_{H}^2\,d\vert u_\mu(t)\vert_{H^1}^2-\gamma\,\vert v_\mu(t)\vert_{H^1}^2\,dt\\[10pt]
&+\frac 1{2\mu}\Vert \sigma(u_\mu(t))\Vert_{\mathscr{T}_2(K,H^1)}^2\,dt+\scalar{H}{ \sigma(u_\mu(t))d\WP(t)}{v_\mu(t)}.
\end{split}
\end{align}
Now,
\begin{align}
\begin{split}
\mu\,\vert v_\mu(t)\vert_{H}^2\,d\vert u_\mu(t)\vert_{H^1}^2&=\mu\, d\left(\vert v_\mu(t)\vert_{H}^2\,\vert u_\mu(t)\vert_{H^1}^2\right)-2\vert u_\mu(t)\vert_{H^1}^2\scalar{H}{ v_\mu(t)}{\mu\,dv_\mu(t)}\\[10pt]
&-\frac 1{\mu}\vert u_\mu(t)\vert_{H^1}^2\Vert \sigma(u_\mu(t))\Vert^2_{\mathscr{T}_2(K,H)}\,dt\\[10pt]
\label{sz18}&\hs =\mu d\left(\vert v_\mu(t)\vert_{H}^2\,\vert u_\mu(t)\vert_{H^1}^2\right)	+\vert u_\mu(t)\vert_{H^1}^2\,d\vert u_\mu(t)\vert_{H^1}^2+2\gamma\,\vert u_\mu(t)\vert_{H^1}^2\vert v_\mu(t)\vert_{H}^2\,dt\\[10pt]
&-\frac 1{\mu}\vert u_\mu(t)\vert_{H^1}^2\Vert \sigma(u_\mu(t))\Vert^2_{\mathscr{T}_2(K,H)}dt.	
\end{split}
\end{align}
Thus, if we replace \eqref{sz18} into \eqref{sz17}, we get
\begin{align*}
\frac 12 d\left(\vert u_\mu(t)\vert_{H^2}^2	\right.&+\left.\mu\vert v_\mu(t)\vert_{H^1}^2+ \mu  \vert v_\mu(t)\vert_{H}^2\,\vert u_\mu(t)\vert_{H^1}^2\right)+\gamma\left(\vert u_\mu(t)\vert_{H^1}^2\norma{H}{ v_\mu(t)}{H}^2+\vert v_\mu(t)\vert_{H^1}^2\right)\,dt\\[10pt]
&\hs=\frac 1{2\mu}\left(\vert u_\mu(t)\vert^2_{H^1}\Vert \sigma(u_\mu(t))\Vert^2_{\mathscr{T}_2(K,H)}+\Vert \sigma(u_\mu(t))\Vert^2_{\mathscr{T}_2(K,H^1)}\right)\,dt\\[10pt]
&\hsp+\scalar{H^1}{ \sigma(u_\mu(t))d\WP(t)}{v_\mu(t)}.
\end{align*}
In view of  inequalities \eqref{sz5} and \eqref{sz120} in Lemma \ref{lem-basic estimates on sigma}, this implies
\begin{align*}
\frac 12 d\left(\vert u_\mu(t)\vert_{H^2}^2	\right.&+\left.\mu\vert v_\mu(t)\vert_{H^1}^2+ \mu  \vert v_\mu(t)\vert_{H}^2\,\vert u_\mu(t)\vert_{H^1}^2\right)+\gamma\left(\vert u_\mu(t)\vert_{H^1}^2\norma{H}{ v_\mu(t)}{H}^2+\vert v_\mu(t)\vert_{H^1}^2\right)\,dt\\[10pt]
&\leq \frac c{2\mu}\left(\vert u_\mu(t)\vert^2_{H^1}+1\right)\,dt+\scalar{H^1}{ \sigma(u_\mu(t))d\WP(t)}{v_\mu(t)}.
\end{align*}
Therefore, there exists a constant $c_T>0$ such that after we integrate with respect to time and take the supremum, for every $\mu \in\,(0,1)$ we obtain
\begin{align}
\begin{split}
\sup_{s\in [0, t]}\vert u_\mu(s)\vert_{H^2}^2&	+\mu\sup_{s\in [0, t]}\vert v_\mu(s)\vert_{H^1}^2 +\int_0^t\vert v_\mu(s)\vert_{H^1}^2\,ds\\[10pt]
\label{sz20}&\hsllp\leq \frac{c_T}{\mu}+\frac c{\mu}\int_0^t\vert u_\mu(s)\vert^2_{H^1}\,ds+c\,\sup_{s\in [0, t]}\left|\int_0^s \scalar{H^1}{ \sigma(u_\mu(r))d\WP(r)}{v_\mu(r)}\right|.
\end{split}
\end{align}
By the Davis inequality, see \cite{Pardoux_1976},   and Hypothesis \ref{hyp-H3}, we have
\begin{align}\begin{split}
	&\mathbb{E}\sup_{s\in [0, t]}\left|\int_0^s \scalar{H^1}{ \sigma(u_\mu(r))d\WP(r)}{v_\mu(r)}\right|\leq c\,
\mathbb{E} \left(\int_0^t  \Normb{\mathscr{T}_2(K,H^1)}{\sigma(u_\mu(r))}^2\normb{H^1} {v_\mu(r)}^2 \, dr \right)^{\frac12}
\\[10pt]
&\leq  c\, \mathbb{E} \left(\int_0^t \normb{H^1} {v_\mu(r)}^2 \left(1+\normb{H^1} {u_\mu(r)}^2   \right)\, dr \right)^{\frac12}
\\[10pt]
 & \leq  c\, \mathbb{E} \left(\int_0^t \normb{H^1} {v_\mu(r)}^2 \, dr \right)^{\frac12}
 +c\, \mathbb{E} \left(\int_0^t \normb{H^1} {v_\mu(r)}^2 \normb{H^1} {u_\mu(r)}^2   \, dr \right)^{\frac12}
 \\[10pt]
 & \leq  c+ \frac12\, \mathbb{E} \int_0^t \normb{H^1} {v_\mu(r)}^2 \, dr  + c\, \mathbb{E}  \left(  \sup_{s \in\,[0,t]}\normb{H^1} {v_\mu(s)}   \int_0^t \normb{H^1} {u_\mu(r)}^2   \, dr  \right)^{\frac12}
  \\[10pt]
 & \leq  c+ \frac12\, \mathbb{E} \int_0^t \normb{H^1} {v_\mu(r)}^2 \, dr  + \frac{\mu}{2}\,  \mathbb{E}  \sup_{s \in\,[0,t]}  \normb{H^1} {v_\mu(s)}^2
+\frac{c}{\mu}\, \mathbb{E}    \int_0^t \normb{H^1} {u_\mu(r)}^2   \, dr.
\label{eqn-new-Davis}	
\end{split}\end{align}

Combining \eqref{eqn-new-Davis} and \eqref{sz20} we get

\begin{align}\label{sz20-1}
\mathbb{E} \sup_{s\in [0, t]}\vert u_\mu(s)\vert_{H^2}^2	&+\mu\, \mathbb{E} \sup_{s\in [0, t]}\vert v_\mu(s)\vert_{H^1}^2 + \mathbb{E} \int_0^t\vert v_\mu(s)\vert_{H^1}^2\,ds\\[10pt]
&\leq \frac c{\mu} \mathbb{E} \int_0^t\vert u_\mu(s)\vert^2_{H^1}\,ds+ \frac{c_T}{\mu},
\end{align}
and this  implies \eqref{sz16}.
\end{proof}

\begin{Lemma}
\label{lemma1}
Under Hypothesis \ref{hyp-H3}, for every $(u_0,v_0) \in\,\mathscr{H}\cap \mathscr{M}$, such that $v_0 \in\,H^1$, and for every $T>0$ there exists a constant $c_T>0$ such that for every $\mu \in\,(0,1)$ and $t \in\,[0,T]$
\begin{align}
\begin{split}
\label{sz8}
\mathbb{E}\vert u_\mu(t)\vert_{H^1}^2\leq c_T+c\,\mu\,\mathbb{E} \,\int_0^t\vert \partial_t u_\mu(s)\vert_{H^1}^2\,ds.
\end{split}
\end{align}

\end{Lemma}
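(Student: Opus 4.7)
The plan is to apply It\^o's formula to the Lyapunov-type functional
\[
F_\mu(u,v) := \tfrac{\gamma}{2}\vert u\vert_{H^1}^2 + \mu\,\scalar{H^1}{u}{v},
\]
after a regularization of the initial datum so that, provisionally, $(u_0,v_0)\in \mathscr{H}_1\cap\mathscr{M}$ and Lemma \ref{lemma20} is available. The cross term is engineered so that when $\mu\scalar{H^1}{u_\mu}{dv_\mu}$ is expanded via the second equation in \eqref{system}, the friction contribution $-\gamma\scalar{H^1}{u_\mu}{v_\mu}\,dt$ cancels the $\gamma\scalar{H^1}{u_\mu}{v_\mu}\,dt$ arising from $d\bigl[\tfrac{\gamma}{2}\vert u_\mu\vert_{H^1}^2\bigr]$. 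Since $F_\mu$ is linear in $v$ the It\^o correction vanishes; using $\scalar{H^1}{u}{\Delta u}=-\vert u\vert_{H^2}^2$ one arrives at
\[
dF_\mu(u_\mu,v_\mu) = \Bigl[\mu\vert v_\mu\vert_{H^1}^2 - \vert u_\mu\vert_{H^2}^2 + \vert u_\mu\vert_{H^1}^4 - \mu\vert v_\mu\vert_{H}^2\vert u_\mu\vert_{H^1}^2\Bigr]\,dt + \scalar{H^1}{u_\mu}{\sigma(u_\mu)\,d\WP}.
\]

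The constraint $u_\mu\in M$ combined with the interpolation $\vert u_\mu\vert_{H^1}^4\leq \vert u_\mu\vert_H^2\vert u_\mu\vert_{H^2}^2 = \vert u_\mu\vert_{H^2}^2$ makes $\vert u_\mu\vert_{H^1}^4 - \vert u_\mu\vert_{H^2}^2\leq 0$; dropping this and the non-positive friction term, integrating in time, and taking expectation gives
\[
\mathbb{E}F_\mu(t) \leq F_\mu(0) + \mu\,\mathbb{E}\int_0^t\vert v_\mu(s)\vert_{H^1}^2\,ds.
\]
A Young inequality applied to the cross term of $F_\mu$ provides the lower bound $F_\mu(t) \geq \tfrac{\gamma}{4}\vert u_\mu\vert_{H^1}^2 - \tfrac{\mu^2}{\gamma}\vert v_\mu\vert_{H^1}^2$, so combining the two I obtain
\[
\tfrac{\gamma}{4}\mathbb{E}\vert u_\mu(t)\vert_{H^1}^2 \leq F_\mu(0) + \mu\,\mathbb{E}\int_0^t\vert v_\mu\vert_{H^1}^2\,ds + \tfrac{\mu^2}{\gamma}\mathbb{E}\vert v_\mu(t)\vert_{H^1}^2.
\]

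The hard part is precisely the residual pointwise term $\mu^2\mathbb{E}\vert v_\mu(t)\vert_{H^1}^2$: it cannot be absorbed by $\mu\mathbb{E}\int_0^t\vert v_\mu\vert_{H^1}^2\,ds$ directly, and naive Young/Gronwall arguments would introduce an $e^{c/\mu}$ blow-up. The resolution is to bring in Lemma \ref{lemma20}: multiplying its conclusion through by $\mu$ and using $\vert v_\mu(t)\vert_{H^1}^2\leq \sup_{s\leq t}\vert v_\mu(s)\vert_{H^1}^2$ yields
\[
\mu^2\,\mathbb{E}\vert v_\mu(t)\vert_{H^1}^2 \leq c\,\mathbb{E}\int_0^t\vert u_\mu(s)\vert_{H^1}^2\,ds + c_T.
\]
Substituting produces
\[
\mathbb{E}\vert u_\mu(t)\vert_{H^1}^2 \leq c_T + c\mu\,\mathbb{E}\int_0^t\vert v_\mu\vert_{H^1}^2\,ds + c\,\mathbb{E}\int_0^t\vert u_\mu\vert_{H^1}^2\,ds,
\]
and since the forcing $c_T + c\mu\,\mathbb{E}\int_0^t\vert v_\mu\vert_{H^1}^2\,ds$ is non-decreasing in $t$, Gronwall's lemma delivers exactly \eqref{sz8}. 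The extension to general $(u_0,v_0)\in\mathscr{H}\cap\mathscr{M}$ with $v_0\in H^1$ follows by density and continuous dependence on the initial data.
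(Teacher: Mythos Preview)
Your argument is correct but takes a slightly more roundabout path than the paper's. Both proofs start from the same It\^o computation of $\mu\,d\scalar{H^1}{u_\mu}{v_\mu}$ and use the same interpolation inequality $\vert u\vert_{H^1}^4\le\vert u\vert_{H^2}^2$ on $M$. The difference lies in how the cross term $\mu\scalar{H^1}{u_\mu(t)}{v_\mu(t)}$ at the final time is handled. You bound it below via Young's inequality, which produces the residual $\tfrac{\mu^2}{\gamma}\mathbb{E}\vert v_\mu(t)\vert_{H^1}^2$; this forces you to invoke Lemma~\ref{lemma20} (hence, provisionally, $\mathscr{H}_1$ data and a subsequent density argument) and then close with Gronwall. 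The paper instead observes that $\mu\scalar{H^1}{u_\mu(t)}{v_\mu(t)}=\tfrac{\mu}{2}\tfrac{d}{dt}\vert u_\mu(t)\vert_{H^1}^2$ and that the friction contribution integrates to $-\tfrac{\gamma}{2}\vert u_\mu(t)\vert_{H^1}^2$ (up to initial data), giving the differential inequality
\[
\frac{d}{dt}\mathbb{E}\vert u_\mu(t)\vert_{H^1}^2+\frac{\gamma}{\mu}\,\mathbb{E}\vert u_\mu(t)\vert_{H^1}^2\le \frac{c_T}{\mu}+c\int_0^t\mathbb{E}\vert v_\mu(s)\vert_{H^1}^2\,ds,
\]
which is solved directly by a Duhamel/comparison argument. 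The $\tfrac{\gamma}{\mu}$ damping absorbs everything and yields \eqref{sz8} without any appeal to Lemma~\ref{lemma20} or a density step. Your route is perfectly valid, but the paper's is self-contained and avoids both the external estimate and the approximation argument.
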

\begin{proof}
The It\^o Lemma \ref{lem-Ito-final} gives
\begin{align*}
\mu\, d\scalar{H^1}{ u_\mu(t)}{v_\mu(t)}&=\mu\, \scalar{H^1}{ du_\mu(t)}{v_\mu(t)}+ \scalar{H^1}{ u_\mu(t)}{ \mu\, dv_\mu(t)}\\[10pt]
&\hspace{-3truecm}=\left(\mu\,\vert v_\mu(t)\vert_{H^{1}}^2+\scalar{H^1}{ u_\mu(t)}{\Delta u_\mu(t)}+\vert u_\mu(t)\vert_{H^1}^2\vert u_\mu(t)\vert_{H^{1}}^2-\mu\,\vert v_\mu(t)\vert_{H}^2\vert u_\mu(t)\vert_{H^1}^2\right.\\[10pt]
&\hsl\left.-\gamma\scalar{H^1}{ v_\mu(t)}{u_\mu(t)} \right)\,dt+\scalar{H^1}{ \sigma(u_\mu(t))d\WP(t)}{u_\mu(t)}\\[10pt]
&\hspace{-3truecm}=\left(\mu\,\vert v_\mu(t)\vert_{H^{1}}^2-\vert u_\mu(t)\vert_{H^{2}}^2+\vert u_\mu(t)\vert_{H^1}^4-\mu\,\vert v_\mu(t)\vert_{H}^2\vert u_\mu(t)\vert_{H^{1}}^2\right)\,dt\\[10pt]
&\hsl-\frac\gamma 2 d\vert u_\mu(t)\vert_{H^{1}}^2 +\scalar{H^1}{ \sigma(u_\mu(t))d\WP(t)}{u_\mu(t)}.
\end{align*}
Hence, if we integrate both sides above with respect to time, we get
\begin{align*}
\begin{split}
\frac \mu 2\,\frac{d}{dt} \vert u_\mu(t)&\vert_{H^{1}}^2+\frac \gamma 2\vert u_\mu(t)\vert_{H^{1}}^2+\int_0^t \left(\vert u_\mu(s)\vert_{H^{2}}^2-\vert u_\mu(t)\vert_{H^1}^4\right)\,ds\\[10pt]
&\hsl\leq \mu\,\int_0^t \vert v_\mu(s)\vert_{H^{1}}^2\,ds+\int_0^t \scalar{H^1}{ \sigma(u_\mu(s))d\WP(s)}{u_\mu(s)}+\frac \mu 2\scalar{H^1}{ u_0}{v_0}+\frac \gamma 2\vert u_0\vert_{H^{1}}^2.\end{split}
\end{align*}
Now, thanks to \eqref{jan2} for every $u \in\,H^{2}\cap M$  we have $
\norm{u}{H^1}^4\leq \norm{u}{H^{2}}^2$. Therefore we can find  $c>0$ independent of  $\mu \in\,(0,1)$ such that
\begin{align}
\label{sz225}
\frac{d}{dt} \vert u_\mu(t)\vert_{H^{1}}^2&+\frac \gamma \mu\vert u_\mu(t)\vert_{H^{1}}^2\leq \frac{c}\mu + c\int_0^t \vert v_\mu(s)\vert_{H^{1}}^2\,ds
\\&+\frac{2}\mu\int_0^t \scalar{H^1}{ \sigma(u_\mu(s))d\WP(s)}{u_\mu(s)}.
\end{align}
In particular, if we take the expectation of both sides in \eqref{sz225}
we get
\[\frac{d}{dt} \mathbb{E}\,\vert u_\mu(t)\vert_{H^{1}}^2+\frac 1 \mu\,\mathbb{E}\,\vert u_\mu(t)\vert_{H^{1}}^2\leq \frac{c_T}\mu +c\int_0^t \mathbb{E}\,\vert v_\mu(s)\vert_{H^{1}}^2\,ds.\]
Finally, by a comparison argument this implies \eqref{sz8}.

\end{proof}

\begin{Remark}
{\em By combining together \eqref{sz16} and \eqref{sz8}, we have	 that for every $(u_0,v_0) \in\,\mathscr{H}_1\cap \mathscr{M}$ and $T>0$ there exists  $c_T(u_0,v_0)>0$ such that for every $\mu \in\,(0,1)$
\[\mathbb{E}\vert u_\mu(t)\vert_{H^1}^2\leq c_T(u_0,v_0)+c\,\mathbb{E} \,\int_0^t\vert u_\mu(s)\vert_{H^1}^2\,ds,\;\;\; t \in\,[0,T].\]
Hence, from the Gronwall Lemma we conclude
\begin{equation}
\label{sz70}
\mathbb{E}\vert u_\mu(t)\vert_{H^1}^2\leq c_T(u_0,v_0),\;\;\; t \in\,[0,T].
\end{equation}
In view of \eqref{sz16}, this also implies that
\begin{equation}
\label{sz154}
\int_0^T \mathbb{E}\vert\partial_t u_\mu(s)\vert_{H^1}^2\,ds\leq \frac 1\mu\,c_T(u_0,v_0).	
\end{equation}
Moreover, thanks to  \eqref{sz173} and \eqref{sz70}, we get
 \begin{equation}
\label{sz24}
\sup_{\mu \in\,(0,1)}\,\mu^{3/2}\,\mathbb{E}\sup_{t \in\,[0,T]}\vert \partial_t u_\mu(t)\vert_{H}^2<\infty.	
\end{equation}
 \hfill\(\Box\)}
\end{Remark}

\section{A priori bounds. Part II}
\label{sec7}

Now, we want to show that in fact, if the initial condition $(u_0,v_0)$ belongs to $\mathscr{H}_2\cap \mathscr{M}$, then  the solution $(u_\mu, \partial u_\mu)$ belongs to $L^2(\Omega;C([0,T];\mathscr{H}_2\cap \mathscr{M}))$ and suitable uniform bounds with respect to $\mu \in\,(0,1)$ are satisfied. In what follows, it will be fundamental to assume that Hypotheses \ref{hyp-H4} and \ref{hyp-H5} hold.

\begin{Lemma} \label{Lemma7.1}
Under Hypotheses \ref{hyp-H3} and \ref{hyp-H4}, for every $(u_0,v_0) \in\,\mathscr{H}_1\cap \mathscr{M}$ and $T>0$ there exists a constant $c_T>0$ such that for every $\mu \in\,(0,1)$	we have
\begin{align}
\begin{split}
\mu^3\,\mathbb{E}&\sup_{t \in\,[0,T]}\vert \partial_tu_\mu(t)\vert_{H^1}^4+\mu\,\mathbb{E}\sup_{t \in\,[0,T]}\vert u_\mu(t)\vert_{H^2}^4+\mu^2\int_0^T \mathbb{E}\vert \partial_t u_\mu(s)\vert_{H^1}^4\,ds\\[10pt]
\label{sz29}&\hsp+\mu\,\int_0^T\mathbb{E}\vert u_\mu(s)\vert_{H^2}^2\vert \partial_t u_\mu(s)\vert_{H^1}^2\,ds \leq c_T+c_T\int_0^T\mathbb{E}\,\vert u_\mu(t)\vert_{H^2}^2\,dt.
\end{split}	
\end{align}

\end{Lemma}
\begin{proof}
In order to prove \eqref{sz29} we  apply the It\^o Lemma \ref{lem-Ito-final} to the function
\[K: H^1 \ni v \mapsto \vert v\vert_{H^1}^4 \in \mathbb{R}\]
and the $H^1$-valued   process $v_\mu(t)$. Since $DK(v)=4\,\vert v\vert^2_H v$, we get
\begin{align*}
\begin{split}
d\,\vert v_\mu(t)\vert_{H^1}^4=&	\frac{4}\mu\,\vert v_\mu(t)\vert_{H^1}^2\scalar{H^1}{ v_\mu(t)}{\Delta u_\mu(t)+\vert u_\mu(t)\vert_{H^1}^2u_\mu(t)-\mu \norma{H}{ v_\mu(t)}{H}^2 u_\mu(t)-\gamma v_\mu(t)}\,dt\\[10pt]
&\hsl+\frac 1{2\mu^2}\,\sum_{j=1}^\infty D^2K(v_\mu(t))\,(\sigma(u_\mu(t))\tilde{e_j},\sigma(u_\mu(t))\tilde{e_j})\,dt\\[10pt]
&\hsl+\frac 4\mu \vert v_\mu(t)\vert_{H^1}^2\scalar{H^1}{ \sigma(u_\mu(t))d\WP(t)}{v_\mu(t)}.	
\end{split}
\end{align*}
Thus, if we denote
\[I(u,v):=\sum_{j=1}^\infty D^2K(v)\,(\sigma(u)\tilde{e_j},\sigma(u)\tilde{e_j}),\]
we have
\begin{align}
\begin{split}
\label{sz132}
d\,\vert v_\mu(t)\vert_{H^1}^4&+\frac {4\gamma}\mu \vert v_\mu(t)\vert_{H^1}^4=-\frac 2\mu\vert v_\mu(t)\vert_{H^1}^2\,d\vert u_\mu(t)\vert _{H^2}^2+\frac 2\mu \vert v_\mu(t)\vert_{H^1}^2 \vert u_\mu(t)\vert_{H^1}^2\,d\vert u_\mu(t)\vert_{H^1}^2\\[10pt]
&\hsl-2\,\vert v_\mu(t)\vert_{H^1}^2 \vert v_\mu(t)\vert_{H}^2\,d\vert u_\mu(t)\vert_{H^1}^2+\frac 1{2\mu^2}\,I(v_\mu(t),u_\mu(t))\,dt\\[10pt]
&+\frac 4\mu \vert v_\mu(t)\vert_{H^1}^2\scalar{H^1}{ \sigma(u_\mu(t))d\WP(t)}{v_\mu(t)}.
\end{split}
\end{align}

The It\^o \ref{lem-Ito-final} Lemma gives
\begin{align*}
d\vert v_\mu(t)\vert_{H^1}^2&=\frac 2\mu \scalar{H}{ v_\mu(t)}{\Delta u_\mu(t)+\vert u_\mu(t)\vert_{H^1}^2u_\mu(t)-\mu \norma{H}{ v_\mu(t)}{H}^2 u_\mu(t)-\gamma v_\mu(t)}\,dt\\[10pt]
&\hslp +\frac 1{\mu^2} \Vert \sigma(u_\mu(t))\Vert^2_{\mathscr{T}_2(K,H^1)}\,dt+\frac 2\mu\,\scalar{H^1}{ \sigma(u_\mu(t))d\WP(t)}{v_\mu(t)}	\\[10pt]
&= -\frac 1{\mu}d\,\vert u_\mu(t)\vert_{H^2}^2+\frac 1{2\mu}d\,\vert u_\mu(t)\vert_{H^1}^4-\norma{H}{ v_\mu(t)}{H}^2d\vert u_\mu(t)\vert_{H^1}^2-\frac {2\gamma}\mu\vert v_\mu(t)\vert_{H^1}^2\,dt\\[10pt]
&\hslp +\frac 1{\mu^2} \Vert \sigma(u_\mu(t))\Vert^2_{\mathscr{T}_2(K,H^1)}\,dt+\frac 2\mu\,\scalar{H^1}{ \sigma(u_\mu(t))d\WP(t)}{v_\mu(t)}.
\end{align*}
Hence, if we define
\[\Phi(u):=\norm{u}{H^2}^2-\frac 12\norm{u}{H^1}^4,\]
it is not difficult to check that
\begin{align} \begin{split}
-\frac 2\mu&\vert v_\mu(t)\vert_{H^1}^2\,d\vert u_\mu(t)\vert _{H^2}^2+\frac 2\mu \vert v_\mu(t)\vert_{H^1}^2 \vert u_\mu(t)\vert_{H^1}^2\,d\vert u_\mu(t)\vert_{H^1}^2\\[10pt]
&=-\frac 2\mu d\left(\Phi(u_\mu(t))\vert v_\mu(t)\vert_{H^1}^2\right)-\frac 2{\mu^2}\,	\Phi( u_\mu(t))\,d\Phi(u_\mu(t))-\frac 2\mu\Phi(u_\mu(t))\vert v_\mu(t)\vert_{H}^2\,d\vert u_\mu(t)\vert_{H^1}^2 \\[10pt]
\label{sz137}& -\frac {4\gamma}{\mu^2}\Phi(u_\mu(t))\vert v_\mu(t)\vert_{H^1}^2dt+\frac 2{\mu^3}\,\Phi(u_\mu(t))\,\Vert \sigma(u_\mu(t))\Vert^2_{\mathscr{T}_2(K,H^1)}dt\\[10pt]
&+\frac {4}{\mu^2}\,\Phi(u_\mu(t))\scalar{H^1}{ \sigma(u_\mu(t))d\WP(t)}{v_\mu(t)}.
\end{split}  \end{align}

In particular, if we replace \eqref{sz137} into \eqref{sz132}, we get
\begin{align}
\begin{split}  \label{sz138}
d\,\Big(\vert &v_\mu(t)\vert_{H^1}^4+\frac 2\mu \Phi(u_\mu(t))\vert v_\mu(t)\vert_{H^1}^2+\frac 1{\mu^2}\,	\Phi^2( u_\mu(t))\Big)+\Big(\frac {4\gamma}\mu \vert v_\mu(t)\vert_{H^1}^4+\frac {4\gamma}{\mu^2}\Phi(u_\mu(t))\vert v_\mu(t)\vert_{H^1}^2\Big)\,dt\\[10pt]
&=-\frac 2\mu\left(\Phi(u_\mu(t))+\mu\,\vert v_\mu(t)\vert_{H^1}^2\right) \vert v_\mu(t)\vert_{H}^2\,d\vert u_\mu(t)\vert_{H^1}^2  \\[10pt]
&\hsllp +\frac 2{\mu^3}\,\Phi(u_\mu(t))\,\Vert \sigma(u_\mu(t))\Vert^2_{\mathscr{T}_2(K,H^1)}dt+\frac 1{2\mu^2}\,I(v_\mu(t),u_\mu(t))\,dt\\[10pt]
&\hsllp+\frac 4{\mu^2}\,\Phi(u_\mu(t))\scalar{H^1}{ \sigma(u_\mu(t))d\WP(t)}{v_\mu(t)}+\frac 4\mu \vert v_\mu(t)\vert_{H^1}^2\scalar{H^1}{ \sigma(u_\mu(t))d\WP(t)}{v_\mu(t)}.
\end{split}
\end{align}

Now, since similarly to \eqref{eqn-d^2K_2p} we have
\begin{equation}
\label{eqn-d^2K_4}
D^2K(v)=
8 \scalar{H^1}{v}{\cdot}\scalar{H^1}{v}{\cdot} +4\,\normb{H^1}{ v}^{2} \scalar{H^1}{\cdot}{\cdot}.
\end{equation}
remembering that $\normb{H}{u}=1$ from \eqref{eqn-singma-sigma_0} we infer that
\begin{align} \label{sz124}
D^2 K\,(\sigma(u)\tilde{e}_j,\sigma(u)\tilde{e}_j)&= 8 \scalar{H^1}{v}{\sigma(u)\tilde{e}_j}^2
+4\,\normb{H^1}{ v}^{2} \normb{H^1}{\sigma(u)\tilde{e}_j }^{2}
\\[10pt]
& \hs\leq  c\, \normb{H^1}{ v}^{2} \normb{H^1}{\sigma(u)\tilde{e}_j }^{2}
\leq c\, \normb{H^1}{ v}^{2} \left( \normb{H^1}{\sigma_0(u)\tilde{e}_j }^{2}  +
\normb{H}{\sigma_0(u)\tilde{e}_j }^2 \dela{\normb{H}{u}^2} \normb{H^1}{u}^2 \right).
 \end{align}
Therefore, if we sum both sides in \eqref{sz124} with respect to $j \in\,\mathbb{N}$,  we get
\begin{align}
\label{sz127}
I(v,u)&=\sum_{j=1}^\infty D^2K\,(\sigma(u)\tilde{e}_j,\sigma(u)\tilde{e}_j)
\\
&\leq c\, \normb{H^1}{ v}^{2} \left( \Normb{\mathscr{T}_2(K,H^1)}{\sigma_0(u)}^{2}  +
\Normb{\mathscr{T}_2(K,H)}{\sigma_0(u)}^2 \dela{\normb{H}{u}^2} \normb{H^1}{u}^2 \right).
\end{align}
Thus, in view of Hypotheses \ref{hyp-H3} and  \ref{hyp-H4}
we infer that for some positive $c>0$,
 \begin{equation}
	\label{sz131}
\sup_{u \in\,H^1\cap M}\,I(v,u)\leq c\,\vert v\vert_{H^1}^2,\;\;\; v \in\,H^1.
\end{equation}

\dela{
\[D^2K(v)=8 (-\Delta)v\otimes (-\Delta)v+4\,\vert v\vert_{H^1}^2(-\Delta),\]
for every $k \in\,\mathbb{N}$ we have
\begin{align}
\begin{split}
D^2K(v)&\,(\sigma(u)Qe_j,\sigma(u)Qe_j)\\[10pt]
&\hsl=8\,\scalar{H}{ [\sigma_0(u) Q]^\star (-\Delta)v}{e_j}^2-8\scalar{H}{ [\sigma_0(u) Q]^\star u}{e_j} \scalar{H}{ [\sigma_0(u) Q]^\star (-\Delta) v}{e_j}\scalar{H^1}{ u}{\Delta u_\mu(t)}\\[10pt]
& +4\,\vert v\vert_{H^1}^2\vert \sigma_0(u)Q e_j\vert^2_{H^1}-4\,\vert v\vert_{H^1}^2\scalar{H}{ [\sigma_0(u) Q]^\star (-\Delta) u}{e_j} \scalar{H}{ [\sigma_0(u) Q]^\star  u}{e_j}\\[10pt]
& -\scalar{H}{ e_j}{[\sigma_0(u) Q]^\star u}\Big(8\coma{\langle [\sigma_0(u) Q]^\star (-\Delta)v,_k\rangle_H}   \scalar{H^1}{ u}{\Delta u_\mu(t)}-
8\scalar{H}{ e_j}{[\sigma_0(u) Q]^\star u} \scalar{H^1}{ u}{\Delta u_\mu(t)}^2\\[10pt]
& -4\,\vert v\vert_{H^1}^2\scalar{H}{ e_j}{[\sigma_0(u) Q]^\star (-\Delta)u}+4\,\vert v\vert_{H^1}^2\scalar{H}{ e_j}{[\sigma_0(u) Q]^\star u}^2 |u|^2_{H^1}\Big).
	\end{split}	
 \end{align}
For arbitrary $v, z \in\,H$, we have
\[\sum_{j=1}^\infty \scalar{H}{ [\sigma_0(u)Q]^\star v}{e_j} \scalar{H}{ [\sigma_0(u)Q]^\star (-\Delta) z}{e_j}=\scalar{H^1}{ [\sigma_0(u)Q]^\star [\sigma_0(u)Q] v}{z}.\]
Therefore, if we sum both sides in \eqref{sz124} with respect to $k \in\,\mathbb{N}$,  we get
\begin{align}
\begin{split}	
&I(v,u)=\,8\,\norma{H}{ [\sigma_0(u)Q]^\star(-\Delta)v}{H}^2	-16\,\scalar{H^1}{ u}{\Delta u_\mu(t)}\scalar{H^1}{ [\sigma_0(u)Q]^\star [\sigma_0(u)Q] u}{\Delta u_\mu(t)}\\[10pt]
&\hsp+4\,\vert v\vert_{H^1}^2\Vert \sigma_0(u)\Vert^2_{\mathscr{T}_2(K,H^1)}-8\,\vert v\vert_{H^1}^2 \scalar{H^1}{ [\sigma_0(u)Q]^\star [\sigma_0(u)Q] u}{u}\\[10pt]
 &\hsp+8\, \scalar{H^1}{ u}{\Delta u_\mu(t)}^2\norma{H}{ [\sigma_0(u)Q]^\star u}{H}^2+4\,\vert v\vert_{H^1}^2 \vert u\vert^2_{H^1}\norma{H}{ [\sigma_0(u)Q]^\star u}{H}^2.
\end{split}
\end{align}
For every $h \in\,H$, we have
\begin{align*}
	\left|\scalar{H}{ [\sigma_0(u)Q]^\star(-\Delta)v}{h}\right|=\left|\scalar{H^1}{ v}{[\sigma_0(u)Q]^\star h}\right|\leq \Vert \sigma_0(u)\Vert_{\mathscr{L}(K,H^1)}\norma{H}{ h}{H}\vert v\vert_{H^1},
\end{align*}
so that
\coma{\begin{equation}
\label{sz231}
\norma{H}{ [\sigma_0(u)Q]^\star(-\Delta)v}{H}\leq c\,	\Vert \sigma_0(u)\Vert_{\mathscr{T}_2(K,H^1)}\vert v\vert_{H^1}.
\end{equation}}
Thus, if we use estimates \eqref{sz230} and \eqref{sz231}  in \eqref{sz127}, we easily obtain
\begin{align*}
&I(v,u)\leq c\,\vert v\vert^2_{H^1}\Vert \sigma_0(u)\Vert_{\mathscr{T}_2(K,H^1)}^2\\[10pt]
&\hsp+c\,\vert v\vert_{H^1}^2\Vert \sigma_0(u)\Vert_{\mathscr{T}_2(K,H^1)}\Vert \sigma_0(u)\Vert_{\mathscr{T}_2(K,H)}\norm{u}{H^1}+c\,\vert v\vert_{H^1}^2\Vert \sigma_0(u)\Vert_{\mathscr{T}_2(K,H)}^2 \norm{u}{H^1}^2,	
\end{align*}
and thanks to Hypothesis \ref{hyp-H4} and Hypothesis \ref{hyp-H3}, i.e.  the boundedness of $\sigma_0:H^1\to \mathscr{T}_2(K,H^1)$, this implies \begin{equation}
\sup_{u \in\,H^1\cap M}\,I(v,u)\leq c\,\vert v\vert_{H^1}^2,\;\;\; v \in\,H^1.
\end{equation}
}

According to \eqref{jan2}, we have
\begin{equation}\label{inequality-gradient flow}
\frac 12 \norm{u}{H^2}^2\leq \Phi(u)\leq \norm{u}{H^2}^2.
\end{equation}
Then, if we integrate  both sides in \eqref{sz138} with respect to time, thanks to \eqref{sz131} we obtain
\begin{align}
\begin{split}
\label{sz145}
\vert v_\mu(t)\vert_{H^1}^4	+&\frac 1{\mu^2}\vert u_\mu(t)\vert^4_{H^2}+\frac 1\mu \int_0^t\vert v_\mu(s)\vert_{H^1}^4\,ds+\frac 1{\mu^2}\int_0^t\vert u_\mu(s)\vert^2_{H^2}\vert v_\mu(s)\vert_{H^1}^2\,ds\\[10pt]
&\hsl\leq \frac{c}{\mu^2}+\frac c\mu\int_0^t\vert u_\mu(s)\vert_{H^2}^2 \vert v_\mu(s)\vert_{H}^2\vert\scalar{H^1}{ u_\mu(s)}{v_\mu(s)}\vert\,ds\\[10pt]
&+\int_0^t\vert v_\mu(s)\vert_{H^1}^2 \vert v_\mu(s)\vert_{H}^2\left| \scalar{H^1}{ u_\mu(s)}{v_\mu(s)}\right|\,ds \\[10pt]
&+\frac c{\mu^3}\,\int_0^t\vert u_\mu(s)\vert_{H^2}^2\,\Vert \sigma(u_\mu(s))\Vert^2_{\mathscr{T}_2(K,H^1)}\,ds+\frac c{\mu^2}\,\int_0^t \vert v_\mu(s)\vert_{H^1}^2\,ds\\[10pt]
&+\frac 4{\mu^2}\,\int_0^t\Phi(u_\mu(s))\scalar{H^1}{ \sigma(u_\mu(s))\,d\WP(s)}{v_\mu(s)}\\[10pt]
&+\frac 4\mu \int_0^t\vert v_\mu(s)\vert_{H^1}^2\scalar{H^1}{ \sigma(u_\mu(s))\,d\WP(s)}{v_\mu(s)}=:\frac{c}{\mu^2}+\sum_{i=1}^6 J^\mu_i(t).
\end{split}
\end{align}

In what follows we will estimate each term  $J^\mu_i(t)$.
Since
\[\vert\langle u,v\rangle_{H^1}\vert\leq \vert u\vert_{H^1}\vert v\vert_{H^1}\leq \vert u\vert_{H^2}^{1/2}\vert v\vert_{H^1},\]
for $J^\mu_1(t)$ we have
\begin{align}
\begin{split}
\label{sz140}
J^\mu_1(t)&\leq \frac c\mu\int_0^t\vert u_\mu(s)\vert_{H^2}^{5/2} \vert v_\mu(s)\vert_{H}^2\vert v_\mu(s)\vert_{H^1}\,ds\\[10pt]	
&\leq \frac 1{2\mu^2}\int_0^t \vert u_\mu(s)\vert_{H^2}^2\vert v_\mu(s)\vert_{H^1}^2\,ds+\frac c{\mu^2}\int_0^t \vert u_\mu(s)\vert_{H^2}^4\,ds+c\,\mu^6\int_0^t \norma{H}{ v_\mu(s)}{H}^{16}\,ds.
\end{split}
\end{align}
For $J^\mu_2(t)$, due to \eqref{jan2}, we have
\begin{align}
\begin{split}
\label{sz141}
J^\mu_2(t)&\leq c\int_0^t\vert v_\mu(s)\vert_{H^1}^2 \vert v_\mu(s)\vert_{H}^2\vert u_\mu(s)\vert_{H^1}\vert v_\mu(s)\vert_{H^1}\,ds\leq c\int_0^t\vert v_\mu(s)\vert_{H^1}^3 \vert v_\mu(s)\vert_{H}^2\vert u_\mu(s)\vert_{H^2}^{1/2}\,ds\\[10pt]
&\leq \frac {1}{4\mu}\int_0^t \vert v_\mu(s)\vert_{H^1}^4\,ds+\frac {c}{\mu^2}\int_0^t \vert u_\mu(s)\vert_{H^2}^4\,ds+c\,\mu^8\int_0^t \norma{H}{ v_\mu(s)}{H}^{16}\,ds.
\end{split}
\end{align}
As for $J^\mu_3(t)$, due to \eqref{sz5-bis} we have
\begin{align}
\begin{split}
\label{sz142}
J^\mu_3(t)&\leq \frac{c}{\mu^3}\int_0^t\vert u_\mu(s)\vert_{H^2}^2\,ds.
\end{split}
\end{align}
For $J^\mu_4(t)$, we have
\begin{align}
\begin{split}
\label{sz143}
J^\mu_4(t)&\leq \frac{1}{4\mu}\int_0^t\vert v_\mu(s)\vert_{H^1}^4\,ds+\frac{c_T }{\mu^3}.
\end{split}
\end{align}
Therefore, if we replace \eqref{sz140}, \eqref{sz141}, \eqref{sz142} and \eqref{sz143} in \eqref{sz145}, we get
\begin{align*}
\vert v_\mu(t)\vert_{H^1}^4	+&\frac 1{\mu^2}\vert u_\mu(t)\vert^4_{H^2}+\frac 1{2\mu} \int_0^t\vert v_\mu(s)\vert_{H^1}^4\,ds+\frac 1{2\mu^2}\int_0^t\vert u_\mu(s)\vert^2_{H^2}\vert v_\mu(s)\vert_{H^1}^2\,ds\\[10pt]
&\hsl \leq \frac{c}{\mu^2}+ \frac c{\mu^2}\int_0^t \vert u_\mu(s)\vert_{H^2}^4\,ds+\frac{c}{\mu^3}\int_0^t\vert u_\mu(s)\vert_{H^2}^2\,ds\\[10pt]
&\hslp +c\,\mu^6\int_0^t \norma{H}{ v_\mu(s)}{H}^{16}\,ds+\frac{c_T}{\mu^3}+J^\mu_5(t)+J^\mu_6(t).
\end{align*}
In particular, thanks to  the Gronwall Lemma, for every $t \in\,[0,T]$ and $\mu \in\,(0,1)$ we get
\begin{align}
\begin{split}
\label{sz150}
\sup_{s\in [0, t]}\vert v_\mu(s)\vert_{H^1}^4&+	\frac 1{\mu^2}\sup_{s\in [0, t]}\vert u_\mu(s)\vert^4_{H^2}+\frac 1{2\mu} \int_0^t\vert v_\mu(s)\vert_{H^1}^4\,ds+\frac 1{2\mu^2}\int_0^t\vert u_\mu(s)\vert^2_{H^2}\vert v_\mu(s)\vert_{H^1}^2\,ds\\[10pt]
&\hsl\leq \frac{c_T}{\mu^2}+\frac{c_T}{\mu^3}\int_0^t\vert u_\mu(s)\vert_{H^2}^2\,ds+\frac{c_T}{\mu^2}\int_0^t\vert u_\mu(s)\vert_{H^2}^4\,ds+c_T\,\mu^6\int_0^t \norma{H}{ v_\mu(s)}{H}^{16}\,ds\\[10pt]
&+\frac {c_T}{\mu^3}+c_T\left(\sup_{s\in [0, t]}\left|J^\mu_5(s)\right|+\sup_{s\in [0, t]}\left|J^\mu_6(s)\right|\right).
	\end{split}
	\end{align}

Thanks to inequality  \eqref{sz5-bis}\dela{{sz231}} in Lemma \ref{lem-basic estimates on sigma}  and inequality \eqref{inequality-gradient flow}, we have
\begin{align}
\begin{split}
\label{sz152}
&\mathbb{E}	\sup_{s\in [0, t]}\left|J^\mu_5(s)\right|\leq \frac c{\mu^2}\mathbb{E}\left(\int_0^t|\Phi(u_\mu(s))|^2 \normb{H^1} {v_\mu(r)}^2 \,ds\right)^{\frac 12}\\[10pt]
&\hsp\leq \frac 1{2\mu^2}\,\mathbb{E}\sup_{s\in [0, t]}\vert u_\mu(s)\vert_{H^2}^4 + \frac c{\mu^2} \mathbb{E} \int_0^t\vert v_\mu(s)\vert_{H^1}^2\,ds.
\end{split}
\end{align}
Similarly, we get
\begin{align}\begin{split}
\label{sz151}	
\mathbb{E}	\sup_{s\in [0, t]}\left|J^\mu_6(s)\right|&\leq \frac c{\mu}\,\mathbb{E}\left(\int_0^t\vert v_\mu(s))\vert_{H^1}^4  \normb{H^1} {v_\mu(r)}^2 \,ds\right)^{\frac 12}
\\[10pt]
&\leq \frac 1{2}\,\mathbb{E}\sup_{s\in [0, t]}\vert v_\mu(s)\vert_{H^1}^4
+\frac c{\mu^2}\mathbb{E} \,\int_0^t\vert v_\mu(s)\vert_{H^1}^2\,ds.
\end{split}
\end{align}

Therefore, if we take the expectation of both sides in \eqref{sz150}, in view of \eqref{sz152} and \eqref{sz151}, we obtain
\begin{align}
\begin{split}
\label{sz153}
\mathbb{E}\,\sup_{s\in [0, t]}\vert v_\mu(s)&\vert_{H^1}^4+	\frac 1{\mu^2}\mathbb{E}\,\sup_{s\in [0, t]}\vert u_\mu(s)\vert^4_{H^2}\\[10pt]
&+\frac 1{\mu} \mathbb{E} \,\int_0^t\,\vert v_\mu(s)\vert_{H^1}^4\,ds+\frac 1{\mu^2}\mathbb{E} \,\int_0^t\vert u_\mu(s)\vert^2_{H^2}\vert v_\mu(s)\vert_{H^1}^2\,ds\\[10pt]
&\hs\leq \frac{c_T}{\mu^2}+\frac{c_T}{\mu^3}\mathbb{E} \,\int_0^t\,\vert u_\mu(s)\vert_{H^2}^2\,ds+\frac{c_T}{\mu^2}\,\mathbb{E} \,\int_0^t\,\vert u_\mu(s)\vert_{H^2}^4\,ds+c_T\,\mu^6 \int_0^t \mathbb{E}\,\norma{H}{ v_\mu(s)}{H}^{16}\,ds\\[10pt]
&+\frac c{\mu^2}\mathbb{E} \,\int_0^t\vert v_\mu(s)\vert_{H^1}^2\,ds+\frac {c_T}{\mu^3}.
	\end{split}
	\end{align}
As a consequence of the Gronwall lemma, thanks to \eqref{sz154}, after we multiply both sides by $\mu^3$ we get
\begin{align*}
\begin{split}
\mu^3\,\mathbb{E}&\sup_{t \in\,[0,T]}\vert \partial_tu_\mu(t)\vert_{H^1}^4+\mu\,\mathbb{E}\sup_{t \in\,[0,T]}\vert u_\mu(t)\vert_{H^2}^4\\[10pt]
&\hsp+\mu^2\int_0^t \mathbb{E}\vert \partial_t u_\mu(s)\vert_{H^1}^4\,ds+\mu\,\mathbb{E} \,\int_0^t\vert u_\mu(s)\vert^2_{H^2}\vert v_\mu(s)\vert_{H^1}^2\,ds\\[10pt]
&\leq c_T+c_T\int_0^T\mathbb{E}\,\vert u_\mu(t)\vert_{H^2}^2\,dt+c_T\,\mu^{9}\int_0^T \mathbb{E}\,\vert \partial_t u_\mu(s)\vert_{H}^{16}\,ds.
\end{split}	
\end{align*}
Now, according to \eqref{sz159} and Hypothesis \ref{hyp-H4}, we have
\begin{align*}  \begin{split}
\mu^{9}\,\mathbb{E}&\int_0^T\vert \partial_t u_\mu(s)\vert_{H}^{16}\,ds\leq c_T+c_{T}\,\mu\,
\mathbb{E}\int_0^t \vert u_\mu(s)\vert_{H^1}^{14}\Vert \sigma_0(u_\mu(s))\Vert_{\mathscr{T}_2(K,H)}^2\,ds\\[10pt]
&\hslp\leq c_T+c_{T}\,\mu\,
\mathbb{E}\int_0^t \vert u_\mu(s)\vert_{H^1}^{10}\,ds,
\end{split}
\end{align*}
and if we use again \eqref{sz159} and Hypothesis \ref{hyp-H4} we have
\begin{align*}  \begin{split}
\mu^{9}\,\mathbb{E}&\int_0^T\vert \partial_t u_\mu(s)\vert_{H}^{16}\,ds\leq c_T+c_{T}\,\mu\,
\left(\frac{c_T}{\mu}+\frac{c}{\mu}\mathbb{E}\int_0^t
\vert u_\mu(s)\vert_{H^1}^{8}\Vert \sigma_0(u_\mu(s))\Vert_{\mathscr{T}_2(K,H)}^2\,ds\right)\\[10pt]
&\hsp\leq c_T+c_{T}\,
\mathbb{E}\int_0^t \vert u_\mu(s)\vert_{H^1}^{4}\,ds.
\end{split}
\end{align*}
This allows to conclude that
\eqref{sz29} holds.
\end{proof}

\begin{Lemma}
\label{lemma1+delta}	
Assume Hypotheses \ref{hyp-H3}, \ref{hyp-H4} and \ref{hyp-H5}, and fix $(u_0,v_0) \in\,\mathscr{H}_2\cap \mathscr{M}$.   Then, for every $T>0$  there exists $c_T$ such that for every $\mu \in\,(0,1)$
\begin{align}
\begin{split}
\label{sz210}
\mu\,\mathbb{E}&\sup_{s \in\,[0,t]}\vert v_\mu(t)\vert_{H^2}^2+\mathbb{E} \,\int_0^t \vert v_\mu(s)\vert_{H^2}^2\,ds	\leq \frac {c_T}{\mu}\mathbb{E} \,\int_0^t\vert u_\mu(s)\vert^2_{H^2}\,ds+\frac{c_T}{\mu}.
\end{split}
	\end{align}

\end{Lemma}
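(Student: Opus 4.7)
The strategy mirrors the proof of Lemma~\ref{lemma20}, raised by one unit of regularity.  I would apply the It\^o formula to the Lyapunov functional
\[
\Psi(u,v) := \tfrac{1}{2}\bigl(\vert u\vert_{H^3}^2 + \mu\, \vert v\vert_{H^2}^2\bigr)
\]
along the process $z_\mu = (u_\mu,v_\mu)$ solving the system~\eqref{system}.  The decisive algebraic identity is the integration-by-parts $\scalar{H^2}{v_\mu}{\Delta u_\mu} = -\scalar{H^3}{v_\mu}{u_\mu}$, which combined with $du_\mu = v_\mu\,dt$ and $\tfrac{d}{dt}\vert u_\mu\vert_{H^3}^2 = 2\,\scalar{H^3}{u_\mu}{v_\mu}$ exactly cancels the leading wave-like linear coupling and yields
\begin{align*}
d\Psi(z_\mu) + \gamma\,\vert v_\mu\vert_{H^2}^2\,dt
&= \bigl(\vert u_\mu\vert_{H^1}^2 - \mu\,\norma{H}{v_\mu}{H}^2\bigr)\,\scalar{H^2}{v_\mu}{u_\mu}\,dt \\
&\quad + \tfrac{1}{2\mu}\,\Vert \sigma(u_\mu)\Vert^2_{\mathscr{T}_2(K,H^2)}\,dt + \scalar{H^2}{v_\mu}{\sigma(u_\mu)\,d\WP}.
\end{align*}

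The trace-class term is estimated directly by Lemma~\ref{lem-basic estimates on sigma}(4), which uses Hypothesis~\ref{hyp-H5}, giving $\tfrac{1}{2\mu}\Vert\sigma(u_\mu)\Vert^2_{\mathscr{T}_2(K,H^2)} \leq \tfrac{c}{\mu}\bigl(1+\vert u_\mu\vert_{H^2}^2\bigr)$, which after integration already produces the two terms on the right-hand side of~\eqref{sz210}.  For the cubic term $\vert u_\mu\vert_{H^1}^2\,\scalar{H^2}{v_\mu}{u_\mu}$ I would combine Cauchy--Schwarz with the interpolation~\eqref{jan2} (so that $\vert u_\mu\vert_{H^1}^4 \leq \vert u_\mu\vert_{H^2}^2$ on $M$) and apply Young's inequality to absorb $\tfrac{\gamma}{4}\vert v_\mu\vert_{H^2}^2$ into the left-hand side, leaving a remainder of the form $c\,\vert u_\mu\vert_{H^2}^4$.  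The more delicate term $\mu\,\norma{H}{v_\mu}{H}^2\,\scalar{H^2}{v_\mu}{u_\mu}$ I would treat by the product trick of~\eqref{sz18}: rewrite $2\mu\norma{H}{v_\mu}{H}^2\,\scalar{H^2}{v_\mu}{u_\mu}\,dt = \mu\,\norma{H}{v_\mu}{H}^2\,d\vert u_\mu\vert_{H^2}^2$ as $d\bigl(\mu\,\norma{H}{v_\mu}{H}^2\,\vert u_\mu\vert_{H^2}^2\bigr) - \vert u_\mu\vert_{H^2}^2\,d\bigl(\mu\,\norma{H}{v_\mu}{H}^2\bigr)$, and substitute the It\^o expansion of $d(\mu\,\norma{H}{v_\mu}{H}^2)$ coming from~\eqref{sz165} with $p=1$, simplifying several contributions by means of the constraint $\scalar{H}{u_\mu}{v_\mu}=0$ of Proposition~\ref{proposition 3.1}.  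This produces additional terms of the form $\vert u_\mu\vert_{H^2}^2\,\vert v_\mu\vert_{H^1}^2$, whose time integral is controlled with the correct $\mu$-dependence by~\eqref{sz29}, plus a martingale that I would handle separately.

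Taking supremum in time and expectation and applying the Burkholder--Davis--Gundy inequality to the $H^2$-martingale exactly as in~\eqref{eqn-new-Davis} allows me to absorb $\tfrac{\mu}{2}\,\mathbb{E}\sup_{s \in [0,t]}\vert v_\mu(s)\vert_{H^2}^2$ into the left-hand side and leads to
\[
\mu\,\mathbb{E}\sup_{s\in[0,t]}\vert v_\mu(s)\vert_{H^2}^2 + \mathbb{E}\int_0^t\vert v_\mu(s)\vert_{H^2}^2\,ds \leq \frac{c_T}{\mu} + \frac{c_T}{\mu}\,\mathbb{E}\int_0^t\vert u_\mu(s)\vert_{H^2}^2\,ds + c_T\,\mathbb{E}\int_0^t\vert u_\mu(s)\vert_{H^2}^4\,ds.
\]
The principal obstacle is the remaining quartic term, which cannot be closed by interpolation with the constraint alone; this is precisely where~\eqref{sz29} enters.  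Dividing that bound by $\mu$ one obtains $\mathbb{E}\int_0^t\vert u_\mu(s)\vert_{H^2}^4\,ds \leq \tfrac{c_T}{\mu}\bigl(1 + \mathbb{E}\int_0^t\vert u_\mu(s)\vert_{H^2}^2\,ds\bigr)$, and substituting this back yields precisely~\eqref{sz210}.  Throughout the argument, the earlier a priori bounds~\eqref{sz159},~\eqref{sz70},~\eqref{sz154} and Lemma~\ref{lemma5.6} on the lower-order quantities guarantee that every auxiliary term generated by the product trick is uniformly integrable in $\mu\in(0,1)$.
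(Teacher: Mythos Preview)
Your plan is correct and matches the paper's proof closely: both start from the It\^o formula for $\tfrac12(|u_\mu|_{H^3}^2+\mu|v_\mu|_{H^2}^2)$, use Lemma~\ref{lem-basic estimates on sigma}(4) for the trace term, absorb pieces of $|v_\mu|_{H^2}^2$ by Young, and close the residual $|u_\mu|_{H^2}^4$-integral via \eqref{sz29}. The only substantive difference is in how you handle $\mu\,|v_\mu|_H^2\,\scalar{H^2}{v_\mu}{u_\mu}$: you propose the full product trick \`a la \eqref{sz18}, substituting the It\^o expansion of $d(\mu|v_\mu|_H^2)$ and generating further terms (including an extra martingale); the paper instead dispatches this term in one line by the direct bound
\[
\mu\,|v_\mu|_H^2\,d|u_\mu|_{H^2}^2\le 2\mu\,|v_\mu|_H^2\,|u_\mu|_{H^2}\,|v_\mu|_{H^2}\le \gamma\,|v_\mu|_{H^2}^2+c\,|u_\mu|_{H^2}^4+c\,\mu^2|v_\mu|_H^4,
\]
the last term being controlled by \eqref{sz173}. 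Your route works but is more laborious; the paper's shortcut is worth noting. Similarly, for $|u_\mu|_{H^1}^2\scalar{H^2}{v_\mu}{u_\mu}$ the paper first writes it as a total differential plus $-\tfrac12|u_\mu|_{H^2}^2\,d|u_\mu|_{H^1}^2$ (see \eqref{sz201}) and then uses interpolation and Young, whereas you go straight to Young; both yield the same $c|u_\mu|_{H^2}^4$ residual. The BDG step in the paper pulls out $\sup|u_\mu|_{H^2}^2$ rather than $\sup|v_\mu|_{H^2}^2$ and invokes \eqref{sz16}--\eqref{sz70} to bound it by $c_T/\mu$; your variant absorbing $\tfrac\mu2\mathbb{E}\sup|v_\mu|_{H^2}^2$ is equally valid.
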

\begin{proof}
The It\^o Lemma \ref{lem-Ito-final} gives
\begin{align}
\begin{split}
\frac 12 d\left(\vert u_\mu(t)\vert_{H^{3}}^2+\right.&\left.\mu\vert v_\mu(t)\vert_{H^2}^2\right)\\[10pt]
\label{sz17-bis}&\hs=\scalar{H^3}{ u_\mu(t)}{du_\mu(t)}+\scalar{H^2}{ v_\mu(t)}{\mu\,dv_\mu(t)}+\frac 1{2\mu} \Vert \sigma(u_\mu(t))\Vert_{\mathscr{T}_2(K,H^2)}^2\,dt\\[10pt]
&\hs=\frac 12 \vert u_\mu(t)\vert_{H^1}^2\,d\vert u_\mu(t)\vert_{H^2}^2-\frac \mu 2\norma{H}{ v_\mu(t)}{H}^2\, d\vert u_\mu(t)\vert_{H^2}^2-\gamma\,\vert v_\mu(t)\vert_{H^2}^2\,dt\\[10pt]
&+\frac 1{2\mu} \Vert \sigma(u_\mu(t))\Vert_{\mathscr{T}_2(K,H^2)}^2\,dt+\scalar{H^2}{ \sigma(u_\mu(t))d\WP(t)}{v_\mu(t)}.	
\end{split}
	\end{align}
	
	Now, we have
	\begin{align}
	\begin{split}
		\label{sz201}
	\vert u_\mu(t)\vert_{H^1}^2\,d\vert u_\mu(t)\vert_{H^2}^2= d\left(\vert u_\mu(t)\vert_{H^1}^2\vert u_\mu(t)\vert_{H^2}^2\right)-d\vert u_\mu(t)\vert_{H^1}^2\vert u_\mu(t)\vert_{H^2}^2, 	
	\end{split}	
	\end{align}
and \begin{align}
\begin{split}
\label{sz200-bis}
\mu\,\vert v_\mu(t)\vert_{H}^2\,d\vert u_\mu(t)\vert_{H^2}^2&\leq 2\mu\,\vert v_\mu(t)\vert_{H}^2\,\vert u_\mu(t)\vert_{H^2}\vert v_\mu(t)\vert_{H^2}\\[10pt]
&\leq \gamma \,\vert v_\mu(t)\vert_{H^2}^2+c\,\vert u_\mu(t)\vert_{H^2}^4+c\,\mu^2\,\vert v_\mu(t)\vert_{H}^4.
\end{split}
\end{align}
Thus, if we plug \eqref{sz201}, and \eqref{sz200-bis} into \eqref{sz17-bis},
we get
\begin{align*}
\begin{split}	
\frac 12 d&\left(\vert u_\mu(t)
\vert_{H^{3}}^2-	\vert u_\mu(t)\vert_{H^2}^2\,\vert u_\mu(t)\vert_{H^1}^2\right)+\frac{\mu}2\,d\vert v_\mu(t)\vert_{H^2}^2+\frac \gamma 2\,\vert v_\mu(t)\vert_{H^2}^2\,dt\\[10pt]
&\hslp\leq c\,\vert u_\mu(t)\vert^4_{H^2}\,dt+c\,\mu^2\,\vert v_\mu(t)\vert_{H}^4-\frac 12\,\vert u_\mu(t)\vert_{H^2}^2\,d\vert u_\mu(t)\vert_{H^1}^2\,dt\\[10pt]
&\hsp +\frac 1{2\mu} \Vert \sigma(u_\mu(t))\Vert_{\mathscr{T}_2(K,H^2)}^2\,dt+\scalar{H^2}{ \sigma(u_\mu(t))d\WP(t)}{v_\mu(t)}.
\end{split}
\end{align*}
Due to \eqref{jan2}, we have
\begin{align*}
\vert u_\mu(t)\vert_{H^2}^2\,d\vert u_\mu(t)\vert_{H^1}^2&\leq 	2\vert u_\mu(t)\vert_{H^2}^2\vert u_\mu(t)\vert_{H^1}\vert v_\mu(t)\vert_{H^1}\leq 	2\vert u_\mu(t)\vert_{H^2}^{5/2}\vert v_\mu(t)\vert_{H^2}^{1/2}\vert v_\mu(t)\vert_{H}^{1/2}\\[10pt]
&\leq c\, \vert u_\mu(t)\vert_{H^2}^2 \vert v_\mu(t)\vert_{H}^2+\frac \gamma 4\, \vert v_\mu(t)\vert_{H^2}^2+c\,\vert u_\mu(t)\vert_{H^2}^4,
\end{align*}
and thanks to inequality \eqref{sz190} from Lemma \ref{lem-basic estimates on sigma} this implies
\begin{align*}
\begin{split}	
\frac 12 d\left(\vert u_\mu(t)\right.&\left.
\vert_{H^{3}}^2-	\vert u_\mu(t)\vert_{H^2}^2\,\vert u_\mu(t)\vert_{H^1}^2\right)+\frac \mu 2\,d\vert v_\mu(t)\vert_{H^2}^2+\frac\gamma 4 \vert v_\mu(t)\vert_{H^2}^2\,dt\\[10pt]
&\leq \frac c{\mu}\,dt+\frac c{\mu}\vert u_\mu(t)\vert^2_{H^2}\,dt+c\,\vert u_\mu(t)\vert_{H^2}^4\,dt+c\, \vert u_\mu(t)\vert_{H^2}^2 \vert v_\mu(t)\vert_{H}^2\,dt+c\,\mu^2\,\vert v_\mu(t)\vert_{H}^4\,dt\\[10pt]
&\hsp+\scalar{H^2}{ \sigma(u_\mu(t))d\WP(t)}{v_\mu(t)}.
\end{split}
\end{align*}
Since
\[\vert u_\mu(t)
\vert_{H^{3}}^2-	\vert u_\mu(t)\vert_{H^2}^2\,\vert u_\mu(t)\vert_{H^1}^2\geq 0, \]
after we integrate with respect to $t$, and  take first the supremum in $t$ and then the expectation,  we get
\begin{align}
\label{sz210-bis}
\mu\,\mathbb{E}&\sup_{s \in\,[0,t]}\vert v_\mu(t)\vert_{H^2}^2+\frac \gamma 4\mathbb{E} \,\int_0^t \vert v_\mu(s)\vert_{H^2}^2\,ds	\leq \frac {c_T}{\mu}+\frac c{\mu}\mathbb{E} \,\int_0^t\vert u_\mu(s)\vert^2_{H^2}\,ds\\[10pt]
&\hslp +c\int_0^t \mathbb{E}\vert u_\mu(s)\vert_{H^2}^4\,ds+c\int_0^t \vert u_\mu(s)\vert_{H^2}^2 \vert v_\mu(s)\vert_{H}^2\,ds+c\,\mu^2\mathbb{E} \,\int_0^t\vert v_\mu(s)\vert_{H}^4\,ds\\[10pt]
&\hsp+\mathbb{E}\sup_{s \in\,[0,t]}\left|\int_0^s\scalar{H^2}{ \sigma(u_\mu(r))d\WP(r)}{v_\mu(r)}\right|.
	\end{align}
According to inequality \eqref{sz190}\dela{{sz180-bis}} from Lemma \ref{lem-basic estimates on sigma}, and inequalities \eqref{sz16} and \eqref{sz70}, we have
\begin{align*}
\mathbb{E}\sup_{s \in\,[0,t]}&\left|\int_0^s\scalar{H^2}{ \sigma(u_\mu(r))d\WP(r)}{v_\mu(r)}\right|\leq c\,\mathbb{E}\left(\int_0^t	\left(1+\vert u_\mu(s)\vert_{H^2}^2\right)\vert v_\mu(s)\vert_{H^2}^2\,ds\right)^{1/2}\\[10pt]
&\leq \frac \gamma 8\mathbb{E} \,\int_0^t\vert v_\mu(s)\vert_{H^2}^2\,ds+c\,\mathbb{E}\sup_{s \in\,[0,t]}\vert u_\mu(s)\vert_{H^2}^2+c_T\leq \frac \gamma 8\mathbb{E} \,\int_0^t\vert v_\mu(s)\vert_{H^2}^2\,ds+\frac {c_T}\mu.
\end{align*}
and if we replace this into \eqref{sz210-bis} we obtain
\begin{align*}
\begin{split}
\mu\,\mathbb{E}&\sup_{s \in\,[0,t]}\vert v_\mu(t)\vert_{H^2}^2+\frac \gamma 8\mathbb{E} \,\int_0^t \vert v_\mu(s)\vert_{H^2}^2\,ds	\leq \frac c{\mu}\mathbb{E} \,\int_0^t\vert u_\mu(s)\vert^2_{H^2}\,ds+c\int_0^t \mathbb{E}\vert u_\mu(s)\vert_{H^2}^4\,ds\\[10pt]
&\hslp+c\int_0^t \vert u_\mu(s)\vert_{H^2}^2 \vert v_\mu(s)\vert_{H}^2\,ds+c\,\mu^2\mathbb{E} \,\int_0^t\vert v_\mu(s)\vert_{H}^4\,ds+\frac {c_T}\mu.
\end{split}
	\end{align*}
Hence, \eqref{sz29} allows to conclude that
\eqref{sz210} holds.
\end{proof}

\begin{Lemma}
\label{lemma1-bis}
Under Hypotheses \ref{hyp-H3}, \ref{hyp-H4} and \ref{hyp-H5}, for every $(u_0,v_0) \in\,\mathscr{H}_2\cap \mathscr{M}$ and $T>0$ there exists a constant $c_T>0$ such that for every $\mu \in\,(0,1)$ and $t \in\,[0,T]$
\begin{align}
\begin{split}
\label{sz8-bis}
\mathbb{E}\vert u_\mu(t)\vert_{H^2}^2\leq c_T+c\,\mu\,\mathbb{E} \,\int_0^t\vert \partial_t u_\mu(s)\vert_{H^2}^2\,ds.
\end{split}
\end{align}
\end{Lemma}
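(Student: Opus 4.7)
The plan is to imitate the argument of Lemma \ref{lemma1}, lifting it verbatim from the $H^1$ to the $H^2$ scale. First I would apply the It\^{o} formula to the scalar product $\mu\langle u_\mu(t),v_\mu(t)\rangle_{H^2}$, using that $du_\mu=v_\mu\,dt$ (so no It\^{o} correction enters through this factor) together with the evolution equation for $\mu\,dv_\mu$ from \eqref{system}. Using the identities $\langle u_\mu,\Delta u_\mu\rangle_{H^2}=-\vert u_\mu\vert_{H^3}^2$ and $\gamma\langle u_\mu,v_\mu\rangle_{H^2}\,dt=\tfrac{\gamma}{2}\,d\vert u_\mu\vert_{H^2}^2$, this produces
\begin{align*}
\mu\,d\langle u_\mu,v_\mu\rangle_{H^2}+\tfrac{\gamma}{2}\,d\vert u_\mu\vert_{H^2}^2
&=\bigl[\mu\vert v_\mu\vert_{H^2}^2-\vert u_\mu\vert_{H^3}^2+\vert u_\mu\vert_{H^1}^2\vert u_\mu\vert_{H^2}^2-\mu\vert v_\mu\vert_H^2\vert u_\mu\vert_{H^2}^2\bigr]\,dt \\
&\quad+\langle u_\mu,\sigma(u_\mu)\,d\WP\rangle_{H^2}.
\end{align*}

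The crucial observation, which is the exact $H^2$-analogue of the positivity step $\vert u\vert_{H^2}^2\geq\vert u\vert_{H^1}^4$ used in Lemma \ref{lemma1}, is that on the manifold $M$ the quantity $\vert u_\mu\vert_{H^3}^2-\vert u_\mu\vert_{H^1}^2\vert u_\mu\vert_{H^2}^2$ is nonnegative. Indeed, applying \eqref{jan2} with $(\varrho,\vartheta)=(2,1)$ and $(3,2)$ respectively gives $\vert u\vert_{H^1}^2\leq\vert u\vert_{H^2}$ and $\vert u\vert_{H^2}^{3/2}\leq\vert u\vert_{H^3}$, so that $\vert u_\mu\vert_{H^1}^2\vert u_\mu\vert_{H^2}^2\leq\vert u_\mu\vert_{H^2}^3\leq\vert u_\mu\vert_{H^3}^2$. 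Dropping this nonnegative contribution together with the nonpositive term $-\mu\vert v_\mu\vert_H^2\vert u_\mu\vert_{H^2}^2$ upon integration, and converting $\mu\langle u_\mu,v_\mu\rangle_{H^2}=\tfrac{\mu}{2}\tfrac{d}{dt}\vert u_\mu\vert_{H^2}^2$, I obtain
\[
\tfrac{\mu}{2}\tfrac{d}{dt}\vert u_\mu(t)\vert_{H^2}^2+\tfrac{\gamma}{2}\vert u_\mu(t)\vert_{H^2}^2\leq \mu\int_0^t\vert v_\mu(s)\vert_{H^2}^2\,ds+\int_0^t\langle u_\mu,\sigma(u_\mu)\,d\WP\rangle_{H^2}+c(u_0,v_0),
\]
with $c(u_0,v_0)$ bounded uniformly in $\mu\in(0,1)$ thanks to $(u_0,v_0)\in\mathscr{H}_2\cap\mathscr{M}$.

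Finally, after dividing by $\mu/2$ and taking expectation (the stochastic integral has mean zero), the function $y(t):=\mathbb{E}\vert u_\mu(t)\vert_{H^2}^2$ satisfies a linear differential inequality of the form $\dot y(t)+\tfrac{\gamma}{\mu}y(t)\leq\tfrac{c_T}{\mu}+2\int_0^t\mathbb{E}\vert v_\mu(s)\vert_{H^2}^2\,ds$, to which I would then apply exactly the comparison argument carried out at the end of Lemma \ref{lemma1}: multiply by the integrating factor $e^{\gamma t/\mu}$ and swap the order of integration in the resulting double integral, which produces the net factor $\mu/\gamma$ in front of $\int_0^t\mathbb{E}\vert v_\mu(s)\vert_{H^2}^2\,ds$. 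This yields \eqref{sz8-bis}. The only genuine content beyond routine bookkeeping is the interpolation step on $M$ ensuring the positivity of $\vert u_\mu\vert_{H^3}^2-\vert u_\mu\vert_{H^1}^2\vert u_\mu\vert_{H^2}^2$; there is no It\^{o} correction to worry about here because the formula is applied to a bilinear pairing featuring the smooth factor $u_\mu$ rather than directly to $\vert v_\mu\vert_{H^2}^2$, which is precisely what lets the proof mirror that of Lemma \ref{lemma1} without additional estimates on $\sigma$.
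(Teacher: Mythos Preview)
Your approach is essentially the paper's own: apply the product rule to $\mu\langle u_\mu,v_\mu\rangle_{H^2}$, drop the nonnegative term $\vert u_\mu\vert_{H^3}^2-\vert u_\mu\vert_{H^1}^2\vert u_\mu\vert_{H^2}^2$ via \eqref{jan2} (your two-step interpolation is correct and in fact more explicit than the paper's one-line appeal), take expectation, and apply the comparison argument.

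There is, however, one point you treat as routine that the paper singles out. You write ``the stochastic integral has mean zero'' without justification, but at the $H^2$ level this is not automatic: by \eqref{sz190} one has $\Vert\sigma(u_\mu)\Vert_{\mathscr{T}_2(K,H^2)}\leq c(1+\vert u_\mu\vert_{H^2})$, so the quadratic variation of $\int_0^\cdot\langle u_\mu,\sigma(u_\mu)\,d\WP\rangle_{H^2}$ is controlled by $\int_0^t(1+\vert u_\mu\vert_{H^2}^2)\vert u_\mu\vert_{H^2}^2\,ds$, and to conclude the local martingale is a true martingale one needs $\mathbb{E}\int_0^t\vert u_\mu(s)\vert_{H^2}^4\,ds<\infty$. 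The paper invokes precisely this, combining \eqref{sz29} with \eqref{sz16} and \eqref{sz70}, before taking the expectation. This is the reason Hypotheses \ref{hyp-H4} and \ref{hyp-H5} appear in the statement even though your displayed computations never call on them directly; they are needed for the fourth-moment bound \eqref{sz29} that legitimises the step you flagged as parenthetical. Once you add that sentence, your proof matches the paper's.
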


\begin{proof}
We apply the It\^o Lemma \ref{lem-Ito-final} to the function
\[K_\mu:H^2\times H^2 \ni (u,v) \mapsto \mu\,\scalar{H^2}{ u}{v} \in \mathbb{R},\]
and by proceeding as in the the proof of Lemma \ref{lemma1}, we get 	
\begin{align}
\begin{split}
\label{sz213}
\frac \mu 2\,\frac{d}{dt} \vert u_\mu(t)&\vert_{H^{2}}^2+\frac \gamma 2\vert u_\mu(t)\vert_{H^{2}}^2+\int_0^t \left(\vert u_\mu(s)\vert_{H^{3}}^2-\vert u_\mu(t)\vert_{H^1}^2\vert u_\mu(t)\vert_{H^2}^2\right)\,ds\\[10pt]
&\hsl\leq \mu\,\int_0^t \vert v_\mu(s)\vert_{H^{2}}^2\,ds+\int_0^t \scalar{H^2}{ \sigma(u_\mu(s))d\WP(s)}{u_\mu(s)}+\frac \mu 2\scalar{H^2}{ u_0}{v_0}+c.
\end{split}
\end{align}
According to \eqref{jan2} we have
\[\vert u_\mu(s)\vert_{H^{3}}^2-\vert u_\mu(t)\vert_{H^1}^2\vert u_\mu(t)\vert_{H^2}^2\geq 0.\]
Moreover, by combining together \eqref{sz29}, with \eqref{sz16} and \eqref{sz70},
we have that for every $\mu \in\,(0,1)$
\[\mathbb{E} \,\int_0^t\,\vert u_\mu(s)\vert_{H^2}^4\,ds<\infty.\]
Due to inequality  \eqref{sz190}\dela{sz180-bis} from Lemma \ref{lem-basic estimates on sigma}, this implies
\[\mathbb{E}\left|\int_0^t \scalar{H^2}{ \sigma(u_\mu(s))d\WP(s)}{u_\mu(s)}\right|^2\leq  c\,\mathbb{E} \,\int_0^t\,\vert u_\mu(s)\vert_{H^2}^4\,ds+c_T<\infty,\]
so that we can take the expectation of both sides in \eqref{sz213} and we get
\begin{align*}
\begin{split}
\frac{d}{dt} \mathbb{E}\vert u_\mu(t)\vert_{H^{2}}^2+\frac 1\mu\,\mathbb{E}\vert u_\mu(t)\vert_{H^{2}}^2\leq c\int_0^t \mathbb{E}\vert v_\mu(s)\vert_{H^{2}}^2\,ds+\frac{c_T}\mu.
\end{split}
\end{align*}
By a comparison argument this gives \eqref{sz8-bis}.

\end{proof}

\begin{Remark}
{\em By combining together \eqref{sz210} and \eqref{sz8-bis}, we have	 that for every $(u_0,v_0) \in\,\mathscr{H}_2\cap \mathscr{M}$ and $T>0$ there exists  a constant $c_T(u_0,v_0)>0$ such that for every $\mu \in\,(0,1)$
\[\mathbb{E}\vert u_\mu(t)\vert_{H^2}^2\leq c_T(u_0,v_0)+c\,\mathbb{E} \,\int_0^t\vert u_\mu(s)\vert_{H^2}^2\,ds,\;\;\; t \in\,[0,T].\]
Hence, the Gronwall Lemma allows to conclude
\begin{equation}
\label{sz70-bis}
\mathbb{E}\vert u_\mu(t)\vert_{H^2}^2\leq c_T(u_0,v_0),\;\;\; t \in\,[0,T].
\end{equation}
In particular, thanks again to \eqref{sz210},
\begin{equation}
\label{sz154-bis}
\int_0^T \mathbb{E}\vert\partial_t u_\mu(s)\vert_{H^2}^2\,ds\leq \frac 1\mu\,c_T(u_0,v_0).	\end{equation}
\hfill\(\Box\) }
\end{Remark}

\begin{Remark} {\em Let us present an alternative proof of inequalities \eqref{inequality-gradient flow}
\[\frac 12\,\vert u\vert_{H^2}^2\leq \Phi(u):=\vert u\vert_{H^2}^2-\frac 12\,\vert u\vert_{H^1}^4\leq \vert u\vert_{H^2}^2.\]

. The inequality on the right is obvious. The inequality on the left is a consequence of the following argument. Assume that $u \in M \cap H^2$. Then
\begin{align}\label{inequality-gradient flow-3}
\Phi(u)&= \normb{H}{\Delta u}^2 -\frac12\normb{H}{\nabla u}^4=
\frac12 \normb{H}{\Delta u}^2  + \frac12 \left( \normb{H}{\Delta u}^2 -\normb{H}{\nabla u}^4   \right)
\\
&=\frac 12\,\normb{H}{\Delta u+ \normb{H}{\nabla u}^2u}^2 + \frac12 \normb{H}{\Delta u}^2 \geq \frac12 \normb{H}{\Delta u}^2.
\end{align}
Notice that here the crucial identity is
\[
\normb{H}{\Delta u}^2 - \normb{H}{\nabla u}^4= \normb{H}{\Delta u+ \normb{H}{\nabla u}^2u}^2, \;\; u \in M \cap H^2.
\]

}

\end{Remark}

\medskip

\section{Tightness}
\label{sec8}

We first need to introduce some notations and preliminary results. If $E$ is a Banach space and  $T>0$, for every   $0<\sigma<1$ and $1\leq p\leq \infty$ we define
\[W^{\sigma, p}(0,T;E):=\left\{f \in\,L^p(0,T;E)\ :\ [f]_{\dot{W}^{\sigma, p}(0,T;E)}<\infty \right\},\]
where
\[[ f]_{\dot{W}^{\sigma, p}(0,T;E)}:=\left(\int_0^T\int_0^T\frac{\vert f(t)-f(s)\vert_E^p}{\vert t-s\vert^{1+\sigma p}}\,dt\,ds\right)^{\frac 1p}.\]
The space $W^{\sigma, p}(0,T;E)$, endowed with the norm
\[\vert\cdot\vert_{W^{\sigma, p}(0,T;E)}:=\vert\cdot\vert_{L^p(0,T;E)}+[\cdot]_{\dot{W}^{\sigma, p}(0,T;E)},\] is a Banach space. Moreover, for every $f \in\,L^p(0,T;E)$ and $h \in\,[0,T)$ we denote
\[\tau_hf(t)=f(t+h),\;\;\; t \in\,[-h,T-h].\]
In \cite[Lemma 5]{simon1986} it is proven that if $f \in\,W^{\sigma, r}(0,T;E)$, with $0<\sigma<1$ and $1\leq r\leq \infty$, and if $p$ is such that
\[p\leq \infty,\ \text{if}\ \sigma>\frac 1r,
\ \ \ \    p<\infty, \ \text{if}\ \sigma=\frac 1 r,\ \ \ \ p\leq r_\star:=\frac r{1-\sigma r},\ \text{if}\ \sigma<\frac 1r,\]
then $f \in\,L^p(0,T;E)$ and there exists a constant $c$ independent of $f$ such that for every $h\geq 0$
\begin{equation}
\label{sz50}	
\vert \tau_hf-f\vert_{L^p(0,T-h;E)}\leq c\begin{cases}
 h^{\sigma+\frac 1p-\frac 1r}[f]_{\dot{W}^{\sigma, r}(0,T;E)}, & \text{if}\ \ r\leq p\leq \infty,\\[10pt]

 h^{\sigma}T^{\frac 1p-\frac 1r}[f]_{\dot{W}^{\sigma, r}(0,T;E)}, & \text{if}\ \ 1\leq r\leq p.	
 \end{cases}
\end{equation}

According to \eqref{system}, we have
\begin{align*}
\gamma\, du_\mu(t)+\mu\,d v_\mu(t)=&\left(\Delta u_\mu(t)+\vert u_\mu(t)\vert_{H^1}^2u_\mu(t)	-\mu\norma{H}{ v_\mu(t)}{H}^2 u_\mu(t)\right)\,dt\\[10pt]
&+\sigma(u_\mu(t))\,d\WP(t),	
\end{align*}
with $v_\mu(t)=\partial_t u_\mu(t)$.  Then, if we integrate with respect to time, we get
\begin{align}
\begin{split}
\Phi_\mu(t):=\gamma u_\mu(t)+\mu\,v_\mu(t)=&\,\gamma u_0+\mu\,v_0	+\int_0^t\Delta u_\mu(s)\,ds+\int_0^t\vert u_\mu(s)\vert_{H^1}^2u_\mu(s)\,ds	\\[10pt]
\label{sz51}&\hs\hs-\mu\int_0^t\norma{H}{ v_\mu(s)}{H}^2 u_\mu(s)\,ds+\int_0^t
\sigma(u_\mu(s))\,d\WP(s)=:I_\mu+\sum_{j=1}^4 J_{\mu,k}(t).
\end{split}
\end{align}
`
\begin{Lemma}
\label{lemma4}
Under Hypotheses \ref{hyp-H3}, \ref{hyp-H4} and \ref{hyp-H5},  for every $(u_0,v_0) \in\,\mathscr{H}_2\cap \mathscr{M}$, and for every $T>0$ and $\theta <1/2$ we have
\begin{equation}
\label{sz52}
\sup_{\mu \in\,(0,1)}\,\mathbb{E}\left[\Phi_\mu\right]_{	\dot{W}^{\theta,2}(0,T;H)}<\infty.
\end{equation}
	\end{Lemma}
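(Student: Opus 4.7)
The plan is to estimate separately the contribution of each of the five terms in the decomposition \eqref{sz51} of $\Phi_\mu$ to the fractional Sobolev seminorm, using Jensen's inequality in the form
\[
\mathbb{E}[\Phi_\mu]_{\dot{W}^{\theta,2}(0,T;H)}\leq \Big(\mathbb{E}[\Phi_\mu]_{\dot{W}^{\theta,2}(0,T;H)}^2\Big)^{1/2},
\]
together with the quasi-triangle inequality $(\sum_{k=0}^4 a_k)^2\leq 5\sum_{k=0}^4 a_k^2$. The constant initial datum $I_\mu=\gamma u_0+\mu v_0$ contributes zero, so only $J_{\mu,1},\dots,J_{\mu,4}$ require attention.

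For the three deterministic Bochner integrals $J_{\mu,1},J_{\mu,2},J_{\mu,3}$, which all have the form $J(t)=\int_0^t f(s)\,ds$ with $f$ an $H$-valued progressively measurable process, the key observation is the elementary bound $|J(t)-J(s)|_H^2\leq |t-s|\int_s^t |f(r)|_H^2\,dr$. Substituting into the double integral and applying Fubini gives
\[
\mathbb{E}\int_0^T\!\!\int_0^T \frac{|J(t)-J(s)|_H^2}{|t-s|^{1+2\theta}}\,dt\,ds \leq \Big(\mathbb{E}\int_0^T|f(r)|_H^2\,dr\Big)\cdot \int_0^T\!\!\int_0^T \frac{dt\,ds}{|t-s|^{2\theta}},
\]
and the last integral is finite because $2\theta<1$. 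It thus suffices to check that $\mathbb{E}\int_0^T|f_k(r)|_H^2\,dr$ is bounded uniformly in $\mu\in(0,1)$ for each $k=1,2,3$. For $f_1=\Delta u_\mu$ this follows from $|\Delta u_\mu|_H=|u_\mu|_{H^2}$ and the uniform bound \eqref{sz70-bis}. For $f_2=|u_\mu|_{H^1}^2 u_\mu$, since $u_\mu(s)\in M$ we have $|f_2|_H=|u_\mu|_{H^1}^2$, and by \eqref{jan2} $|u_\mu|_{H^1}^4\leq |u_\mu|_{H^2}^2$, so again \eqref{sz70-bis} applies. For $f_3=-\mu\,|v_\mu|_H^2 u_\mu$, we have $|f_3|_H^2=\mu^2|v_\mu|_H^4$ and the required bound $\mu^2\,\mathbb{E}\int_0^T|v_\mu(s)|_H^4\,ds\leq c_T$ is exactly \eqref{sz173}.

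For the stochastic integral $J_{\mu,4}(t)=\int_0^t\sigma(u_\mu(s))\,dW^Q(s)$, the It\^o isometry and the uniform bound \eqref{sz5} of Lemma \ref{lem-basic estimates on sigma} (which is available since $u_\mu(s)\in M\cap H^1$ almost surely by Theorem \ref{thm-existence-example}) yield
\[
\mathbb{E}|J_{\mu,4}(t)-J_{\mu,4}(s)|_H^2 \;=\; \mathbb{E}\int_s^t\Vert\sigma(u_\mu(r))\Vert_{\mathscr{T}_2(K,H)}^2\,dr\;\leq\; c\,|t-s|,
\]
uniformly in $\mu$. Integrating as above,
\[
\mathbb{E}\int_0^T\!\!\int_0^T \frac{|J_{\mu,4}(t)-J_{\mu,4}(s)|_H^2}{|t-s|^{1+2\theta}}\,dt\,ds\;\leq\; c\int_0^T\!\!\int_0^T \frac{dt\,ds}{|t-s|^{2\theta}}\;<\;\infty,
\]
where finiteness of the right-hand side once again requires precisely $\theta<1/2$. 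Combining the four estimates and taking square roots proves \eqref{sz52}.

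The computation is essentially routine once the a priori bounds of Sections \ref{sec6} and \ref{sec7} are in place; the only mild subtlety is the $\mu$-dependent term $J_{\mu,3}$, whose uniform control is not automatic and relies on the sharp estimate \eqref{sz173} (equivalently \eqref{sz159} with $p=2$). The threshold $\theta<1/2$ is imposed by the stochastic integral alone: the deterministic pieces belong to $H^1(0,T;H)$ uniformly in $\mu$ and thus satisfy the seminorm bound for all $\theta<1$.
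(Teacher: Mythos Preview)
Your proof is correct and follows essentially the same approach as the paper: both decompose $\Phi_\mu$ via \eqref{sz51}, establish the increment bound $\mathbb{E}|J_{\mu,k}(t)-J_{\mu,k}(s)|_H^2\leq c_T|t-s|$ for each $k$ using the same a priori estimates (\eqref{sz70-bis} for $J_{\mu,1}$ and $J_{\mu,2}$, \eqref{sz173} together with \eqref{sz70} for $J_{\mu,3}$, and \eqref{sz5} for $J_{\mu,4}$), and then integrate against $|t-s|^{-1-2\theta}$. One small remark: the bound you quote from \eqref{sz173} is not self-contained---its right-hand side still contains $\mathbb{E}\int_0^t|u_\mu(s)|_{H^1}^2\,ds$, so you implicitly also invoke \eqref{sz70} to close it, exactly as the paper does.
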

	\begin{proof}
We are going to estimate every  term $J_{\mu, k}(t)$  in \eqref{sz51}. 	
Thanks to \eqref{sz70-bis}, we have
\[\mathbb{E}\,\vert J_{\mu,1}(t)-J_{\mu,1}(s)\vert_{H}^2\leq \int_s^t\mathbb{E}\,\vert u_\mu(r)\vert_{H^2}^2\,dr\,|t-s|\leq c_T\,|t-s|,	
\]
and,  due to \eqref{jan2},
\[
\mathbb{E}\,\vert J_{\mu,2}(t)-J_{\mu,2}(s)\vert_{H}^2\leq \int_s^t\mathbb{E}\,\vert u_\mu(r)\vert_{H^1}^4\,dr|t-s|\leq \int_s^t\mathbb{E}\,\vert u_\mu(r)\vert_{H^2}^2\,dr\,|t-s|\leq c_T\,|t-s|.\]
Next, due  to \eqref{sz173} and \eqref{sz70}, we have
\[
\mathbb{E}\,\norma{H}{ J_{\mu,3}(t)-J_{\mu,3}(s)}{H}^2\leq \mu^2\int_s^t\mathbb{E}\,\vert v_\mu(r)\vert_{H}^4\,dr\,|t-s|\leq c_T\,|t-s|.	
\]
Finally, thanks to \eqref{sz5} we have
\[
\mathbb{E}\vert J_{\mu,4}(t)-J_{\mu,4}(s)\vert^2_H	\leq \mathbb{E}\int_s^t\Vert \sigma(u_\mu(r))\Vert^2_{\mathscr{T}_2(K,H)}\,dr\leq c_T\,|t-s|.
\]
Therefore, by combining together all these bounds, we conclude  that \eqref{sz52} holds, for every $\theta<1/2$.

	\end{proof}
\begin{Lemma}
\label{lemma7.2}
Under Hypotheses \ref{hyp-H3}, \ref{hyp-H4} and \ref{hyp-H5},  for every $(u_0,v_0) \in\,\mathscr{H}_2\cap \mathscr{M}$ and $T>0$ the family $\{\mathscr{L}(u_\mu)\}_{\mu \in\,(0,1)}$ is tight in $L^q(0,T;H^\alpha)$, for every $\alpha \in\,[0,2)$ and $q<4/\alpha$.
\end{Lemma}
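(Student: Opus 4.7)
Plan: I will apply an Aubin--Lions--Simon compactness theorem combined with Prokhorov's characterization of tightness. It suffices to exhibit, for each $\eta>0$, a set $K_\eta\subset L^4(0,T;H^1)$ that is relatively compact in $L^4(0,T;H^1)$ and satisfies $\mathbb{P}(u_\mu\in K_\eta)\ge 1-\eta$ uniformly in $\mu\in(0,1)$.

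The pathwise ingredients are in hand: integrating \eqref{sz70-bis} yields the uniform moment bound $\mathbb{E}\|u_\mu\|_{L^2(0,T;H^2)}^2\le c_T$, while the constraint $u_\mu(t)\in M$ gives $\|u_\mu\|_{L^\infty(0,T;H)}=1$ almost surely. For the time modulus I use the auxiliary process $\Phi_\mu=\gamma u_\mu+\mu\partial_t u_\mu$ from Lemma \ref{lemma4}, writing $\gamma(\tau_h u_\mu-u_\mu)=(\tau_h\Phi_\mu-\Phi_\mu)-\mu(\tau_h\partial_t u_\mu-\partial_t u_\mu)$. Since $\sup_{\mu\in(0,1)}\mathbb{E}\,[\Phi_\mu]_{W^{\theta,2}(0,T;H)}<\infty$ for every $\theta<1/2$, choosing $\theta\in(1/4,1/2)$ and applying \eqref{sz50} with $r=2$, $p=4$ and $\sigma=\theta$ gives
\[\mathbb{E}\|\tau_h\Phi_\mu-\Phi_\mu\|_{L^4(0,T-h;H)}\le c_T\,h^{\theta-1/4},\]
a modulus vanishing uniformly in $\mu$ as $h\to 0$.

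The main obstacle is the correction $\mu\partial_t u_\mu$: Lemma \ref{lemma5.6} (inequality \eqref{sz24}) only yields $\mathbb{E}\|\mu\partial_t u_\mu\|_{L^\infty(0,T;H)}\le\sqrt{c_T}\,\mu^{1/4}$, which is uniformly small only as $\mu\to 0$. I circumvent this by a splitting argument. Given $\eta>0$ and any prescribed $\delta>0$, choose $\mu_0>0$ so small that Chebyshev gives $\mathbb{P}(\|\mu\partial_t u_\mu\|_{L^\infty(0,T;H)}>\delta)<\eta/2$ for all $\mu\in(0,\mu_0)$; then $\|\tau_h(\mu\partial_t u_\mu)-\mu\partial_t u_\mu\|_{L^4(0,T-h;H)}\le 2T^{1/4}\delta$ uniformly in $h$ on that event. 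On the complementary range $\mu\in[\mu_0,1)$, Lemmas \ref{lemma20} and \ref{lemma1+delta} control $\mathbb{E}\int_0^T\|\partial_t u_\mu(s)\|_{H^1}^2\,ds$ with constants depending on $\mu_0$, and the Cauchy--Schwarz bound $\|u_\mu(t+h)-u_\mu(t)\|_H\le h^{1/2}\|\partial_t u_\mu\|_{L^2(t,t+h;H)}$ then yields a time modulus in $L^4(0,T-h;H)$ of order $h^{1/2}$ uniformly for $\mu\in[\mu_0,1)$.

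It remains to assemble the compact set. Define
\[K_\eta=\Big\{u\in L^4(0,T;H^1):\|u\|_{L^2(0,T;H^2)}\le R,\ \|u\|_{L^\infty(0,T;H)}\le 1,\ \|\tau_h u-u\|_{L^4(0,T-h;H)}\le g(h)\ \forall\,h\in(0,T)\Big\},\]
for a suitable $R>0$ and a modulus $g(h)\to 0$. Its relative compactness in $L^4(0,T;H^1)$ follows from Simon's theorem (with $B_0=H^2$, $B=H^1$, $B_1=H$) together with the Gagliardo--Nirenberg interpolation $\|w\|_{H^1}^4\le c\|w\|_{H^2}^2\|w\|_H^2$ and the pathwise $L^\infty(0,T;H)$-bound, which upgrade convergence in $L^2(0,T;H)$ to convergence in $L^4(0,T;H^1)$. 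By the moment bounds collected above and Chebyshev's inequality, $R$ and $g$ can be chosen so that $\mathbb{P}(u_\mu\in K_\eta)\ge 1-\eta$ uniformly in $\mu\in(0,1)$, which gives tightness by Prokhorov's theorem.
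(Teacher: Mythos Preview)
Your splitting argument for the time modulus of $u_\mu$ does not close. In the small-$\mu$ regime you only obtain, on the event $\{\|\mu\,\partial_t u_\mu\|_{L^\infty(0,T;H)}\le\delta\}$,
\[
\|\tau_h(\mu\,\partial_t u_\mu)-\mu\,\partial_t u_\mu\|_{L^4(0,T-h;H)}\le 2T^{1/4}\delta,
\]
which is independent of $h$. Combined with the $\Phi_\mu$-estimate this gives, for $\mu\in(0,\mu_0)$, the bound $\gamma\,\|\tau_h u_\mu-u_\mu\|_{L^4(0,T-h;H)}\le c\,h^{\theta-1/4}+2T^{1/4}\delta$ on a high-probability event. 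But a single modulus $g$ in the definition of $K_\eta$ must satisfy $g(h)\to0$ as $h\to0$ for Simon's criterion to yield relative compactness, and no such $g$ can dominate the fixed positive number $2T^{1/4}\delta$. Consequently the set $K_\eta$ you write down is \emph{not} relatively compact in $L^4(0,T;H^1)$, and the final Chebyshev step does not deliver tightness. Letting $\delta$ depend on $\eta$ does not help: once $\eta$ is fixed, so is $\delta$, and the modulus still fails to vanish.

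The paper sidesteps this obstruction by arguing tightness for $\Phi_\mu$ and $\mu\,\partial_t u_\mu$ \emph{separately} rather than for $u_\mu$ directly. From \eqref{sz52} together with the uniform $L^2(0,T;H^2)$ bound on $\Phi_\mu$ (coming from \eqref{sz70-bis} and \eqref{sz154-bis}), Simon's theorem gives tightness of $\{\Phi_\mu\}_{\mu\in(0,1)}$ in $L^\infty(0,T;H)$. Independently, \eqref{sz24} gives $\mu\,\partial_t u_\mu\to0$ in $L^\infty(0,T;H)$ in probability, so $\{\mu\,\partial_t u_\mu\}$ converges in distribution to the Dirac mass at $0$ and is therefore tight by Prokhorov's theorem---\emph{without} any time-modulus control. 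Tightness of the pair then passes to $u_\mu=(\Phi_\mu-\mu\,\partial_t u_\mu)/\gamma$ in $L^\infty(0,T;H)$, and the interpolation $\|w\|_{H^1}^2\le\|w\|_H\,\|w\|_{H^2}$ together with \eqref{sz70-bis} upgrades this to $L^4(0,T;H^1)$, which is essentially your final step. The idea you are missing is precisely that convergence in law to a constant furnishes the compact sets for $\mu\,\partial_t u_\mu$ for free; trying to extract them from an explicit equicontinuity bound is what makes your argument stall.
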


\begin{proof}
Due to \eqref{sz50},  for every $p<\infty$ there exists $\theta<1/2$ such that the set
\[K_L:=\left\{ f \in\,L^	p(0,T;H)\,:\,[f]_{\dot{W}^{\theta,2}(0,T;H)}\leq L,\ \int_0^T\vert f(s)\vert_{H^2}^2\,ds\leq L\right\}\]
is relatively compact in $L^p(0,T;H)$, for every  $L>0$ (for a proof see e.g. \cite[Theorem 3]{simon1986}).

Now, due to \eqref{sz70-bis} and \eqref{sz154-bis}, we have that $u_\mu+\mu \partial_t u_\mu$ is bounded in $L^2(\Omega;L^2(0,T;H^2))$. Then, according to \eqref{sz52},  for every  $\e>0$ there exists $L_\e>0$ such that
\[\mathbb{P}\left(u_\mu+\mu\,\partial_t u_\mu \in\,K_{L_\epsilon}\right)\geq 1-\epsilon,\;\;\;\ \ \ \ \ \ \mu \in\,(0,1).\]
This means that the family $\{u_\mu+\mu \partial_t u_\mu\}_{\mu \in\,(0,1)}$ is tight in $L^p(0,T;H)$, for every $p<\infty$.
Moreover, as a consequence of \eqref{sz24}, the family $\{\mu \partial_t u_\mu\}_{\mu \in\,(0,1)}$ is tight in $L^\infty(0,T;H)$,  and this allows to conclude that  the family $\{u_\mu\}_{\mu \in\,(0,1)}$ is tight in $L^p(0,T;H)$, for every $p<\infty$.

In particular, for every $p<\infty$ and $\e>0$ there exists a relatively compact set $K_{1,\e, p}\subset L^p(0,T;H)$ such that
\[\mathbb{P}\left( u_\mu \in\,K_{1,\e, p}\right)\geq 1-\frac \e 2,\;\;\; \mu \in\,(0,1).\]
Moreover, according to \eqref{sz70-bis}, for every $\e>0$ there exists $M_\e>0$ such that
\[\mathbb{P}\left( u_\mu \in\,K_{2,\e}\right)\geq 1-\frac \e 2,\;\;\; \mu \in\,(0,1),\]
where
\[K_{2,\e}:=\left\{ f \in\,L^2(0,T;H^2)\ :\ \vert f\vert_{L^2(0,T;H^2)}\leq M_\e\right\}.\]

Now, for every $\alpha \in\,[0,2)$ we have
\[ \vert h\vert_{H^\alpha}\leq \norm{u}{H}^{1-\alpha/2}\,\norm{u}{H^{2}}^{\alpha/2}.\]
Therefore, if we set
\begin{equation}
\label{final2}	
q:=\frac{4p}{\alpha p+4-2\alpha},\end{equation}
we get
\begin{align}\label{final1}
\begin{split}
\vert \tau_h f-f\vert_{L^q(0,T-h;H^\alpha)}\leq \vert \tau_h f-f\vert_{L^p(0,T-h;H)}^{1-\alpha/2}\vert \tau_h f-f\vert_{L^2(0,T-h;H^2)} ^{\alpha/2}.	
\end{split}
	\end{align}
In view of the characterization of compact sets in $L^p(0,T;H)$ given in \cite[Theorem 1]{simon1986}, we have that
\[\lim_{h\to 0}\,\sup_{f \in\,K_{1,\e, p}}\vert \tau_h f-f\vert_{L^p(0,T-h;H)}=0.\]
Therefore, thanks to \eqref{final1}, we get
\[\lim_{h\to 0}\,\sup_{f \in\,K_{1,\e, p}}\vert \tau_h f-f\vert_{L^q(0,T-h;H^\alpha)}=0.\]
By applying again \cite[Theorem 3]{simon1986}, we conclude that
the set $K_{1, \e, p}\cap K_{2, \e}$ is relatively compact in $L^{q}(0,T;H^\alpha)$ and this allows to
conclude that the family $\{u_\mu\}_{\mu \in\,(0,1)}$ is tight in $L^{q}(0,T;H^\alpha)$, just by noticing that
\[\mathbb{P}\left( u_\mu \in\,K_{1,\e}\cap K_{2,\e, p}\right)\geq 1-\e,\ \ \ \ \ \ \mu \in\,(0,1).\]
Finally, since we can take any arbitrary $p<\infty$, due to \eqref{final2} we have that we can take any $q<4/\alpha$ and our proof is concluded.
\end{proof}

\section{Proof of Theorem \ref{teo8.3}}
\label{sec9}

We start with the following fundamental identity.

\begin{Lemma}
\label{lemma2}
Assume Hypotheses \ref{hyp-H3} and \ref{hyp-H4}. Then, for every $(u_0,v_0) \in\,\mathscr{H}_1\cap \mathscr{M}$ and every  $\mu>0$ and $t \in\,[0,T]$, we have
\[\mu\int_0^t	\norma{H}{ \partial_t u_\mu(s)}{H}^2 u_\mu(s)\,ds=\frac 1{2\gamma}\int_0^t \Vert \sigma(u_\mu(s))\Vert_{\mathscr{T}_2(K,H)}^2 u_\mu(s)\,ds+R_\mu(t),\]
where
\begin{align*}
R_\mu(t):=&\,\frac{\mu^2}{2\gamma}\norma{H}{ v_0}{H}^2u_0 -\frac{\mu^2}{2\gamma}\norma{H}{ \partial_t u_\mu(t)}{H}^2u_\mu(t)+	\frac{\mu}\gamma\int_0^t u_\mu(s)\scalar{H}{ \sigma_0(u_\mu(s))d\WP(s)}{\partial_t u_\mu(s)}\\[10pt]
&\hsl+\frac{\mu^2}{2\gamma}\int_0^t\norma{H}{ \partial_t u_\mu(s)}{H}^2\partial_t u_\mu(s)\,ds+\frac\mu\gamma\int_0^tu_\mu(s)\scalar{H}{ \partial_t u_\mu(s)}{\Delta u_\mu(s)}\,ds\\[10pt]
&=:\frac{\mu^2}{2\gamma}\norma{H}{ v_0}{H}^2u_0 +\sum_{j=1}^4 I_{\mu,k}(t).
\end{align*}

\end{Lemma}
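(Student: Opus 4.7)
The identity is structural: it is what one obtains by applying the It\^o formula to the $H$-valued process $t\mapsto \|v_\mu(t)\|_H^2\,u_\mu(t)$ and then solving for the term $\mu\|v_\mu(t)\|_H^2 u_\mu(t)$. My plan is to carry this out as follows.

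First, I would apply the real-valued It\^o formula to the scalar process $\|v_\mu(t)\|_H^2$. Using the second equation in \eqref{system} and recording the quadratic variation coming from the stochastic integral $\mu^{-1}\sigma(u_\mu(t))\,dW^Q(t)$, one obtains
\[
d\|v_\mu(t)\|_H^2 = \tfrac{2}{\mu}\scalar{H}{v_\mu(t)}{\Delta u_\mu(t)+ \|u_\mu(t)\|_{H^1}^2 u_\mu(t) - \mu\|v_\mu(t)\|_H^2 u_\mu(t) - \gamma v_\mu(t)}\,dt
+ \tfrac{2}{\mu}\scalar{H}{v_\mu(t)}{\sigma(u_\mu(t))\,dW^Q(t)} + \tfrac{1}{\mu^2}\|\sigma(u_\mu(t))\|_{\mathscr{T}_2(K,H)}^2\,dt.
\]
At this point I would invoke Proposition \ref{proposition 3.1}, which guarantees $\scalar{H}{u_\mu(t)}{v_\mu(t)}=0$, to kill the two terms involving $\scalar{H}{v_\mu}{u_\mu}$. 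I would also use \eqref{eq-3.20} (or equivalently the definition \eqref{eqn-sigma-def-2}) to replace $\sigma$ by $\sigma_0$ inside the stochastic integral $\scalar{H}{v_\mu}{\sigma(u_\mu)dW^Q}$. This leaves
\[
d\|v_\mu(t)\|_H^2 = \tfrac{2}{\mu}\scalar{H}{v_\mu(t)}{\Delta u_\mu(t)}\,dt - \tfrac{2\gamma}{\mu}\|v_\mu(t)\|_H^2\,dt + \tfrac{2}{\mu}\scalar{H}{v_\mu(t)}{\sigma_0(u_\mu(t))\,dW^Q(t)} + \tfrac{1}{\mu^2}\|\sigma(u_\mu(t))\|_{\mathscr{T}_2(K,H)}^2\,dt.
\]

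Next, since $t\mapsto u_\mu(t)$ has finite variation with $du_\mu(t)=v_\mu(t)\,dt$, the product rule for the $H$-valued process $\|v_\mu(t)\|_H^2 u_\mu(t)$ reduces to
\[
d\bigl(\|v_\mu(t)\|_H^2 u_\mu(t)\bigr) = \bigl(d\|v_\mu(t)\|_H^2\bigr)\,u_\mu(t) + \|v_\mu(t)\|_H^2\,v_\mu(t)\,dt,
\]
with no extra covariation term. Substituting the previous display and solving for the quantity $(2\gamma/\mu)\|v_\mu(t)\|_H^2 u_\mu(t)\,dt$, then multiplying through by $\mu^2/(2\gamma)$, yields
\[
\mu\|v_\mu(t)\|_H^2 u_\mu(t)\,dt = -\tfrac{\mu^2}{2\gamma}\,d\bigl(\|v_\mu(t)\|_H^2 u_\mu(t)\bigr) + \tfrac{\mu}{\gamma}\scalar{H}{v_\mu(t)}{\Delta u_\mu(t)}\,u_\mu(t)\,dt + \tfrac{\mu}{\gamma}\,u_\mu(t)\scalar{H}{v_\mu(t)}{\sigma_0(u_\mu(t))\,dW^Q(t)} + \tfrac{1}{2\gamma}\|\sigma(u_\mu(t))\|_{\mathscr{T}_2(K,H)}^2\,u_\mu(t)\,dt + \tfrac{\mu^2}{2\gamma}\|v_\mu(t)\|_H^2\,v_\mu(t)\,dt.
\]
Integrating from $0$ to $t$ and identifying the five contributions on the right-hand side with the boundary term $\frac{\mu^2}{2\gamma}\|v_0\|_H^2 u_0$ plus $I_{\mu,1}(t),\dots,I_{\mu,4}(t)$ gives the stated formula.

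The only genuine subtlety is justifying the It\^o formula for $\|v_\mu\|_H^2$ and the $H$-valued product rule above at the regularity level that we have: by Theorem \ref{thm-existence-example}, if $(u_0,v_0)\in \mathscr{H}_1\cap\mathscr{M}$ the solution lies in $C([0,\infty);\mathscr{H}_1)$, so $v_\mu \in C([0,\infty);H^1)$ and $\Delta u_\mu\in C([0,\infty);H)$, which makes every inner product and trace above well defined. The vector-valued It\^o/product step can be handled by the Lemma \ref{lem-Ito}-type argument already used in the paper (pair with a test vector in $H$, apply the scalar It\^o formula, then remove the test vector); I would simply point to this and cite \cite{Brz+Masl+Seidl_2005}. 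The main obstacle is thus purely bookkeeping: making sure that the tangent-bundle cancellations and the $\sigma\leftrightarrow\sigma_0$ substitutions are used in exactly the right places, so that the five integrated terms line up with $I_{\mu,1},\dots,I_{\mu,4}$ and the main term on the right-hand side of the identity.
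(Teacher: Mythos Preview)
Your proof is correct and follows essentially the same route as the paper: apply the It\^o formula to $\|v_\mu(t)\|_H^2$, use the product rule with the finite-variation factor $u_\mu$, rearrange to isolate $\mu\|v_\mu\|_H^2 u_\mu\,dt$, and integrate. The paper's version is terser (it writes the post-cancellation form of $d\|v_\mu\|_H^2$ directly and switches from $\sigma$ to $\sigma_0$ without comment), whereas you explicitly invoke Proposition~\ref{proposition 3.1} and \eqref{eq-3.20} for those steps; your added remarks on regularity are not in the paper but are harmless.
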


\begin{proof}
As a consequence of the  It\^o Lemma \ref{lem-Ito-final}, we have
\begin{align*}
\frac{\mu^2}2d\,\norma{H}{ v_\mu(t)}{H}^2=&\mu \scalar{H}{ v_\mu(t)}{\Delta u_\mu(t)}\,dt-\mu\gamma \norma{H}{ v_\mu(t)}{H}^2\,dt+\frac 12\Vert \sigma(u_\mu(t))\Vert_{\mathscr{T}_2(K,H)}^2\,dt\\[10pt]
&+\mu\scalar{H}{ \sigma(u_\mu(t))d\WP(t)}{v_\mu(t)}.	\end{align*}
This implies that
\begin{align*}
\frac{\mu^2}2d\,\left(\norma{H}{ v_\mu(t)}{H}^2u_\mu(t)\right)=&\frac{\mu^2}2d\norma{H}{ v_\mu(t)}{H}^2 u_\mu(t)+\frac{\mu^2}2\,\norma{H}{ v_\mu(t)}{H}^2 v_\mu(t)\,dt\\[10pt]
&\hs\hsl=\mu \scalar{H}{ v_\mu(t)}{\Delta u_\mu(t)} u_\mu(t)\,dt-\mu\gamma \norma{H}{ v_\mu(t)}{H}^2 u_\mu(t)\,dt+\frac 12\Vert \sigma(u_\mu(t))\Vert_{\mathscr{T}_2(K,H)}^2 u_\mu(t)\,dt\\[10pt]
&+\frac{\mu^2}2\,\norma{H}{ v_\mu(t)}{H}^2 v_\mu(t)\,dt+\mu\scalar{H}{ \sigma_0(u_\mu(t))d\WP(t)}{v_\mu(t)} u_\mu(t).
\end{align*}
Rearranging all terms, we get
\begin{align*}
\mu\gamma \norma{H}{ v_\mu(t)}{H}^2 u_\mu(t)\,dt=&
\frac 12\Vert \sigma(u_\mu(t))\Vert_{\mathscr{T}_2(K,H)}^2 u_\mu(t)\,dt-\frac{\mu^2}2d\,\left(\norma{H}{ v_\mu(t)}{H}^2u_\mu(t)\right)\\[10pt]
&\hs+\mu \scalar{H}{ v_\mu(t)}{\Delta u_\mu(t)} u_\mu(t)\,dt\\[10pt]
&+\frac{\mu^2}2\,\norma{H}{ v_\mu(t)}{H}^2 v_\mu(t)\,dt+\mu\scalar{H}{ \sigma_0(u_\mu(t))d\WP(t)}{v_\mu(t)} u_\mu(t),
\end{align*}
and the lemma follows once we divide both sides above by $\gamma$ and integrate with respect to time.

\end{proof}

\begin{Lemma}
\label{lemma3}
	Under Hypotheses \ref{hyp-H3}, \ref{hyp-H4} and \ref{hyp-H5}, for every $(u_0,v_0) \in\,\mathscr{H}_2\cap \mathscr{M}$ and $T>0$ we have
	\begin{equation}
	\label{sz35}
	\lim_{\mu\to 0}\mathbb{E}\sup_{t \in\,[0,T]}\norma{H}{ R_\mu(t)}{H}=0.	
	\end{equation}

\end{Lemma}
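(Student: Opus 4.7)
The strategy is to estimate each of the five summands in the decomposition
\[R_\mu(t)=\frac{\mu^2}{2\gamma}\norma{H}{v_0}{H}^2u_0+\sum_{k=1}^4 I_{\mu,k}(t)\]
separately, showing that each of them vanishes in $L^1(\Omega;C([0,T];H))$ as $\mu\to 0$. Throughout, the constraint $u_\mu(t)\in M$, i.e.\ $\norma{H}{u_\mu(t)}{H}=1$, will be used crucially to factor out the space variable in norms, and the uniform a-priori bounds from Sections \ref{sec6} and \ref{sec7}, together with Hypothesis \ref{hyp-H4}, will provide the required control on the velocity process $v_\mu=\partial_t u_\mu$.

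First, the deterministic summand $\frac{\mu^2}{2\gamma}\norma{H}{v_0}{H}^2u_0$ is $O(\mu^2)$ and trivially vanishes. For the first non-trivial term, writing
\[\sup_{t\in [0,T]}\norma{H}{I_{\mu,1}(t)}{H}\leq \frac{\mu^2}{2\gamma}\sup_{t\in [0,T]}\norma{H}{\partial_t u_\mu(t)}{H}^2,\]
I would invoke Lemma \ref{lemma5.6}, which yields $\mu^{3/2}\mathbb{E}\sup_{t\in [0,T]}\norma{H}{\partial_t u_\mu(t)}{H}^2\leq c_T$, so $\mathbb{E}\sup_t\norma{H}{I_{\mu,1}(t)}{H}=O(\mu^{1/2})$. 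For the stochastic term $I_{\mu,2}$, I would apply the Burkholder--Davis--Gundy inequality to the $H$-valued martingale, exploiting $\norma{H}{u_\mu(s)}{H}=1$ to obtain
\[\mathbb{E}\sup_{t\in [0,T]}\norma{H}{I_{\mu,2}(t)}{H}\leq \frac{c\mu}{\gamma}\,\mathbb{E}\left(\int_0^T \Vert \sigma_0(u_\mu(s))\Vert_{\mathscr{L}(K,H)}^2\norma{H}{\partial_t u_\mu(s)}{H}^2\,ds\right)^{1/2}.\]
Under Hypothesis \ref{hyp-H3} the operator norm of $\sigma_0$ is uniformly bounded, and the a-priori bound obtained by taking $p=1$ in \eqref{sz159} together with Hypothesis \ref{hyp-H4} gives $\mathbb{E}\int_0^T\norma{H}{\partial_t u_\mu}{H}^2\,ds\leq c_T/\mu$, producing a $O(\sqrt{\mu})$ rate.

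For the cubic term $I_{\mu,3}$, after bounding $\norma{H}{I_{\mu,3}(t)}{H}\leq \frac{\mu^2}{2\gamma}\int_0^T\norma{H}{\partial_t u_\mu(s)}{H}^3\,ds$, I would interpolate via Cauchy--Schwarz:
\[\mathbb{E}\int_0^T\norma{H}{\partial_t u_\mu}{H}^3\,ds\leq \left(\mathbb{E}\int_0^T\norma{H}{\partial_t u_\mu}{H}^2\,ds\right)^{1/2}\left(\mathbb{E}\int_0^T\norma{H}{\partial_t u_\mu}{H}^4\,ds\right)^{1/2}.\]
The first factor is controlled by $\sqrt{c_T/\mu}$ as above, while the second one requires applying \eqref{sz159} with $p=2$; combined with Hypothesis \ref{hyp-H4} this gives $\mathbb{E}\int_0^T\norma{H}{\partial_t u_\mu}{H}^4\,ds\leq c_T/\mu^2$. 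The product is $O(\mu^{-3/2})$, so $\mathbb{E}\sup_t\norma{H}{I_{\mu,3}(t)}{H}=O(\sqrt{\mu})$. Finally, for $I_{\mu,4}$, using $\norma{H}{u_\mu(s)}{H}=1$ and $\norma{H}{\Delta u_\mu(s)}{H}=\normb{H}{u_\mu(s)}\vert_{H^2}$ (up to lower order terms absorbed), Cauchy--Schwarz gives
\[\mathbb{E}\sup_{t\in [0,T]}\norma{H}{I_{\mu,4}(t)}{H}\leq \frac{\mu}{\gamma}\left(\mathbb{E}\int_0^T\norma{H}{\partial_t u_\mu}{H}^2\,ds\right)^{1/2}\left(\mathbb{E}\int_0^T\vert u_\mu\vert_{H^2}^2\,ds\right)^{1/2},\]
and \eqref{sz70-bis} together with the $L^2$-in-time velocity bound above yields a rate of $O(\sqrt{\mu})$. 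Summing the four contributions establishes \eqref{sz35}. The main technical obstacle is the cubic integral $I_{\mu,3}$, since na\"ive energy estimates alone are insufficient: one genuinely needs the higher-moment bound \eqref{sz159} with $p=2$, whose right-hand side is only tame thanks to the sharpened Hypothesis \ref{hyp-H4}.
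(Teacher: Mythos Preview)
Your proof is correct and follows essentially the same term-by-term strategy as the paper: both bound each $I_{\mu,k}$ separately via the a-priori estimates of Sections \ref{sec6}--\ref{sec7}, obtaining an $O(\sqrt{\mu})$ rate throughout. The only differences are tactical---for $I_{\mu,3}$ the paper uses H\"older with exponent $4/3$ (writing $\mu^2\int|v|^3\le c_T\sqrt{\mu}\,(\mu^2\int|v|^4)^{3/4}$) and invokes \eqref{sz29} and \eqref{sz70-bis}, while you split $|v|^3=|v|\cdot|v|^2$ via Cauchy--Schwarz and cite \eqref{sz159} with $p=2$; for $I_{\mu,4}$ the paper integrates by parts to use $|\partial_t u_\mu|_{H^1}|u_\mu|_{H^1}$, whereas you keep $|\partial_t u_\mu|_H|u_\mu|_{H^2}$---but both routes reach the same conclusion with the same rate.
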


\begin{proof}
We use here the same notations as in Lemma \ref{lemma2} and we write
\[R_\mu(t)=\frac{\mu^2}{2\gamma}\norma{H}{ v_0}{H}^2u_0 +\sum_{j=1}^4 I_{\mu,k}(t),\;\;\; t \in\,[0,T].\]	

For $I_{\mu,1}(t)$  we have
\[\norma{H}{ I_{\mu, 1}(t)}{H}=\frac{\mu^2}{2\gamma}\,\norma{H}{ \partial_t u_\mu(t)}{H}^2,\]
and, thanks to \eqref{sz24}, we get
\begin{equation}
\label{sz40}
\lim_{\mu\to 0}\,\mathbb{E}\,\sup_{t \in\,[0,T]}\norma{H}{ I_{\mu,1}(t)}{H}=0.	
\end{equation}

For $I_{\mu,2}(t)$, due to \eqref{sz5} we have
\begin{align*}\mathbb{E}\sup_{t \in\,[0,T]}\norma{H}{ I_{\mu,2}(t)}{H}&\leq c\,\mu\left(\int_0^T \mathbb{E}\Vert \sigma_0(u_\mu(s))\Vert_{\mathscr{T}_2(K,H)}^2\norma{H}{ \partial_t u_\mu(s)}{H}^2\,ds\right)^{\frac 12}\\[10pt]
&=c\,\sqrt{\mu}\left(\mu \int_0^T \mathbb{E}\,\norma{H}{ \partial_t u_\mu(s)}{H}^2\,ds\right)^{\frac 12},\end{align*}
and  \eqref{sz154} allows to conclude that
\begin{equation}
\label{sz41}
\lim_{\mu\to 0}\,\mathbb{E}\,\sup_{t \in\,[0,T]}\norma{H}{ I_{\mu,2}(t)}{H}=0.	
	\end{equation}

For $I_{\mu,3}(t)$, we have
\begin{align*}
\mathbb{E}\sup_{t \in\,[0,T]}\norma{H}{ I_{\mu,3}(t)}{H}&\leq c\,\mu^2\int_0^T\mathbb{E}\,\norma{H}{ \partial_t u_\mu(s)}{H}^3\,dt\leq c_T\sqrt{\mu}\left(\mu^2\int_0^T\mathbb{E}\norma{H}{ \partial_t u_\mu(s)}{H}^4\,ds\right)^{\frac 34},
	\end{align*}
so that, in view of \eqref{sz29} and \eqref{sz70-bis}, we have
\begin{equation}
\label{sz42}
\lim_{\mu\to 0}\,\mathbb{E}\,\sup_{t \in\,[0,T]}\norma{H}{ I_{\mu,3}(t)}{H}=0.	
	\end{equation}

Finally,  since $\vert u_\mu\vert_H=1$, for $I_{\mu,4}(t)$, we have
\begin{align*}
\mathbb{E}\sup_{t \in\,[0,T]}\norma{H}{ I_{\mu,4}(t)}{H}\leq& c\,\mu\int_0^T\mathbb{E} \vert \partial_t u_\mu(s)\vert_{H^1}\vert u_\mu(s)\vert_{H^1}\,ds\\[10pt]
&\hs\leq c\left(\mu\int_0^T\mathbb{E} \vert u_\mu(s)\vert^4_{H^1}\,ds\right)^{\frac 14}\left(\mu\int_0^T\mathbb{E} \vert \partial_t u_\mu(s)\vert^{\frac 43}_{H^1}\,ds\right)^{\frac 34}\\[10pt]
&\hs\leq c_T\left(\mu\int_0^T\mathbb{E} \vert u_\mu(s)\vert^4_{H^1}\,ds\right)^{\frac 14}\left(\mu\int_0^T\mathbb{E} \vert \partial_t u_\mu(s)\vert^2_{H^1}\,ds\right)^{\frac 12}\mu^{\frac 14},
	\end{align*}
and then, according to \eqref{sz16} and \eqref{sz70} and to  \eqref{sz29} and \eqref{sz70-bis},  we have
 \begin{equation}
\label{sz43}
\lim_{\mu\to 0}\,\mathbb{E}\,\sup_{t \in\,[0,T]}\norma{H}{ I_{\mu,4}(t)}{H}=0.	
	\end{equation}

As a result of \eqref{sz40}, \eqref{sz41}, \eqref{sz42} and \eqref{sz43} we obtain \eqref{sz35}.
\end{proof}

	\subsection{Proof of Theorem \ref{teo8.3}}
	In Lemma \ref{lemma7.2} we have proven that the family $\{\mathscr{L}(u_\mu)\}_{\mu \in\,(0,1)}$ is tight in $L^q(0,T;H^\alpha)$, for every $\alpha \in\,[0,2)$ and $q<4/\alpha$. Here we take $\a \in\,[1,2)$. Thanks to \eqref{sz24}, this implies that
	$\{\mathscr{L}(u_\mu, \mu\,\partial_t u_\mu) \}_{\mu \in\,(0,1)}$ is tight in   $L^q(0,T;H^\alpha)\times L^\infty(0,T;H)$, so that, due to the Prohorov Theorem, there exists a weak limit point in the same space. Now, let us     define
	\[\mathscr{K}_T:=\left[L^q(0,T;H^\alpha)\times L^\infty(0,T;H)\right]^2\times C([0,T];U),\]
	where $U$ is a Hilbert space containing the reproducing kernel $K$ with Hilbert-Schmidt embedding. Thanks to the Skorokhod theorem for any two sequences $\{\mu^1_k\}_{j \in\,\mathbb{N}}$ and  $\{\mu^2_k\}_{j \in\,\mathbb{N}}$, both converging to zero, there exist two subsequences, still denoted by $\{\mu^1_k\}_{j \in\,\mathbb{N}}$ and  $\{\mu^2_k\}_{j \in\,\mathbb{N}}$, a sequence of random variables
	\[\mathscr{Y}_k:=((\varrho^1_k,\vartheta^1_k),(\varrho^2_k,\vartheta^2_k),\hat{w}_k^Q),\ \ \ \ k \in\,\mathbb{N},\]
	in $\mathscr{K}_T$ and a random variable $\mathscr{Y}=(\varrho^1,\varrho^2,\hat{w}^Q))$ in $\mathscr{K}_T$, all defined on some probability space $(\hat{\Omega}, \hat{\mathscr{F}}, \hat{\mathbb{P}})$, such that
	 \begin{equation}
	\label{sz200}
	\mathscr{L}(\mathscr{Y}_k)=\mathscr{L}((u_{\mu^1_{k}},\ \mu^1_{k}\,\partial_t u_{\mu^1_k}),\,(u_{\mu^2_{k}},\ \mu^2_{k}\,\partial_t u_{\mu^2_k}),\,\WP),\;\;\; k \in\,\mathbb{N},	
	\end{equation}
and, for $i=1, 2$,
\begin{equation}
\label{sz201-bis}
\lim_{k\to\infty} \vert \varrho^i_{k}-\varrho^i\vert_{ L^q(0,T;H^\alpha)}+	\vert \vartheta^i_{k}\vert_{L^\infty(0,T;H)}+\vert \hat{w}_k^Q-\hat{w}^Q\vert_{C([0,T];U)}=0,\;\;\;\mathbb{P}-\text{a.s.}	\end{equation}
Notice that 	this implies that $\varrho^i(t) \in\, M$, $\hat{\mathbb{P}}$-a.s. and, due to \eqref{sz70-bis},
$\varrho^i \in\,L^2(\Omega,L^2(0,T;H^2))$, for $i=1, 2$.

Next, a filtration $(\hat{\mathscr{F}}_t)_{t\geq 0}$ is introduced in $(\hat{\Omega},\hat{\mathscr{F}},\hat{\mathbb{P}})$, by taking the augmentation of the canonical filtration of $(\rho^1, \rho^2,\hat{w}^Q)$, generated by the restrictions of  $(\rho^1, \rho^2,\hat{w}^Q)$ to every interval $[0,t]$. Due to this construction, $\hat{w}^Q$ is a $(\hat{\mathscr{F}}_t)_{t\geq 0}$ Wiener process with covariance $Q^*Q $ (for a proof see \cite[Lemma 4.8]{DHV2016}).

	Now, if we show that $\varrho^1=\varrho^2$, we have that $u_\mu$ converges in probability  in $L^q(0,T;H^\alpha)$ to some $u
\in\, L^2(\Omega; L^2(0,T;H^2))$. Actually, as observed by Gy\"ongy and Krylov in
\cite{gk}, if $E$ is any Polish space equipped with the Borel
$\sigma$-algebra, a sequence $(\xi_n)_{n \in\,\mathbb{N}}$ of $E$-valued random
variables converges in probability if and only if for every pair
of subsequences $(\xi_m)_{m \in\,\mathbb{N}}$ and $(\xi_l)_{l \in\,\mathbb{N}}$ there exists an
$E^2$-valued subsequence $\eta_k:=(\xi_{m(k)},\xi_{l(k)})$
converging weakly to a random variable $\eta$ supported on the
diagonal $\{(h,k) \in\,E^2\ :\ h=k\}$.

In order to show that $\varrho^1=\varrho^2$, we prove that they are both a solution of equation \eqref{limit-eq}, which has pathwise uniqueness. Due to \eqref{sz200}, we have that
both $(\varrho^1_k,\vartheta_k^1)$ and $(\rho^2_k,\vartheta_k^2)$ satisfy equation \eqref{sz51}, with $\WP$ replaced by $\hat{w}_k^Q$. Then, if we first take the scalar product in $H$ of each term in  \eqref{sz51} with an arbitrary but fixed  $\psi\in C^\infty_0(D)$ and then integrate by parts, we get
\begin{align}
\begin{split}
\label{rhoApproxi}
\scalar{H}{ \gamma &\varrho^i_k(t)+\vartheta^i_k(t)}{\psi} =\scalar{H}{ \gamma\,u_0+\mu_k v_0}{\psi} -\int_0^t \scalar{H}{ \nabla\varrho^i_k(s)) }{\nabla \psi}\,ds\\[10pt]
&+\int_0^t  \vert \varrho^i_k(s)\vert_{H^1}^2\scalar{H}{\varrho_k(s)}{\psi}\, ds -\mu_k\int_0^t\vert\vartheta_k^i(s)\vert_{H}^2\scalar{H}{\varrho_k^i(s)}{\psi}\, ds\\[10pt]
&\hsp+\int_0^t\scalar{H}{ \sigma(\rho^i_k(s))d\hat{w}_k^Q (s)}{\psi}
=:\sum_{j=1}^4 I^i_{k,j}+\int_0^t\scalar{H}{ \sigma(\rho^i_k(s))d\hat{w}_k^Q (s)}{\psi}(t).
\end{split}
\end{align}
Clearly
\[\lim_{k\to\infty} I^i_{k,1}=\scalar{H}{ \gamma u_0}{\psi}.\]
and, due to \eqref{sz201-bis}, since $\a\geq 1$, we have
\[\lim_{k\to\infty}I^i_{k,2}(t)=\int_0^t \scalar{H}{ \nabla \varrho^i(s)}{\nabla \psi}\,ds,\;\;\; \ \hat{\mathbb{P}}-\text{a.s}.\]
Moreover
\begin{align*}
\left| I^i_{k,3}(t)\right.&-\left.\int_0^t \vert \varrho^i(s)\vert^2_{H^1}\scalar{H}{ \varrho^i(s)}{\psi}\,ds\right|\\[10pt]
&\hsl\leq \int_0^t\left|\vert \varrho^i_k(s)\vert_{H^1}^2-\vert \varrho^i(s)\vert_{H^1}^2\right|\left|\scalar{H}{\varrho^i_k(s)}{\psi}\,\right|\,ds +\int_0^t\vert \varrho^i(s)\vert_{H^1}^2\left|\scalar{H}{\varrho^i_k(s)-\varrho^i(s)}{\psi}\,\right|\,ds\\[10pt]
&\hsl\leq \norma{H}{\psi}{H}\int_0^t\vert \varrho^i_k(s)-\varrho^i(s)\vert_{H^1}\left(\vert\varrho^i_k(s)\vert_{H^1}+\vert\varrho^i(s)\vert_{H^1}\right)\,ds\\[10pt]
&+\norma{H}{\psi}{H}\int_0^t\vert \varrho^i(s)\vert_{H^1}^2\,\norma{H}{\varrho^i_k(s)-\varrho^i(s)}{H}\,ds\\[10pt]
&\hsl\leq \norma{H}{\psi}{H}\,\vert \varrho^i_k-\varrho^i\vert_{L^2(0,T;H^1)}\left(\vert\varrho^i_k\vert_{L^2(0,T;H^1)}+\vert\varrho^i\vert_{L^2(0,T;H^1)}\right)\\[10pt]
& \hsp \hsp+\norma{H}{\psi}{H}\vert \varrho^i\vert_{L^2(0,T;H^2)}\vert \varrho^i_k-\varrho^i\vert_{L^2(0,T;H)}.\end{align*}
Thanks again to \eqref{sz201-bis} this allows to conclude that
\[\lim_{k\to\infty} I^i_{k,3}(t)=\int_0^t \vert \varrho^i(s)\vert^2_{H^1}\scalar{H}{ \varrho^i(s)}{\psi}\,ds,\;\;\; \ \hat{\mathbb{P}}-\text{a.s}.\]
Next, as a consequence of \eqref{sz200}, due to Lemmas \ref{lemma2} and \ref{lemma3} we have
\[\lim_{k\to\infty} I^i_{k,4}=-\frac 12\int_0^\cdot\Vert\sigma(\varrho^i(s))\Vert_{\mathscr{T}_2(K,H)}^2 \varrho^i(s)\,ds,\;\;\; \ \text{in}\ \  L^2(\Omega;L^\infty(0,T;H)).\]
Now, for $i=1, 2$ and $t \in\,[0,T]$, we define
\begin{align}
\label{rev1}M^i(t):=&\scalar{H}{ \varrho^i(t)}{\psi}-\scalar{H}{ \gamma\,u_0}{\psi}+\int_0^t \scalar{H}{ \nabla\varrho^i(s)) }{\nabla \psi}\,ds-\int_0^t  \vert \varrho^i(s)\vert_{H^1}^2\scalar{H}{\varrho(s)}{\psi}\, ds \\[10pt]
&+\frac 12\int_0^t\Vert\sigma(\varrho^i(s))\Vert_{\mathscr{T}_2(K,H)}^2 \varrho^i(s)\,ds.
\end{align}
By proceeding as in the proof of \cite[Lemma 4.9]{DHV2016}, thanks to \eqref{sz201-bis} and the limits above for $I_{k,j}^i$, $j=1,2,3,4$,  we have that for every $t \in\,[0,T]$
\[\le<M^i-\int_0^\cdot \scalar{H}{ \sigma (\varrho^i(s))d\hat{w}^Q(s)}{\psi}    \r>_t=0,\ \ \ \ \ \ \hat{\mathbb{P}}-\text{a.s},\]
where $\langle\cdot\rangle_t$ is the quadratic variation process. 
In particular, this implies that for $i=1, 2$ the martingale $M^i$ coincides with the stochastic integral 
\[\int_0^\cdot \scalar{H}{ \sigma (\varrho^i(s))d\hat{w}^Q(s)}{\psi},\]
and if we replace such stochastic integral in \eqref{rev1} we conclude that $\rho^i$ is a solution of  equation \eqref{limit-eq}, with $w^Q$ replaced by $\hat{w}^Q$.
Hence, since
both $\rho^1$ and $\rho^2$ satisfy the same equation	 \eqref{limit-eq},  as we have explained above the pathwise uniqueness of equation \eqref{limit-eq} allows to conclude that   \eqref{sz-fine} holds.

\vfill\newpage

 \appendix

\section{Ito Lemma}\label{sec-Ito Lemma-final}
In this section we formulate and prove a version of the It\^o Lemma that we use twice throughout the paper. Our approach  follows 
the papers \cite{Brz+Ondr_2007} and \cite{Brz+Zhu_2016}. 

To make this section self-contained let us remind the framework. 
Assume that $H$ is a real separable Hilbert space and that $A_0$ is a non-negative self-adjoint linear operator on $H$. We denote 
\[
\mathscr{H} := D(A_0)\times H,\]
and  endow it with a norm (and the corresponding scalar product) 
\[
\norma{\mathscr{H}}{z}{}^2:= \norma{H}{A_0x}{}^2+\norma{H}{x}{}^2+\norma{H}{y}{}^2, \;\; z=(x,y) \in \mathscr{H}.
\]
Note that $D(A_0)$ is a Hilbert, and hence a Banach, space when endowed with the graph norm.
Moreover, we define  a linear operator $\mathscr{A}$ on $\mathscr{H}$ by \eqref{eqn-mathscr A}, i.e. 
\begin{equation}\label{eqn-mathscr A-app}
\begin{aligned}
D(\mathscr{A})&:=D(A_0^2)\times D(A_0),\ \ \ \ \ \ \ \ 
\mathscr{A} z
:=(v,-A_0^2u), \;\;\; \ z=(u,v)\in D(\mathscr{A}).
\end{aligned}
\end{equation}
 It is well known that $\mathscr{A}$ generates a $C_0$ group (of exponential growth) $\mathscr{S}=(\mathscr{S}(t))_{t\in  \mathbb{R}}$ on $\mathscr{H}$, see e.g. \cite{Brz+Masl+Seidl_2005} and references therein.
Let us point out that it does not matter which equivalent norm on $\mathscr{H}$ we choose.

Next, we introduce the function $\Phi$ about which we will formulate our It\^o Lemma\begin{equation}\label{eqn-}
  \Phi(z)= \frac12 \left( \norma{H}{ A_0x}{}^2 + \norma{H}{y}{}^2 + \delta \norma{H}{x}{}^2 \right)  + \beta \scalar{H}{x}{y}
+F(x)  , \quad z = \left(x,y\right)
\in \mathscr{H},
\end{equation}
for some $F:D(A_0) \to \mathbb{R}$.
The function  $\Phi$ satisfies the following properties.
\begin{lemma}\label{ref-Phi}
Assume that the function 
$F:D(A_0) \to \mathbb{R}$ is of $C^2$-class in the Frech\'et sense. Then, for every $\delta \geq0 $ and $\beta \in \mathbb{R}$  the function 
$\Phi: \mathscr{H}\to\mathbb{R}$ is well defined and of $C^2$-class in the Frech\'et sense and its second derivative is bounded on balls. 
 Moreover, for  every  $z = (x,y), h = (h_1,h_2)$ and $ k =
(k_1, k_2)\in \mathscr{H}$, it holds
\begin{align}\label{ito1}
D\Phi(z)\,h &= \scalar{H}{A_0 x}{A_0 h_1}+ \scalar{H}{ y}{h_2}
+ \delta \scalar{H}{x}{h_1} + \beta \left(\scalar{H}{x}{h_2} + \scalar{H}{y}{h_1} \right)
+ DF (x)\,h_1
\end{align}
and \begin{equation}\label{ito2}
\begin{aligned}
D^2\Phi(z)(h,k) &= 
\scalar{H}{A_0 h_1}{A_0 k_1}+ \scalar{H}{h_2}{k_2}
+\delta \scalar{H}{h_1}{k_1} 
\\[10pt]
&\quad \quad \quad+\beta \left(\scalar{H}{h_2}{k_1} + \scalar{H}{h_1}{k_2} \right)
 + D^2F(x)(h_1,k_1).
\end{aligned}
\end{equation}
\end{lemma}

Assume now that $\tau$ is an accessible stopping time  with  approximating sequence $(\tau_n)_{n=1}^\infty$. 
Moreover, assume that $f=(f(t): t \in [0,\tau))$  is an $H$-valued process and  $g=(g(t): t \in [0,\tau))$
is a $\mathscr{T}_2(K,H)$-valued process, both progressively measurable and such that 
 for every $k \in \mathbb{N}$ and every $t \geq 0$, 
\begin{equation}\label{eqn-g+f}
\mathbb{E} \int_0^{t\wedge \tau_k} \left( \Vert g(s)\Vert^2_{\mathscr{T}_2(K,H)}+\norma{H}{f(s)}{}\right)  \,ds < \infty.
\end{equation}
Next we introduce the  progressively measurable processes $\mathbbm{f}=(\mathbbm{f}(t): t \in [0,\tau))$  and  $\mathbbm{g}=(\mathbbm{g}(t): t \in [0,\tau))$
which take values in $\mathscr{H}$ and   $\mathscr{T}_2(K,\mathscr{H})$, respectively, defined by 
\begin{align}
\mathbbm{f}(t)&=(0,f(t)), \ \ \ \ \ \mathbbm{g}(t)=(0,g(t)), \;\; t \in [0,\tau).
\end{align}

\begin{lemma}\label{lem-Ito-final}
Assume that  an $\mathscr{H}$-valued continuous  local process $z(t)=(x(t),y(t))$, $t \in [0,\tau)$, is a mild solution to \eqref{eq-3.10}, i.e.  for every $k \in \mathbb{N}$, 
\begin{equation}
\label{eq-3.10-app}
z(t\wedge \tau_k)=z_0+  I_{\tau_k}(t\wedge \tau_k)+  \int^t_0 \mathbbm{1}_{\left[0,\tau_k\right)}(r)\mathscr{S}(t-r)\,
\mathbbm{f}(r)\d r, \;\; t\geq 0,
\end{equation}
where $z_0=(x_0,y_0) \in \mathscr{H}$ and the process $I_{\tau_k}=\left( I_{\tau_k}(t): t\geq 0\right)$ is defined by 

\begin{equation}\label{eqn-T_tau_kt} \begin{aligned}
I_{\tau_k}(t) &:= \int^t_0 \mathbbm{1}_{\left[0,\tau_k\right)}(r)
\mathscr{S}(t-r)\mathbbm{g}(r)\d W(r).
\end{aligned}
\end{equation}
Then
\begin{equation}\label{eqn-Ito-final} \begin{aligned}
\Phi(z(t))&=\Phi(z_0)+ \int_0^t \Big( \delta \scalar{H}{x(s)}{y(s)}+ \beta\left( \norma{H}{y(s)}{}^2- \norma{H}{A_0x(s)}{}^2 \right)
\\
 &  \quad \quad \quad \quad +\scalar{H}{\beta x(s)+y(s)}{f(s)}     +DF (x(s))\,y(s) +\frac12 \Vert g(t)e_i \Vert_{\mathscr{T}_2(K,H)}^2
 \Big)\, ds 
 \\
 &\quad \quad \quad \quad \quad \quad \quad \quad+\int_0^t  \scalar{H}{ \beta x(s)+ y(s)}{g(t) \, dW(s)}, \ \ \ \ t \in [0,\tau).  
\end{aligned}
\end{equation}
\end{lemma}
It is important to emphasize that   It\^o's formula \eqref{eqn-Ito-final}  should be understood in the following stopped way. For every $k \in \mathbb{N}$, for every $t \geq 0$, 
\begin{equation}\label{eqn-Ito-strong} \begin{aligned}
\Phi(z(t\wedge \tau_k))&=\Phi(z_0)+ \int_0^{t \wedge \tau_k} \Big( \delta \scalar{H}{x(s)}{y(s)}+ \beta\left( \norma{H}{y(s)}{}^2- \norma{H}{A_0x(s)}{}^2 \right)
\\
 & \quad \quad \quad \quad +  \scalar{H}{\beta x(s)+y(s)}{f(s)}     +DF (x(s))\,y(s)x +\frac12 \Vert g(t)e_i \Vert_{\mathscr{T}_2(K,H)}^2
 \Big)\, ds 
 \\
 &\quad \quad \quad \quad \quad \quad \quad \quad+\int_0^{t \wedge \tau_k}  \scalar{H}{ \beta x(s)+ y(s)}{g(t) \, dW(s)}.
\end{aligned}
\end{equation}

\begin{proof}[Proof of Lemma \ref{lem-Ito-final}]
Assume that $n \in \mathbb{N}$ is big enough so that  $n \in \rho(\mathscr{A})$, i.e.  $(nI+A_0^2)^{-1}$ exists and is bounded. Denote 
\[
\mathscr{R}_n=\left( \begin{array}{cc} (nI+A_0^2)^{-1} & 0 \\ 0 & (nI+A_0^2)^{-1}  \end{array}
 \right).
\]
Since $ (nI+A_0^2)^{-1}$  maps boundedly  $H$ into $D(A_0)$ and   $D(A_0^2)$, the operator $\mathscr{R}_n$ is a bounded linear map in $\mathscr{H}$  and it maps 
boundedly  $\mathscr{H}$ into $D(\mathscr{A})$. Moreover, by direct calculations one can verify that $\mathscr{R}_n$ commutes with $\mathscr{A}$ and therefore, it also commutes with the  group 
$\left(\mathscr{S}(t): t\in \mathbb{R}\right)$. 

From now on we assume that the processes $z$, $f$, $g$ etc are as in the fomulation of the Lemma. We define a set of new processes as follows,  for every $t \in [0,\tau)$,

\begin{equation}
\label{eqn-A.301}
\begin{aligned}
&z_n(t):= \mathscr{R}_n\,z(t), \ \ \ \ \ 
x_n(t):=(nI+A_0^2)^{-1}\,x(t), \ \ \ \ \ 
y_n(t):=(nI+A_0^2)^{-1}\,y(t), \\[10pt]
&f_n(t):=(nI+A_0^2)^{-1}\,f(t), \ \ \ \ \ g_n(t):=(nI+A_0^2)^{-1}\,g(t), \\[10pt]
&\mathbbm{f}_n(t):=\mathscr{R}_n\,\mathbbm{f}_n(t),\ \ \ \ \ 
\mathbbm{g}_n(t):=\mathscr{R}_n\,\mathbbm{g}_n(t).
\end{aligned}
\end{equation}

We note then that for $t \in [0,\tau)$, 
\begin{equation}
\label{eqn-A.302}
\begin{aligned}
&z_n(0):= \mathscr{R}_n\,z_0=(x_n(0),y_n(0)), \ \ \ \ \ 
z_n(t)=(x_n(t),y_n(t)),
\\[10pt]
&\mathbbm{f}_n(t):=(0,f_n(t)),\ \ \ \ \ 
\mathbbm{g}_n(t):=(0,g_n(t)).
\end{aligned}
\end{equation}
We also observe that $z_n$ is an $D(A_0^3)\times D(A_0^2)$-valued continuous  local process, 
$f_n$ is an $D(A_0^2)$-valued  local process and  $g_n$ is an 
is $\mathscr{T}_2(K,D(A_0^2))$-valued process. Both  $f_n$ and $g_n$  are 
  progressively measurable  and 
 for every $k \in \mathbb{N}$ and every $t \geq 0$, 
\begin{equation}\label{eqn-g+f_n}
\mathbb{E} \int_0^{t\wedge \tau_k} \left[ \Vert g_n(s)\Vert^2_{\mathscr{T}_2(K,D(A_0^2))}+\norma{D(A_0^2)}{f_n(s)}{}\right]  \,ds < \infty.
\end{equation}
Moreover, by the commutativity property stated earlier, the process $z_n$ satisfies identities \eqref{eq-3.10-app} in $D(A_0^3)\times D(A_0^2)$ with appropriate and obvious modifications, i.e.  
 for every $k \in \mathbb{N}$, 
\begin{equation}
\label{eq-3.10-app-_n}
z_n(t\wedge \tau_k)=z_n(0)+  I_{n,\tau_k}(t\wedge \tau_k)+  \int^t_0 \mathbbm{1}_{\left[0,\tau_k\right)}(r)\mathscr{S}(t-r)
\mathbbm{f}_n(r)\d r, \;\; t\geq 0,
\end{equation}
where $z_0=(x_0,y_0) \in \mathscr{H}$ and the process $I_{\tau_k}=\left( I_{\tau_k}(t): t\geq 0\right)$ is defined by 

\begin{equation}\label{eqn-T_tau_kt} \begin{aligned}
I_{n,\tau_k}(t) &:= \int^t_0 \mathbbm{1}_{\left[0,\tau_k\right)}(r)
\mathscr{S}(t-r)\,\mathbbm{g}_n(r)\d W(r), \;\; t \geq 0.
\end{aligned}
\end{equation}
 
Moreover, by the Chojnowska-Michalik theorem, see \cite{Chojnowska_1979} or  \cite[Theorem 12]{Ondr_2004}, see also the proof of   \cite[Proposition 6.1]{Brz+Ondr_2007}, 
for all $t \geq 0$,
\begin{equation}
\label{eq-3.10-app-_n}
z_n(t\wedge \tau_k)=z_n(0)+
\int^{t\wedge \tau_k}_0 \left[ \mathscr{A} z_n(s)+ \mathbbm{f}_n(r) \right]\d r+
 \int^{t\wedge \tau_k}_0\mathscr{A} \mathbbm{g}_n(r)\d W(r) , \;\; t\geq 0.
\end{equation}
Next, by the classical strong version of the It\^o Lemma we infer that 
\begin{equation}\label{eqn-Ito-n} 
\begin{aligned}
\Phi(z_n(t))&=\Phi(z_0)+ \int_0^t D\Phi(z_n(s))(\mathscr{A} z_n(s)+ \mathbbm{f}_n(r) )\, dr + \frac12 \int_0^t \tr_{K} [D^2\Phi(z_n(s))(\circ \mathbbm{g}(s),\circ \mathbbm{g}(s))]\, ds
 \\
 &\quad \quad \quad \quad \quad \quad\quad \quad \quad \quad+\int_0^t D\Phi(z_n(s) )\circ \mathbbm{g}_n(s)  \, dW(s). 
\end{aligned}
\end{equation}
Applying Lemma \ref{ref-Phi} as well as one of the identities in \eqref{eqn-A.302},  we infer that  for $s \in [0,\tau)$

\begin{equation} \label{eqn-bms2.6}
\begin{aligned}
D\Phi(z_n(s))(\mathscr{A}z_n(s)&+\mathbbm{f}_n(s))=
\delta \scalar{H}{x_n(s)}{y_n(s)}+ \beta\left( \norma{H}{y_n(s)}{}^2- \norma{H}{A_0x_n(s)}{}^2 \right)
\\[10pt]
&
 \quad \quad  + DF (x_n(s))\,y_n(s)
\scalar{H}{ y}{f_n(s)}
+\beta \scalar{H}{x_n(s)}{f_n(s)}   
\end{aligned}
\end{equation}
and, with   $\{e_i\}_{i\in I}$ being  an  arbitrary orthonormal basis  in $K$, 
\begin{align}
&\tr_{K} D^2\Phi(z_n(s))(\circ \mathbbm{g}_n(s), \circ \mathbbm{g}_n(s))
= \sum_{ i \in I} \scalar{H}{g_n(s)e_i}{g_n(s)e_i}
\\[10pt]
&\quad \quad \quad \quad \quad =\sum_{ i \in I}  \norma{H}{g_n(s)e_i}{}^2= \Vert g_n(s)e_i \Vert_{\mathscr{T}_2(K,H)}^2.
\end{align}
Therefore, we deduce that $\Phi(z_n(t))$ satisfies the desired identity \eqref{eqn-Ito-strong}, i.e.  for every $ t\geq 0$, 
\begin{equation}\label{eqn-Ito-strong-_n} \begin{aligned}
\Phi(z_n(t\wedge \tau_k))&=\Phi(z_n(0))+ \int_0^{t \wedge \tau_k} \Big( \delta \scalar{H}{x_n(s)}{y_n(s)}+ \beta\left( \norma{H}{y_n(s)}{}^2- \norma{H}{A_0x_n(s)}{}^2 \right)
\\
 & +\scalar{H}{\beta x_n(s)+y_n(s)}{f_n(s)}     +DF (x_n(s))\,y_n(s) +\frac12 \Vert g_n(s) \Vert_{\mathscr{T}_2(K,H)}^2
 \Big)\, ds 
 \\
 &\quad \quad \quad +\int_0^{t \wedge \tau_k}  \scalar{H}{ \beta x_n(s)+ y_n(s)}{g_n(s) \, dW(s)}.
\end{aligned}
\end{equation}

Observe that  $\mathbb{P}$- a.s. for every compact interval $[0,T] \subset [0,\tau)$, 
the following convergences are satisfied uniformly on $[0,T]$ 
\begin{align*}
&z_n(t) \to z(t) \mbox{ in } \mathscr{H}, \ \ \ x_n(t) \to x(t) \mbox{ in } D(A_0), \ \ \ 
y_n(t) \to y(t) \mbox{ in } H, 
\end{align*}
and, for every $t \geq 0$, 
\begin{equation}\label{eqn-g+f-limit}
\mathbb{E} \int_0^{t\wedge \tau_k} \left[ \Vert g_n(s)-g(s)\Vert^2_{\mathscr{T}_2(K,D(A_0^2))}+\norma{D(A_0^2)}{f_n(s)-f(s)}{}\right]  \,ds < \infty.
\end{equation}
Thus, we conclude the proof of \eqref{eqn-Ito-strong} by taking the limit as $n \to \infty$ of 
equalities \eqref{eqn-Ito-strong-_n}. Compare with the proofs of  \cite[Lemma 5.2]{Brz+Zhu_2016} and/or \cite[Proposition 6.1]{Brz+Ondr_2007}.

\end{proof}

\end{document}